\newtheorem{theorem}{Theorem}[section]
\newtheorem{lemma}[theorem]{Lemma}
\newtheorem{proposition}[theorem]{Proposition}
\newtheorem{assumption}[theorem]{Assumption}
\theoremstyle{definition}
\theoremstyle{remark}
\newtheorem{remark}[theorem]{Remark}
\numberwithin{equation}{section}
\begin{document}
\title{Ellipticity of Bartnik boundary data for stationary vacuum spacetimes}
\author{Zhongshan An}
\begin{abstract}
We establish a moduli space $\mathbb E$ of stationary vacuum metrics in a spacetime, and set up a well-defined boundary map $\Pi$ in $\mathbb E$, assigning a metric class with its Bartnik boundary data. Furthermore, we prove the boundary map $\Pi$ is Fredholm by showing that the stationary vacuum equations (combined with proper gauge terms) and the Bartnik boundary conditions form an elliptic boundary value problem. As an application, we show that the Bartnik boundary data near the standard flat boundary data admits a unique (up to diffeomorphism) stationary vacuum extension locally.
\end{abstract}
\maketitle

\section{Introduction}

In general relativity, one of the most interesting and well-known notions of quasi-local mass is the Bartnik quasi-local mass. Let $\Omega$ be a bounded smooth 3-manifold with nonempty boundary $\Sigma$. Equip $\Omega$ with a Riemannian metric $g$ and a symmetric 2-tensor $K$, which is essentially the second fundamental form of $\Omega$ when it is embedded in some spacetime. The Bartnik quasi-local mass of the data set $(\Omega,g,K)$ is defined as (cf.[B1],[B2]),
\begin{equation}
m_B[(\Omega,g,K)]=\text{inf}\{m_{ADM}[(M,g,K)]\},
\end{equation}
where the infimum is taken over all asymptotically flat admissible initial data sets $(M,g,K)$ such that after gluing $M$ and $\Omega$ along the boundary $\partial M\cong\partial \Omega$, the manifold $M\cup\Omega$ forms a complete asymptotically flat spacetime initial data set. 

By analyzing the constraint equations across the boundary $\Sigma=\partial\Omega\cong\partial M$, Bartnik proposed a set of geometric boundary data for $(\Omega,g,K)$ given by,
\begin{equation}
(g_{\Sigma}, H_{\Sigma}, tr_{\Sigma}K,\omega_{\mathbf n_{\Sigma}}).
\end{equation}
Here $g_{\Sigma}$ is the induced metric on the boundary $\Sigma$ obtained from $(\Omega, g)$; $H_{\Sigma}$ is the mean curvature of $\Sigma\subset (\Omega,g)$; $tr_{\Sigma}K$ is the trace of the restriction $K|_{\Sigma}$ of the second fundamental form; and $\omega_{\mathbf n_{\Sigma}}$ is the connection 1-form of the spacetime normal bundle of $\Sigma$, which is defined as,
$$\omega_{\mathbf n_{\Sigma}}(v)=K(\mathbf n_{\Sigma},v),~\forall v\in T\Sigma,$$
where $\mathbf n_{\Sigma}$ is the outward unit normal vector field on $\Sigma\subset (\Omega,g)$.

Then definition (1.1) of the Bartnik quasi-local mass can be reduced to the infimum ADM mass taken over all asymptotically flat admissible initial data sets $(M,g,K)$ which satisfy the following boundary conditions:
\begin{equation}
\begin{cases}
g_{\partial M}=g_{\Sigma}\\
H_{\partial M}=H_{\Sigma}\\
tr_{\partial M}K=tr_{\Sigma}K\\
\omega_{\mathbf n_{\Sigma}}=\omega_{\mathbf n_{\partial M}}.
\end{cases}
\end{equation}

The tuple of geometric boundary data (1.2) is called the Bartnik boundary data. It also arises naturally from a Hamiltonian analysis of the vacuum Einstein equations. In fact, a regularization $\mathcal H$ of the Regge-Teitelboim Hamiltonian is constructed in [B3]. When the spacetime has empty boundary, by analyzing the functional $\mathcal H$ and following an approach initiated by Brill-Deser-Fadeev (cf.[BDF]), Bartnik proved that stationary metrics are critical points of the ADM energy functional on the constraint manifold. However, if the spacetime has non-empty boundary, boundary terms arise from the variation of $\mathcal H$; they were explicitly identified by Bartnik in [B1], and the boundary terms vanish if and only if the Bartnik boundary data (1.2) is preserved in the variation. 

It was conjectured in [B1] that the Bartnik quasi-local mass of a given data set $(\Omega,g,K)$ must be realized by an admissible extension $(M,g,K)$ which can be embedded as an initial data set into a stationary vacuum spacetime. To solve this conjecture, one of the well-known and fundamental open problems raised by Bartnik in [B1] is the following:
\begin{equation}
\textit{Is the Bartnik boundary data elliptic for stationary vacuum metrics?}
\end{equation}
In this paper, we give a positive answer to this question. 

A stationary spacetime $(V^{(4)},g^{(4)})$ is a 4-manifold $V^{(4)}$ with a smooth Lorentzian metric $g^{(4)}$ of signature $(-,+,+,+)$, which admits a time-like Killing vector field. In addition, a stationary spacetime is called $vacuum$ if it solves the vacuum Einstein equation
\begin{equation}
Ric_{g^{(4)}}=0.
\end{equation}

Throughout this paper, we assume that the stationary spacetime $(V^{(4)},g^{(4)})$ is \textit{globally hyperbolic}, i.e. it admits a Cauchy surface $M$ and $V^{(4)}\cong\mathbb{R}\times M$. In this case, there exists a global time function $\tau$ on $V^{(4)}$ so that $M=\{\tau=0\}$ and every surface of constant $\tau$ is a Cauchy surface. Since the spacetime is stationary, one can choose local coordinates $\{\tau, x^i\}~(i=1,2,3)$ so that ${\partial_{\tau}}$ is the time-like Killing vector field. Then the metric $g^{(4)}$ can be written globally in the form
\begin{equation}
g^{(4)}=-N^2d\tau^2+g_{ij}(dx^i+X^id\tau)(dx^j+X^jd\tau).
\end{equation}
Notice that since $\partial_{\tau}$ is a Killing vector field, the stationary spacetime $(V^{(4)},g^{(4)})$ is vacuum if and only if the equation (1.5) holds on $M$. 

\textit{Remark} In the expression of $g^{(4)}$ above, the scalar field $N$ and the vector field $X$ in $V^{(4)}$ are usually called the \textit{lapse function} and the \textit{shift vector} of $g^{(4)}$ in this 3+1 formalism of the spacetime. The tensor field $g$ is the induced (Riemannian) metric on the Cauchy surfaces $\{\tau=\text{constant}\}\subset (V^{(4)},g^{(4)})$. Since the spacetime is stationary, the fields $g, X$ and $N$ are all independent of the time variable $\tau$, so they can be regarded as tensor fields on the hypersurface $M$. Consequently, the vacuum equation (1.5) is an elliptic system (modulo gauge) of the fields $(g,X,N)$ on $M$.

Let $K$ be the second fundamental form of $M\subset (V^{(4)},g^{(4)})$. The triple $(M,g,K)$ is called an $initial$ $data$ $set$ of the spacetime. In the case where the boundary $\partial M$ is nonempty, we can impose the Bartnik boundary condition (1.3) on this data set, coupling with the vacuum equation (1.5). So we obtain a boundary value problem (BVP) as,
\begin{equation}
\begin{split}
&Ric_{g^{(4)}}=0\quad\text{on }M,\\
&\begin{cases}
g_{\partial M}=\gamma\\
H_{\partial M}=H\\
tr_{\partial M}K=k\\
\omega_{\mathbf n_{\partial M}}=\tau
\end{cases}\quad\text{on }\partial M,
\end{split}
\end{equation}
where $\gamma,H,k$, and $\tau$ are prescribed tensor fields on $\partial M$. Now, the ellipticity question (1.4) is essentially asking whether this BVP is elliptic.

Another way to formulate question (1.4) is to establish a boundary map. Let  $\mathbf B(\partial M)$ denote the space of Bartnik boundary data, i.e. space of tuples $(\gamma,H,k,\tau)$ on $\partial M$. Let $\mathcal E$ be the space of stationary vacuum metrics on $V^{(4)}$. Then a natural boundary map $\Pi_1$ arises as,
\begin{equation}
\begin{split}
\Pi_1:\mathcal E&\rightarrow\mathbf B(\partial M),\\
\Pi_1(g^{(4)})&=(g_{\partial M},H_{\partial M}, tr_{\partial M} K,\omega_{\mathbf n}).
\end{split}
\end{equation}
 The map $\Pi_1$ being Fredholm is essentially equivalent to that BVP (1.7) is elliptic. 
 
 However, it is easy to observe that equation (1.5) is not elliptic, since it is invariant under diffeomorphisms, i.e., if $g^{(4)}$ is a stationary metric that solves (1.5), then the pull back metric $\Phi^*g^{(4)}$ of $g^{(4)}$ under an arbitrary time-independent diffeomorphism $\Phi$ of $V^{(4)}$ gives another stationary vacuum solution. This means that we need to add gauge terms to the BVP (1.7), and at the same time modify the domain space $\mathcal E$ in (1.8) to a moduli space.

In this paper, we first analyze how to choose the right domain space for the boundary map to be well-defined. We conclude in \S 2 that the Bartnik boundary map should be established as,
\begin{equation*}
\begin{split}
\Pi:\mathbb E&\rightarrow\mathbf B(\partial M),\\
\Pi([g^{(4)}])&=(g_{\partial M},H_{\partial M}, tr_{\partial M} K,\omega_{\mathbf n}).
\end{split}
\end{equation*}
Here the moduli space $\mathbb E$ is the quotient of $\mathcal E$ by a particular diffeomorphism group $\mathcal D$. We refer to \S 2, cf.(2.14), for the exact definition of $\mathcal D$; roughly it is a natural intermediate group $\mathcal D_3\subset\mathcal D\subset\mathcal D_4$ between the groups of 3-dimensional diffeomorphisms on $M$ fixing the boundary $\partial M$ and 4-dimensional time-independent diffeomorphisms on $V^{(4)}$ fixing $\partial V^{(4)}$. In order to prove ellipticity of the map $\Pi$, we establish in \S3 an associated BVP under a particular technical assumption (cf. Assumption 3.1). 
We prove this BVP is elliptic in \S 4, and from this derive the main theorem of this paper:
\begin{theorem}
The moduli space $\mathbb E$ is a $C^{\infty}$ smooth Banach manifold of infinite dimension and the boundary map $\Pi$ is Fredholm.
\end{theorem}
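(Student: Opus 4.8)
The plan is to prove the two assertions in tandem, since both rest on the same elliptic-regularity machinery applied to the BVP constructed in \S3. The strategy is the standard implicit-function-theorem / elliptic boundary value problem argument adapted to the moduli setting.

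\emph{Step 1: The moduli space $\mathbb E$ is a Banach manifold.} First I would fix appropriate Banach (or H\"older) function spaces, say $C^{k,\alpha}$, for the fields $(g,X,N)$ on $M$. The group $\mathcal D$ with $\mathcal D_3\subset\mathcal D\subset\mathcal D_4$ acts on $\mathcal E$; to show the quotient is a smooth Banach manifold one wants the action to be free and proper and the orbits to be smooth submanifolds. The key local statement is a \emph{slice theorem}: near any stationary vacuum metric $g^{(4)}$ one can find a gauge-fixing condition (a local section transverse to the $\mathcal D$-orbit) so that the gauge-fixed stationary vacuum equations — equation (1.5) augmented by the gauge terms introduced in \S3 — together with the requirement that the linearized orbit map is an isomorphism onto a complement, cut out a smooth submanifold of the ambient space of fields. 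Concretely, one linearizes the $\mathcal D$-action at $g^{(4)}$, identifies the tangent space to the orbit with the image of a first-order operator (a Lie-derivative / Killing-type operator), and chooses the gauge so that the linearized gauged equation has that image as a complement to its kernel; ellipticity of the gauged operator (Step 2) guarantees closed range and finite-dimensional cokernel, which makes the slice construction go through. Charts on $\mathbb E$ are then given by these slices, and the transition maps are smooth because the $\mathcal D$-action is smooth on the chosen function spaces.

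\emph{Step 2: The gauged BVP is elliptic (granted from \S4).} By the statement of the paper, \S4 establishes that the gauged stationary vacuum system of \S3, together with the Bartnik boundary conditions (1.3), forms an elliptic boundary value problem in the sense of Agmon--Douglis--Nirenberg / Lopatinski--Shapiro. I would invoke this directly. Its consequence is that the linearized operator $L=D\Phi$ of the gauged boundary map, acting between the appropriate $C^{k,\alpha}$ spaces, is a Fredholm operator: it has finite-dimensional kernel and cokernel and closed range, and one has elliptic a priori estimates of the form $\|u\|_{k+2,\alpha}\le C(\|Lu\|_{k,\alpha}+\|u\|_{0,\alpha})$ with the boundary data included on the right.

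\emph{Step 3: $\Pi$ is Fredholm.} On a slice chart from Step 1, the boundary map $\Pi$ is represented by the composition of the inclusion of the slice into the ambient field space with the boundary-restriction map $(g,X,N)\mapsto(g_{\partial M},H_{\partial M},tr_{\partial M}K,\omega_{\mathbf n})$. Its differential at $[g^{(4)}]$ is exactly the map whose Fredholm property is the content of Step 2, once one accounts for the gauge directions quotiented out in forming $\mathbb E$ and the gauge terms added to the equation (these are arranged in \S3 precisely so that solutions of the gauged system with the given boundary data are, modulo $\mathcal D$, solutions of the original system (1.7) — an argument that the added gauge terms vanish for genuine solutions, typically via a maximum principle or unique-continuation argument applied to the gauge "de Turck" vector field with its boundary conditions). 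Hence $d\Pi$ is Fredholm at every point; since the index is locally constant and the domain is connected-by-construction on each component, $\Pi$ is a Fredholm map. Infinite-dimensionality of $\mathbb E$ is clear because the target $\mathbf B(\partial M)$ and the space of gauge-fixed fields are both infinite-dimensional while the kernel/cokernel of $d\Pi$ are finite-dimensional.

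\emph{Main obstacle.} The routine part is the elliptic estimate and the Fredholm bookkeeping; the delicate part is Step 1 together with the compatibility of the gauge fixing with the \emph{boundary} conditions. One must choose the gauge (the precise intermediate group $\mathcal D$ and the associated de Turck-type term) so that (i) the gauged interior equations are elliptic, (ii) the Bartnik boundary conditions (1.3) together with the boundary values of the gauge field satisfy the Lopatinski--Shapiro complementing condition — this is exactly where the special structure of the Bartnik data enters and where Assumption 3.1 is needed — and (iii) the gauge can actually be attained on the chosen function spaces near a given metric, i.e. the relevant gauge-fixing operator is itself invertible with the appropriate boundary behavior. Reconciling these three requirements simultaneously, and showing the leftover gauge field vanishes so that the gauged problem is genuinely equivalent to (1.7) modulo $\mathcal D$, is the technical heart of the argument; everything else follows from standard elliptic theory.
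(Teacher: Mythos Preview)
Your outline captures the broad architecture (gauge-fix, prove ellipticity, deduce Fredholm), but it misses the central construction that makes the argument work, and as written Step~1 would not go through.

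The issue is the mismatch between the natural gauge and the group $\mathcal D$. The Bianchi gauge $\beta_{\tilde g^{(4)}}g^{(4)}=0$ gives a slice for the action of $\mathcal D_4$, not $\mathcal D$. If you try to run the standard slice argument with this gauge and the unmodified Bartnik boundary data, you arrive at BVP~(2.4), which is overdetermined: ten interior equations but eleven boundary conditions. Conversely, there is no obvious gauge condition that slices exactly the $\mathcal D$-action. The paper does \emph{not} resolve this by finding a better slice for $\mathcal D$. Instead it introduces an auxiliary scalar unknown $F$ on $M$, subject to $\Delta F=0$, and replaces the Bartnik boundary data by the twisted data $(g_{\partial M},\,aH_{\partial M}+b\,tr_{\partial M}K,\,a\,tr_{\partial M}K+bH_{\partial M},\,\omega_{\mathbf n}+a^2d_{\partial M}(b/a))$, where $a,b$ are built from $F|_{\partial M}$ via the transformation formulas (2.8)--(2.11). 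The resulting space $\mathcal C$ of pairs $(g^{(4)},F)$ is then shown (Theorem~3.2) to be locally diffeomorphic to $\mathbb E$: the Bianchi-gauged metric represents a class in $\mathcal E/\mathcal D_4$, and the harmonic function $F$ records the extra degree of freedom $\mathbf n(f)|_{\partial M}$ that distinguishes $\mathcal D$-orbits from $\mathcal D_4$-orbits. The Banach-manifold structure on $\mathbb E$ comes from this identification together with the fiber-bundle description $\mathcal C\cong(\mathcal E/\mathcal D_4)\times\mathcal H_{\tilde g^{(4)}}$ (Theorem~3.3), not from a direct slice theorem for $\mathcal D$.

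A second point: you locate Assumption~3.1 in the Lopatinski--Shapiro verification, but in the paper it plays a different role. Assumption~3.1 (invertibility of $\beta_{\tilde g^{(4)}}\delta^*_{\tilde g^{(4)}}$ with Dirichlet data) is what guarantees, via the implicit function theorem, that every metric near $\tilde g^{(4)}$ can be brought into Bianchi gauge by an element of $\mathcal D_4$, and that solutions of the gauged equation are genuinely vacuum. The complementing condition in \S4 is checked by a direct symbol computation that does not invoke Assumption~3.1. When Assumption~3.1 fails, the paper does not change the boundary analysis; it perturbs the reference metric (\S5) to restore invertibility of the gauge operator.
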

We show in \S 5 that the theorem is still true without the technical assumption in \S 3, completing the proof of Theorem 1.1.

 To conclude, we apply this ellipticity result in \S6 to show that the Bartnik boundary data near the standard flat (Minkowski) metric $\tilde g^{(4)}_0$ on $\mathbb R\times(\mathbb R^3\setminus B^3)$ can be locally uniquely realized by a stationary vacuum metric up to diffeomorphisms in $\mathcal D$.

\begin{theorem}
There is a neighborhood $\mathcal U\subset\mathbf B(S^2)$ of the standard flat boundary data $(g_0,2,0,0)$ such that for any $(\gamma,H,k,\tau)\in\mathcal U$, there is a unique stationary vacuum metric $g^{(4)}\in\mathcal E$ near $\tilde g^{(4)}_0$ up to diffeomorphisms in $\mathcal D$, for which 
$$\Pi_1(g^{(4)})=(\gamma,H,k.\tau).$$
\end{theorem}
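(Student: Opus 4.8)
The plan is to deduce this local existence and uniqueness result from the Fredholm property of $\Pi$ established in Theorem 1.1, together with an explicit computation of the linearization of $\Pi$ at the flat metric. First I would verify that the standard flat (Minkowski) spacetime $\tilde g^{(4)}_0$ on $\mathbb R\times(\mathbb R^3\setminus B^3)$ is a stationary vacuum metric whose Bartnik boundary data on $S^2=\partial(\mathbb R^3\setminus B^3)$ is exactly $(g_0,2,0,0)$: the induced metric on the unit sphere is the round metric $g_0$, the mean curvature of the unit sphere in $\mathbb R^3$ is $2$, the second fundamental form $K$ of the $t=0$ slice vanishes since the slice is totally geodesic, and the normal connection one-form $\omega_{\mathbf n}$ vanishes as well. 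Thus $[\tilde g^{(4)}_0]\in\mathbb E$ and $\Pi([\tilde g^{(4)}_0])=(g_0,2,0,0)$.

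The heart of the argument is to show that the linearization $D\Pi$ at $[\tilde g^{(4)}_0]$ is an isomorphism from the tangent space $T_{[\tilde g^{(4)}_0]}\mathbb E$ onto $T_{(g_0,2,0,0)}\mathbf B(S^2)$. Since $\Pi$ is Fredholm by Theorem 1.1, $D\Pi$ is Fredholm of some index, so it suffices to prove that $D\Pi$ is injective and that its cokernel is trivial; equivalently, that the associated linearized elliptic boundary value problem from \S3--\S4 has trivial kernel and that the formal adjoint boundary problem also has trivial kernel (or, more directly, that the linearized problem is uniquely solvable for every right-hand side). Concretely, I would linearize the stationary vacuum system (with the gauge terms of \S3) together with the four Bartnik boundary conditions at the flat solution, obtaining a linear elliptic BVP for a perturbation $(h,\dot X,\dot N)$ of the fields $(g,X,N)$ on $\mathbb R^3\setminus B^3$; a kernel element is a solution decaying at infinity with vanishing linearized boundary data $(\dot\gamma,\dot H,\dot k,\dot\tau)=0$. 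Using the flat background one can integrate by parts: the linearized constraint/vacuum operator at flat space is, up to gauge, the standard Laplace-type operator, and a Green's-identity or Bochner-type argument — together with the vanishing boundary terms coming precisely from the Bartnik data being zero (this is the Hamiltonian boundary-term identity recalled in the introduction) and the decay at infinity — forces the perturbation to be pure gauge, hence zero in the moduli space $\mathbb E$. The same integration-by-parts structure, applied to the formal adjoint, yields surjectivity.

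Granting that $D\Pi$ is an isomorphism at $[\tilde g^{(4)}_0]$, the conclusion follows from the inverse function theorem on Banach manifolds: since $\mathbb E$ is a smooth Banach manifold and $\Pi$ is a smooth map whose differential at $[\tilde g^{(4)}_0]$ is a linear isomorphism onto $T\mathbf B(S^2)$, there are neighborhoods $\mathcal V$ of $[\tilde g^{(4)}_0]$ in $\mathbb E$ and $\mathcal U$ of $(g_0,2,0,0)$ in $\mathbf B(S^2)$ such that $\Pi:\mathcal V\to\mathcal U$ is a diffeomorphism. In particular, for every $(\gamma,H,k,\tau)\in\mathcal U$ there is a unique class $[g^{(4)}]\in\mathcal V$ with $\Pi([g^{(4)}])=(\gamma,H,k,\tau)$; unwinding the definition of $\mathbb E$ as $\mathcal E/\mathcal D$ and recalling that $\Pi_1=\Pi\circ(\text{quotient})$, this says precisely that there is a stationary vacuum metric $g^{(4)}\in\mathcal E$ near $\tilde g^{(4)}_0$, unique up to diffeomorphisms in $\mathcal D$, with $\Pi_1(g^{(4)})=(\gamma,H,k,\tau)$.

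I expect the main obstacle to be the explicit verification that $D\Pi$ has trivial kernel and cokernel at the flat solution — that is, showing the linearized elliptic BVP at Minkowski space is actually uniquely solvable. This requires carefully tracking the gauge terms introduced in \S3, confirming that the boundary terms in the relevant Green identity are controlled exactly by the linearized Bartnik data (so that they vanish on kernel elements), and handling the behavior at the asymptotically flat end $\mathbb R^3\setminus B^3$ with appropriate weighted function spaces so that the integration by parts is justified and decaying harmonic-type solutions are forced to vanish. The separation between genuine moduli directions and pure-gauge directions must be done with care, since a nonzero solution of the linearized system that is pure gauge represents the zero tangent vector in $\mathbb E$ and therefore does not obstruct injectivity of $D\Pi$.
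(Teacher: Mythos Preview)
Your overall architecture is right: verify $\Pi([\tilde g^{(4)}_0])=(g_0,2,0,0)$, prove $D\Pi$ is an isomorphism at the flat metric, and invoke the inverse function theorem. The kernel argument you sketch is also close to what the paper does in Theorem~6.1: one decomposes the deformation $h^{(4)}$ into $(h,Y,v)$ (metric, shift, lapse) together with the auxiliary harmonic function $G$, writes out the linearized vacuum and gauge equations at flat space, and uses integration by parts together with the vanishing linearized Bartnik data to force everything to zero. Concretely, the paper quotes a result from [A2] to dispose of the static piece $(h,v)$, and then pairs $\delta_{g_0}\delta^*_{g_0}Y=0$ against $Y$ and uses the boundary conditions on $tr_{\partial M}K$ and $\omega_{\mathbf n}$ to show $\delta^*_{g_0}Y=0$, hence $Y=0$ and then $G=0$.

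The genuine gap is your surjectivity step. You assert that ``the same integration-by-parts structure, applied to the formal adjoint, yields surjectivity,'' but the Bartnik boundary conditions are not self-adjoint for the linearized operator $D\mathcal L$, so the adjoint BVP carries \emph{different} boundary conditions and there is no reason the same identity closes up. The paper does not attempt a direct cokernel computation. Instead it shows the Fredholm index is zero by an index-homotopy argument (Lemmas~6.2--6.3): one constructs an explicit one-parameter family $D\mathcal B_t$, $t\in[0,1]$, of boundary operators connecting the linearized Bartnik/gauge boundary operator $D\mathcal B_0=D\mathcal B$ to a manifestly self-adjoint boundary operator $D\mathcal B_1=D\mathcal{\tilde B}$ (a mixture of Dirichlet and normal-derivative conditions on the components of $h^{(4)}$ and $G$), checks by a direct computation of the boundary principal symbol that the complementing condition holds for every $t\in[0,1]$, and concludes by homotopy invariance of the index that the original problem has index $0$. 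Trivial kernel plus index zero then gives the isomorphism. This index-deformation step is the ingredient your proposal is missing.
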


Throughout, we assume the hypersurface $M\cong\mathbb R^3\setminus B^3$ (exterior problem), together with certain asymptotically flat conditions on the metric $g^{(4)}$. Meanwhile, all the methods and results here can be applied equally well in the case where $M\cong B^3$ (interior problem).

This paper is a continuation of our previous paper [Az], in which we developed a general method to prove the ellipticity of boundary value problems for the stationary vacuum spacetime. Expanding this method, we study the ellipticity of the Bartnik boundary data here. Theorem 1.1 is a generalization of the results proved in [AK], where spacetimes are static. Theorem 1.2 generalizes the result in [A2] for static metrics. We refer to [Mi],[J],[R] for other existence results on stationary vacuum extensions of boundary data.

The results we prove in this paper provide a firm foundation for future work on Bartnik's conjecture about the quasi-local mass in spacetimes and the existence problem of stationary vacuum metrics that satisfy the Bartnik boundary conditions.  To the author's knowledge, this is the first ellipticity result of the Bartnik boundary data for general stationary vacuum metrics.\\
\\
\textbf{Acknowledgements}
I would like to express great thanks to my advisor Michael Anderson for suggesting this problem and for valuable discussions and comments.
%%%%%%%%%%%%%%%%%%%%%%%%%%%%%%%%%%%%%%%%%%%%%%%%%%%%%%%%%%%%%%%%%%%%%%%%%%%%%%%%%%%%%%%%%%%%%%%%%%%%%%%%%%%%%%%%%%%%%%%%%%%%%%%%%%%%%%%%%%%%%%%%%%%%%%%%%%%%%%%%%%%%%%%%%
\section{background}
Fix a 3-dimensional manifold $M\cong\mathbb R^3\setminus B^3$. Let $V^{(4)}\cong\mathbb R\times M$. By fixing a diffeomorphism between $V^{(4)}$ and $\mathbb R\times M$, we can equip $V^{(4)}$ with the natural coordinates $\{(t,p)\}$ where $t\in\mathbb R$ and $p\in M$. Fix the hypersurface $\{t=0\}\subset V^{(4)}$ and identify it with $M$. 
Let $\mathcal S$ denote the space of Lorenztian metrics $g^{(4)}$ on $V^{(4)}$ which satisfy the following conditions:\\
\\
\textit{1. (globally hyperbolic) The metric $g^{(4)}$ can be expressed globally as
\begin{equation}
g^{(4)}=-N^2dt^2+g_{ij}(dx^i+X^idt)(dx^j+X^jdt),
\end{equation}
where $\{x^i\}(i=1,2,3)$ are local coordinates on $M$. There exists some time independent function $f\in C^{m,\alpha}_{\delta}(V^{(4)})$ (cf.\S7.1) so that the transformation $\tau=t+f$ makes $\tau$ a time function of the spacetime $(V^{(4)},g^{(4)})$ in the sense of general relativity and every surface of constant $\tau$ is a Cauchy surface (cf. equation (1.6)). In particular, if $f\equiv0$ then expression (2.1) and (1.6) agree. \\
2. (stationary) The vector field $\partial_t$ is a time-like Killing vector field in $(V^{(4)},g^{(4)})$. So the triple $(g,X,N)$ is independent of $t$ and can be regarded as tensor fields on $M$ (cf. the remark below equation (1.6)). In addition, since $\langle\partial_t,\partial_t\rangle_{g^{(4)}}=-N^2+||X||_{g}^2<0$, one has \begin{equation}N^2>||X||_{g}^2.\end{equation}
3. (asymptotically flat) The metric $g^{(4)}$ decays to the flat (Minkowski) metric at infinity. Explicitly, $N$, $X$ and $g$ belong to the weighted H\"older spaces on $M$, given by,
\begin{equation}
\begin{split}
&g\in Met^{m,\alpha}_{\delta}(M),\\
&N-1\in C^{m,\alpha}_{\delta}(M),\\
&X\in T^{m,\alpha}_{\delta}(M),
\end{split}
\end{equation}
for some fixed number $m\geq 2,~ 0<\alpha<1,$ and $\frac{1}{2}<\delta<1$. We refer to the Appendix \S 7.1 for the precise definition of the weighted H\"older spaces.\\
 }$~~$
 
 Throughout this paper, we will use $\langle X,Y \rangle_{g}$ to denote the inner product of two vector fields with respect to the metric $g$. The (square) norm of a vector field $X$ with respect to the metric $g$ is $\langle X,X\rangle_{g}=||X||^2_{g}$. We will omit the metric in the subcript when it is clear in the context what metric is being used. 
 \begin{remark} Based on the definition of the space $\mathcal S$, it is easy to observe that $\mathcal S$ is invariance under the action of the diffeomorphism group $\mathcal D_4$ (cf.(2.6) below). Consequently, the tensor field $g$ in (2.1), which can be taken as the induced metric on the hypersurface $M\subset (V^{(4)},g^{(4)})$, is not necessarily Riemannian.
 \end{remark}
It is obvious that an element in $\mathcal S$ is uniquely determined by a triple of fields $(g,X,N)$ on $M$. Thus $\mathcal S$ is an open domain in a Banach space and so admits smooth Banach manifold structure.

As is mentioned in the introduction, one can establish BVP (1.7) for $g^{(4)}\in\mathcal S$, but in order to make it elliptic, we need to add gauge terms. A standard choice is to use the Bianchi gauge, leading to a modified system with unknown $g^{(4)}\in\mathcal S$ as follows:
 \begin{equation}
	\begin{split}
		&Ric_{g^{(4)}}+\delta^*_{g^{(4)}}\beta_{\tilde g^{(4)}}g^{(4)}=0
		\quad\text{on}\quad M,\\
		&\begin{cases}
			g_{\partial M}=\gamma\\
			H_{\partial M}=H\\
			tr_{\partial M}K=k\\
			\omega_{\mathbf n}=\tau\\
			\beta_{\tilde g^{(4)}}g^{(4)}=0.
		\end{cases}\quad\text{on}\quad\partial M
	\end{split}
\end{equation}
In the system above, we add the term $\delta^*_{g^{(4)}}\beta_{\tilde g^{(4)}}g^{(4)}$ in the vacuum equation, and add the Dirichlet condition of the gauge term $\beta_{\tilde g^{(4)}}g^{(4)}$ on the boundary.  Here the gauge term $\beta_{\tilde g^{(4)}}g^{(4)}$ is the Bianchi operator acting on the metric $g^{(4)}$ with respect to a fixed stationary vacuum metric $\tilde g^{(4)}$, i.e. $\beta_{\tilde g^{(4)}}g^{(4)}=\delta_{\tilde g^{(4)}}g^{(4)}+\frac{1}{2}dtr_{\tilde g^{(4)}}g^{(4)}$, where the reference metric $\tilde g^{(4)}\in\mathcal E$ (cf.(2.5) below).  We use $\delta$ to denote the divergence operator $\delta =-tr\nabla $, and $\delta^*$ denotes the formal adjoint of the divergence operator, i.e. $\delta^*_{g^{(4)}}Y=\frac{1}{2}L_{Y}g^{(4)}$ for any $Y\in TV^{(4)}$. Among the Bartnik boundary conditions, here and throughout the following, we use $\omega_{\mathbf n}$ as the abbreviation of $\omega_{\mathbf n_{\partial M}}$.

The effect of adding the gauge term to BVP (1.7) as above is to give a slice to the action on the solution space of (1.7) by the group $\mathcal D_4$ (cf.(2.6) below) of diffeomorphisms of the spacetime fixing the boundary $\partial V^{(4)}$. However, such a modification has two issues. First, it is easy to observe that $(2.4)$ is not well posed, because there are 10 interior equations on $M$ but 11 boundary conditions on $\partial M$ --- notice that, the gauge term $\beta_{\tilde g^{(4)}}g^{(4)}$ defines a vector field in $V^{(4)}$, so it contributes $4$ extra boundary equations in $(2.4)$. 

Secondly, the associated boundary map to BVP (2.4) is not well-defined. Let $\mathcal E$ be the space of stationary vacuum metrics, i.e.
\begin{equation}
\mathcal E=\{g^{(4)}\in\mathcal S:~Ric_{g^{(4)}}=0\}.
\end{equation}
As is explained above, after adding the gauge term $\beta_{\tilde g^{(4)}}g^{(4)}$, the boundary map $\Pi_1$ defined in (1.8) should be modified to $\Pi_2$ as follows, 
\begin{equation*}
\begin{split}
\Pi_2:&\mathcal E/\mathcal D_4\rightarrow\mathbf B(\partial M),\\
\Pi_2([g^{(4)}])&=(g_{\partial M}, H_{\partial M}, tr_{\partial M}K,\omega_{\mathbf n}),
\end{split}
\end{equation*}
where the target space $\mathbf B(\partial M)$ is given by $\mathbf B(\partial M)=Met^{m,\alpha}(\partial M)\times [C^{m-1,\alpha}(\partial M)]^2\times \wedge_1^{m-1,\alpha}(\partial M)$ (cf.\S 7.1 for the notations of various spaces of tensor fields).
However, this map is not well defined, because elements in $\mathcal D_4$ do not always preserve the Bartnik boundary data (cf. Proposition 2.2 below), which means that the Bartnik boundary data is not well defined for an element $[g^{(4)}]$ --- an equivalence class of metrics --- in the moduli space $\mathcal E/\mathcal D_4$. 

Since we are working with stationary metrics, it is natural to require elements in $\mathcal D_4$ to be time-independent and preserve the Killing vector field $\partial_t$. Thus a general element in $\mathcal D_4$ can be decomposed into two parts --- a diffeomorphism on the hypersurface $M$ and a translation of time, i.e. $\mathcal D_4$ can be defined as,
\begin{equation}
\begin{split}
\mathcal D_4=\{\Phi_{(\psi,f)}|~&\psi\in D^{m+1,\alpha}_{\delta}(M)\text{ and }\psi|_{\partial M}=Id_{\partial M};\\
&~f \in C^{m+1,\alpha}_{\delta}(M)\text{ and } f|_{\partial M}=0;\\
&\Phi_{(\psi,f)}:V^{(4)}\rightarrow V^{(4)},\\
&\Phi_{(\psi,f)}[t,p]=[t+f,\psi(p)],\quad\forall t\in\mathbb R,~p\in M.~\},
\end{split}
\end{equation}
Here $D^{m+1,\alpha}_{\delta}(M)$ denotes the group of $C^{m+1,\alpha}$ diffeomorphisms of $M$ which are asymptotically $Id_{M}$ at the rate of $\delta$ (cf.\S7.1).
\begin{proposition} If an element $\Phi_{(\psi,f)}\in\mathcal D_4$ has a nontrivial time translation function $f$, then it does not preserve the Bartnik boundary data on $\partial M$.
\end{proposition}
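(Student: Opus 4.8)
The plan is to show that a time translation whose normal derivative at $\partial M$ does not vanish tilts the Cauchy surface $M=\{t=0\}$ near its boundary, that this tilt boosts the normal $2$-frame of $\partial M$ inside the spacetime, and that such a boost necessarily moves the Bartnik data. A first simplification is to write $\Phi_{(\psi,f)}=\Phi_{(\psi,0)}\circ\Phi_{(\mathrm{Id},f)}$. The purely spatial diffeomorphism $\Phi_{(\psi,0)}$ restricts to $\mathrm{Id}_{\partial M}$, and on $\{t=0\}$ its induced data set is the isometric pullback $(\psi^{*}g,\psi^{*}K)$ of $(g,K)$ by $\psi$; since $\psi$ fixes $\partial M$ pointwise, $\Phi_{(\psi,0)}^{*}g^{(4)}$ has exactly the same Bartnik data as $g^{(4)}$. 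Hence it suffices to treat $\Phi:=\Phi_{(\mathrm{Id},f)}$, $\Phi[t,p]=[t+f(p),p]$, with $f|_{\partial M}=0$; I will show the Bartnik data is unchanged precisely when $\partial_{\mathbf n}f|_{\partial M}\equiv 0$, which is the relevant meaning of ``nontrivial'' here (if $df|_{\partial M}\equiv 0$ then $\Phi$ fixes a full neighborhood of $\partial V^{(4)}$ to first order and nothing happens to the data).

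Next I would recast the problem invariantly. Since $\Phi$ is an isometry of $(V^{(4)},\Phi^{*}g^{(4)})$ onto $(V^{(4)},g^{(4)})$ carrying $M=\{t=0\}$ onto the graph $M_{f}:=\{[f(p),p]:p\in M\}$, and since $\Phi|_{\partial M}=\mathrm{Id}_{\partial M}$ (because $f|_{\partial M}=0$), the Bartnik data of $\Phi^{*}g^{(4)}$ on $M$ equals, as a tuple of tensors on $\partial M$, the Bartnik data of the fixed spacetime metric $g^{(4)}$ computed for the tilted hypersurface $M_{f}$. Now $\partial M_{f}=\partial M=:\Sigma$ is the same spacelike $2$-surface in $V^{(4)}$, so $g_{\Sigma}$ is unchanged; and $H_{\Sigma}$, $tr_{\Sigma}K$, and $\omega_{\mathbf n}$ depend only on $g^{(4)}$, on $\Sigma$, and on the normal $2$-frame $(\mathbf n,\mathbf u)$ of $\Sigma$ along $\Sigma$ (the outward $g^{(4)}$-unit normal to $\Sigma$ tangent to the Cauchy surface, and the future unit normal to the Cauchy surface). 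For $M=\{t=0\}$ this frame is built from $\mathbf n$ and $\frac{1}{N}(\partial_{t}-X)$. For $M_{f}$, a computation in the Lorentzian normal plane of $\Sigma$ in $V^{(4)}$ shows that its normal frame along $\Sigma$ is the Lorentz boost of the one for $\{t=0\}$ by a rapidity $\lambda$ with $\cosh\lambda=(1+\phi X^{\mathbf n})/\sqrt{\,1+2\phi X^{\mathbf n}-\phi^{2}(N^{2}-(X^{\mathbf n})^{2})\,}$, where $\phi=\partial_{\mathbf n}f|_{\partial M}$ and $X^{\mathbf n}=\langle X,\mathbf n\rangle$; in particular, using $N^{2}>\|X\|^{2}$, one has $\lambda\equiv 0$ if and only if $\phi\equiv 0$.

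Finally I would track how the data transforms under the boost $(\mathbf n,\mathbf u)\mapsto(\mathbf n_{\lambda},\mathbf u_{\lambda})$. A direct computation with the normal connection of $\Sigma\subset V^{(4)}$ gives $g_{\Sigma}\mapsto g_{\Sigma}$, $\omega_{\mathbf n}\mapsto\omega_{\mathbf n}+d\lambda$, and $(H_{\Sigma},tr_{\Sigma}K)\mapsto(\cosh\lambda\,H_{\Sigma}\mp\sinh\lambda\,tr_{\Sigma}K,\ \cosh\lambda\,tr_{\Sigma}K\mp\sinh\lambda\,H_{\Sigma})$ — the conceptual reason being that the spacetime mean curvature vector of $\Sigma$ is frame-independent, so only its components in the normal frame move. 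If the Bartnik data were preserved, then $d\lambda=0$, so $\lambda$ is constant on the connected surface $\Sigma\cong S^{2}$; and a nonzero vector in a Lorentzian $2$-plane is never fixed by a nontrivial boost, so if $\lambda\neq 0$ then $(H_{\Sigma},tr_{\Sigma}K)\equiv(0,0)$, contradicting $H_{\partial M}\not\equiv 0$ (which holds for the boundary data under consideration, e.g.\ near the standard flat data where $H_{\partial M}\equiv 2$). Therefore $\lambda\equiv 0$, i.e.\ $\partial_{\mathbf n}f|_{\partial M}\equiv 0$; contrapositively, a $\Phi_{(\psi,f)}$ with nontrivial $f$ does not preserve the Bartnik boundary data.

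I expect the crux to be the middle step: writing the normal $2$-frame of the tilted graph $M_{f}$ along $\partial M$ and showing it is a genuine Lorentz boost of the original frame, with rapidity computed explicitly in terms of $\partial_{\mathbf n}f$ and the shift. This forces one to work inside the $(1{+}1)$-dimensional Lorentzian normal plane of $\Sigma$, where the shift $X$ enters both the metric on that plane and the identification of the timelike normal, and where one must verify the boost parameter is genuinely nonzero. One can instead compute $\Phi^{*}g^{(4)}$ in $3{+}1$ form and then evaluate $tr_{\partial M}K$ and $\omega_{\mathbf n}$ from the ADM formulas, but this is messier and obscures why no cancellation among the four boundary quantities can occur.
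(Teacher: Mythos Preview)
Your approach is essentially the paper's: reduce to a pure time translation $\Phi_f$, observe that along $\partial M$ the normal $2$-frame $(\mathbf n,\mathbf N)$ of $\Sigma$ undergoes a Lorentz boost whose parameter is determined by $\mathbf n(f)|_{\partial M}$, and read off the transformation laws for $H_{\partial M}$, $tr_{\partial M}K$, and $\omega_{\mathbf n}$. Your $\omega_{\mathbf n}\mapsto\omega_{\mathbf n}+d\lambda$ is exactly the paper's $\hat\omega_{\hat{\mathbf n}}=\omega_{\mathbf n}+a^2 d_{\partial M}(b/a)$ under $a=\cosh\lambda$, $b=\sinh\lambda$. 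The paper carries this out by explicitly computing $\Phi_f^*g^{(4)}$ in coordinates and then relating $(\hat{\mathbf N},\hat{\mathbf n})$ to $(\mathbf N,\mathbf n)$; your invariant formulation via the tilted graph $M_f$ inside the fixed spacetime is cleaner but arrives at the same formulas.

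One remark on your closing contrapositive. You argue that preservation of the data forces $d\lambda=0$, hence $\lambda$ constant on the connected $\Sigma$, and that a nonzero constant boost then forces $(H_{\partial M},tr_{\partial M}K)\equiv(0,0)$, which you rule out by invoking $H_{\partial M}\not\equiv 0$ ``for the boundary data under consideration''. That hypothesis is not part of the proposition, and indeed if $(H_{\partial M},tr_{\partial M}K)\equiv(0,0)$ and the boost parameter happens to be constant along $\partial M$, all four pieces of Bartnik data \emph{are} preserved by a nontrivial $\Phi_f$. The paper does not attempt this contrapositive; it simply records the transformation formulas and asserts that for $\mathbf n(f)|_{\partial M}\neq 0$ the data is not invariant, which should be read as ``not invariant in general''. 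The operative content, used immediately afterward to define the intermediate group $\mathcal D$, is only the transformation law itself together with the observation that $\mathbf n(f)|_{\partial M}=0$ is precisely the condition making the boost trivial. So your argument is fine for the paper's purposes, but you should drop the claim that the data \emph{must} change and instead present the transformation formulas as the conclusion.
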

\begin{proof}
Equip $M=\{t=0\}$ with local coordinates $\{x^i\},~i=1,2,3.$ Choose a function $f\in C^{m+1,\alpha}_{\delta}(M)$, and take the diffeomorphism $\Phi_{(Id_M,f)}\in\mathcal D_4:$
\begin{equation*}
\begin{split}
\Phi_{(Id_M,f)}: V^{(4)}&\rightarrow V^{(4)}\\
\Phi_{(Id_M,f)}(t,x_1,x_2,x_3)&=(t+f,x_1,x_2,x_3).
\end{split}
\end{equation*}
In the following, we will use $\Phi_f$ as the abbreviation of $\Phi_{(Id_M,f)}$.
Take a spacetime metric $g^{(4)}\in\mathcal S$ expressed as,
\begin{equation*}
g^{(4)}=-N^2dt^2+g_{ij}(dx^i+X^idt)(dx^j+X^jdt).
\end{equation*}
Let $\hat g^{(4)}$ denotes the pull back metric, i.e. $\hat g^{(4)}=\Phi_f^*g^{(4)}$. Then we have,
\begin{equation*}
	\begin{split}
		\hat g^{(4)}&=-N^2[d(t+f)]^2+g_{ij}[dx^i+X^id(t+f)][dx^j+X^jd(t+f)]\\
		&=-u^2dt^2-u^2df\odot dt+X_idx^i\odot dt-u^2(df)^2+X_idx^i\odot df+g_{ij}dx^idx^j,
	\end{split}
\end{equation*}
where $u^2=N^2-|X|_g^2$. Here we use $\odot$ to denote the symmetrized product between two 1-forms $A,B$, i.e. $A\odot B=A\otimes B+B\otimes A$.
From the expression above, one easily observes that the induced metric on $M\subset (V^{(4)}, \hat g^{(4)})$ is given by,
\begin{equation}
\hat g=-u^2(df)^2+X_idx^i\odot df+g_{ij}dx^idx^j.
\end{equation}
Plugging $f|_{\partial M}=0$ in the equation above, it is obvious that the first Bartnik boundary term $g_{\partial M}$ in (2.4) remains the same under such a time translation. However, this is not the case for the other data $H_{\partial M}$, $tr_{\partial M}K$ and $\omega_{\mathbf n_{\partial M}}$.

Let $\mathbf{N}$ denote the future-pointing time-like unit normal vector to the slice $M\subset (V^{(4)},g^{(4)})$ and $\mathbf n$ denote the outward unit normal of $\partial M\subset (M, g)$. Define $(\mathbf{\hat N},\mathbf{\hat n})$ in the same way for $M\subset (V^{(4)},\hat g^{(4)})$. Then on the boundary $\partial M$, the pairs $(\mathbf N,\mathbf n)$ and $(\mathbf{\hat N},\mathbf{\hat n})$ are related in the following way,
$$
\begin{bmatrix}
d\Phi_f(\mathbf{\hat N})\\
d\Phi_f(\mathbf{\hat n})
\end{bmatrix}
=
\begin{bmatrix}
a&b\\
b&a
\end{bmatrix}
\begin{bmatrix}
\bf N\\
\bf n
\end{bmatrix},
$$
where $a,b$ are scalar fields on $\partial M$ and $a^2-b^2=1$. We refer to \S7.2 for the detailed proof.

Let $\nabla$ be the Levi-Civita connection of the spacetime $(V^{(4)},g^{(4)})$, and $\hat{\nabla}$ denotes that of the spacetime $(V^{(4)},\hat g^{(4)})$, then $\hat{\nabla}=\Phi_f^*(\nabla)$. We use $H_{\partial M}$, $tr_{\partial M}K$ and $\omega_{\mathbf n}$ to denote the Bartnik boundary data of $(V^{(4)},g^{(4)})$ on $\partial M$; and use $\hat H_{\partial M}$, $tr_{\partial M}\hat K$ and $\hat\omega_{\hat{\mathbf n}}$  as that of $(V^{(4)},\hat g^{(4)})$. Then we have the following formula for the mean curvature:
\begin{equation}
\begin{split}
\hat H_{\partial M}&=tr_{\partial M}(\hat\nabla\mathbf{\hat n})\\
&=tr_{\partial M}[\nabla d\Phi_f(\mathbf{\hat n})]\\
&=tr_{\partial M}[\nabla(b\mathbf{N}+a\mathbf{n})]\\
&=btr_{\partial M}(\nabla \mathbf{N})+atr_{\partial M}(\nabla\mathbf{n})\\
&=btr_{\partial M}K+aH_{\partial M}.
\end{split}
\end{equation}
It is easy to show that $tr_{\partial M}K$ is transformed in a similar way as above, i.e.
\begin{equation}
tr_{\partial M}\hat K=atr_{\partial M}K+bH_{\partial M}.
\end{equation}
As for the last boundary term $\omega_{\mathbf n}$, one has $\forall v\in T(\partial M)$,
\begin{equation*}
\begin{split}
\hat \omega_{\hat{\mathbf n}}(v)&=\hat K(\hat{\mathbf n},v)=\langle\hat\nabla_v\hat{\mathbf N},\hat{\mathbf n}\rangle_{\hat g^{(4)}}\\
&= \langle\Phi_f^*(\nabla)_{v}\mathbf{\hat N},\mathbf{\hat n}\rangle_{\Phi_f^*g^{(4)}}\\
&=\langle\nabla_{d\Phi_f(v)}(a\mathbf{N}+b\mathbf n),~b\mathbf{N}+a\mathbf n\rangle_{g^{(4)}}\\
&=-b\cdot \nabla_{d\Phi_f(v)}a+a\cdot \nabla_{d\Phi_f(v)}b+(a^2-b^2)\langle\nabla_{d\Phi_f(v)} \mathbf N,\mathbf n\rangle_{g^{(4)}}\\
&=a^2\nabla_{d\Phi_f(v)}(b/a)+K(\mathbf n,d\Phi_f(v)),\\
&=a^2v(b/a)+\omega_{\mathbf n}(v).
\end{split}
\end{equation*}
Here the last equality is based on the observation that $d\Phi_f(v)=v~\forall v\in T(\partial M)$, since $\Phi_f|_{\partial M}=Id_{\partial M}$. From the formula above, we conclude that,
\begin{equation}
\hat \omega_{\mathbf {\hat n}}=a^2d_{\partial M}(b/a)+\omega_{\mathbf n},
\end{equation}
where $d_{\partial M}(b/a)$ denotes the exterior derivative of the scalar field on $\partial M$. Along the boundary $\partial M$, one has
\begin{equation}
a=\frac{1+\langle X,\mathbf n\rangle\mathbf n(f)}{\sqrt{[1+\langle X,\mathbf n\rangle\mathbf n(f)]^2-N^2|\mathbf n(f)|^2}}.
\end{equation}
We refer to the Appendix \S 7.2 for the detailed calculation of the scalar fields $a,b$.
Therefore, if the function $f$ is nontrivial, in the sense that $\mathbf n(f)|_{\partial M}\neq0$ and $a\neq1$ in (2.11), then it is easy to observe from equations (2.8-10) that the Bartnik boundary conditions are not invariant under the diffeomorphism $\Phi_f$.
\end{proof}

In view of the fact above, one may suggest to reduce the diffeomorphism group $\mathcal D_4$ in the definition of the boundary map to a smaller one $\mathcal D_3$  consisting of only 3-dim diffeomorphism on the slice, i.e. 
\begin{equation}
\mathcal D_3=\{\Phi_{(\psi,f)}\in\mathcal D_4:~f\equiv 0 \text{ on }M\}.
\end{equation}
However, this approach does not work either. Let $\Pi_3$ be the associated boundary map as follows,
$$\Pi_3:\mathcal E/\mathcal D_3\rightarrow \mathbf B$$
$$\Pi_3([g^{(4)}])=(g_{\partial M},H_{\partial M},tr_{\partial M}K,\omega_{\mathbf n}).$$
Given a fixed boundary condition $(\gamma,H,k,\tau)$, and an element $g^{(4)}$ in the pre-image set $\Pi_3^{-1}[(\gamma,H,k,\tau)]$, we can take an arbitrary function $f\in C^{m+1,\alpha}_{\delta}(M)$ such that $f|_{\partial M}=\mathbf n(f)|_{\partial M}=0$, and make time translation $\Phi_f$ to obtain a new metric $\bar g^{(4)}=\Phi_f^*g^{(4)}$. Then, by the previous analysis, $\bar g^{(4)}$ also belongs to $\Pi_3^{-1}[(\gamma,H,k,\tau)]$. 

By taking a smooth curve (parametrized by $\tau$) of such time translations $\Phi_{f(\tau)}$ $(\tau\in (-1,1))$, we get a family of metrics $g^{(4)}(\tau)=\Phi^*_{f(\tau)}g^{(4)}$. Let $f_{\tau}=\frac{\partial}{\partial\tau}|_{\tau=0}f$, then the infinitesimal deformation of the spacetime metric at $\tau=0$ is of the form,
\begin{equation*}
(g^{(4)})'=L_{f_{\tau}\partial_t}g^{(4)}= df_{\tau}\odot (\partial t)^{\flat}=df_{\tau}\odot (-u^2dt+X_idx^i).
\end{equation*}
By construction, $g^{(4)}(\tau)\in\Pi_3^{-1}[(\gamma,H,k,\tau)]$ for all $\tau$, which implies that $(g^{(4)})'\in \text{Ker} D\Pi_3$. Such a kernel element is nontrivial if it is not tangent to any 3-dim diffeomorphism variation, i.e. the following equation is not solvable for $Z\in T^{m,\alpha}_{\delta}M$,
\begin{equation}
df_{\tau}\odot (-u^2dt+X_idx^i)= L_Zg^{(4)}.
\end{equation}
Since $(2.13)$ is an overdetermined system for $Z$, it is not solvable for generic choices of $f_{\tau}$. This means that the kernel of $D\Pi_3$ should be of infinite dimension, which indicates that $\Pi_3$ is not a Fredholm map.

From all the previous analysis, we notice that the Neumann data $\mathbf n(f)$ of the time translation function plays an important role in choosing the right diffeomorphism group. This suggests defining a new group $\mathcal D$ as,
\begin{equation}
\mathcal D=\{\Phi_{(\psi,f)}\in\mathcal D_4:\mathbf n_g(f)=0\text{ on }\partial M\}.
\end{equation}
It is in fact an intermediate group in the sense that $\mathcal D_3\subset \mathcal D\subset\mathcal D_4$.
\medskip
\begin{remark} The vector field $\mathbf n_g$ in (2.14) can be taken as the unit normal vector of $\partial M$ with respect to any Riemannian metric $g$ on $M$ --- the group $\mathcal D$ does not depend on the choice of the metric $g$. In fact, it is easy to observe that $\mathcal D$ can be defined in an equivalent way:
\begin{equation*}
\mathcal D=\{\Phi_{(\psi,f)}\in\mathcal D_4: df=0\text{ at }\partial M\}.
\end{equation*}
Notice that $\mathbf n(f)=0$ in (2.14) yields $a=1$ in (2.11). This further implies that, geometrically, elements in the group $\mathcal D$ are diffeomorphisms of the spacetime $(V^{(4)},g^{(4)})$ which fix the boundary $\partial M$ and the time-like unit normal vector field $\mathbf N$ along $\partial M$.
\end{remark}
\medskip

Now, define $\mathbb E$ to be the quotient space,
$$\mathbb E=\mathcal E/\mathcal D.$$
Elements in $\mathbb E$ are equivalence classes $[g^{(4)}]$ given by,
\begin{equation*}
\begin{split}
[g^{(4)}]=\{\Phi_{(\psi,f)}^*g^{(4)}:~ g^{(4)}\in\mathcal E,~ \Phi_{(\psi,f)}\in\mathcal D\}.
\end{split}
\end{equation*}
Now we can consider the natural boundary map:
\begin{equation}
       \begin{split}
       		 \Pi:&\mathbb E\rightarrow\mathbf B\\
		 \Pi([g^{(4)}])&=(g_{\partial M},H_{\partial M},tr_{\partial M}K,\omega_{\mathbf n}).
	\end{split}
\end{equation}

This map is well defined --- the Bartnik boundary data is the same for all the metrics inside one equivalence class $[g^{(4)}]\in\mathbb E$, because the transformation formulas $(2.8-10)$ show that Bartnik boundary data is preserved under diffeomorphisms in $\mathcal D$. In the following sections we will prove this boundary map $\Pi$ is Fredholm.
\begin{remark} By the Remark 2.1, a general spacetime metric $g^{(4)}$ in an equivalence class $[g^{(4)}]\in\mathbb E$ does not necessarily induce Rimannian geometry on the fixed slice $M$, which might make the Bartnik boundary data not well-defined on the slice $M\subset (V^{(4)},g^{(4)})$. To solve this problem, we can restrict the boundary map $\Pi$ in (2.15) to the open subset $\mathbb E'\subset \mathbb E$, where every equivalent class $[g^{(4)}]\in\mathbb E'$ admits a representative $g^{(4)}$ which induces Riemannian metric on $M$. On the other hand, since the ellipticity of the Bartnik boundary data is only a local property, we can focus the study in a neighborhood of such metric $g^{(4)}\in\mathcal E$ and think of $\mathbb E$ locally as a slice of $\mathcal E$ under the action of $\mathcal D$. The boundary map $\Pi$ being Fredholm implies that the linearization $D\Pi_1$ of the map $\Pi_1$ in (1.8) at $g^{(4)}$ has finite dimensional kernel transverse to the action of $\mathcal D$ in the sense that 
$\text{Ker}D\Pi_1/\sim$ has finite dimension. Here two deformations $h^{(4)}_1,h^{(4)}_2\in \text{Ker}D\Pi_1$ are equivalent ($h^{(4)}_1\sim h^{(4)}_2$) if the difference $(h^{(4)}_1-h^{(4)}_2)=\delta^*_{g^{(4)}}Y$ for some $Y\in T\mathcal D$. In the following sections we always assume that the reference spacetime metric $\tilde g^{(4)}$ is chosen so that the slice $M$ is a Cauchy surface in $(V^{(4)},\tilde g^{(4)})$.
\end{remark}

%%%%%%%%%%%%%%%%%%%%%%%%%%%%%%%%%%%
%%%%%%%%%%%%%%%%%%%%%%%%%%%%%%%%%%%%%%%%%%%%%%%%%%%%%%%%%%%%%%%%%%%%%%%%%%%%%%%%%%%%%%%%%%%%%%%%%%%%%%%%%%%%%%%%%%%%%%%%%%%%%%%%%%%%%%%%%%%%%%%%%%%%%%%%%%%%%%%%%%%%%%%%%%%%%%%%%%%%
\section{The well-defined BVP}
Throughout this section, we take $\tilde g^{(4)}\in\mathcal E$ as a fixed reference metric and make the following assumption:
\begin{assumption}
The BVP with unknown $Y\in T^{m,\alpha}_{\delta}(V^{(4)})$, given by,
\begin{equation}
	\begin{cases}
		\beta_{g^{(4)}}\delta_{g^{(4)}}^*Y=0\quad\text{on}\quad M\\
		Y=0\quad\text{on}\quad\partial M
	\end{cases}
\end{equation}
has only the zero solution $Y=0$ when $g^{(4)}=\tilde g^{(4)}$.
\end{assumption}
In the above, $T^{m,\alpha}_{\delta}(V^{(4)})$ denotes the space of $C^{m,\alpha}$ vector fields in $V^{(4)}$, which are asymptotically zero at the rate of $\delta$ and in addition time-independent(cf.\S7.1). Throughout this paper, we say a tensor field $T$ in $V^{(4)}$ is \textit{time-independent} if $L_{\partial_t}T=0$. 

In the following, we call the operator $\beta_{\tilde g^{(4)}}\delta^*_{\tilde g^{(4)}}$ \emph{invertible} if Assumption 3.1 holds. Note that since the system (3.1) is elliptic and self-adjoint (cf.\S5.1), it has trivial kernel if and only if the following map is an isomorphism:
\begin{equation*}
\beta_{\tilde g^{(4)}}\delta^*_{\tilde g^{(4)}}: \mathcal T\to T^{m-2,\alpha}_{\delta+2}(V^{(4)}),
\end{equation*}
where the domain space $\mathcal T=\{Y\in T^{m,\alpha}_{\delta}(V^{(4)}): Y=0\text{ on }\partial M\}$. This is an open condition. Thus if $\beta_{\tilde g^{(4)}}\delta^*_{\tilde g^{(4)}}$ is invertible, then so is the operator $\beta_{g^{(4)}}\delta^*_{ g^{(4)}}$ for $g^{(4)}$ near $\tilde g^{(4)}$ in the space $\mathcal S$. We refer to Remark 4.3 for more insights about the Assumption 3.1.

Based on the discussions in \S2, we modify the system (2.4) to a new BVP with unknowns $(g^{(4)},F)\in  \mathcal S\times C^{m,\alpha}_{\delta}(M)$ as follows,
\begin{equation}
	\begin{split}
		&\quad\quad\begin{cases}
			Ric_{g^{(4)}}+\delta^*_{g^{(4)}}\beta_{\tilde g^{(4)}}g^{(4)}=0\\
			\Delta F=0
		\end{cases}
		\quad\text{on}\quad M,\\
		&\quad\quad\begin{cases}			
			g_{\partial M}=\gamma\\
			aH_{\partial M}+btr_{\partial M}K=H\\
			atr_{\partial M}K+bH_{\partial M}=k\\
			\omega_{\mathbf n}+a^2d_{\partial M}(a/b)=\tau\\
			\beta_{\tilde g^{(4)}}g^{(4)}=0
		\end{cases}\quad\text{on}\quad\partial M,
	\end{split}
\end{equation}
where 
\begin{equation}
a=\frac{1+\langle X,\mathbf n\rangle F}{\sqrt{(1+\langle X,\mathbf n\rangle F)^2-N^2F^2}},~\text{ and }~b=\sqrt{a^2-1},
\end{equation}
with $N$ and $X$ denoting the lapse function and shift vector of $g^{(4)}$ as in (2.1). In the second equation above, $\Delta=-trHess$ denotes the Laplace operator with respect to the metric $g^{(4)}$. Here we think of $F\in C^{m,\alpha}_{\delta}(M)$ as a function in $V^{(4)}$ by expanding it time-independently. The argument to follow works in the same way if one sets $\Delta$ to be the Laplacian of the induced Riemannian metric $g$ on the slice $M$. But with the former choice, the principal symbol which we will compute in $\S 4$ is simpler.

Applying the Bianchi operator to the first equation of $(3.2)$, one obtains,
\begin{equation}
\beta_{g^{(4)}}\delta^*_{g^{(4)}}[\beta_{\tilde g^{(4)}}g^{(4)}]=0\quad\text{on }M.
\end{equation}
In addition, the last boundary condition in (3.2) gives,
\begin{equation}\beta_{\tilde g^{(4)}}g^{(4)}=0\quad\text{on } \partial M.
\end{equation}
Combining (3.4) and (3.5), together with the Assumption 3.1, it follows that,
\begin{equation*}
\begin{split}
\beta_{\tilde g^{(4)}}g^{(4)}=0,\quad\forall \text{ solution } g^{(4)}\text{ of (3.2) near }\tilde g^{(4)}.
\end{split}
\end{equation*}
Therefore, if $g^{(4)}$ is a solution to (3.2) near $\tilde g^{(4)}$, then it must be Ricci flat and Bianchi-free with respect to $\tilde g^{(4)}$. So we define a solution space $\mathcal C$ as follows:
\begin{equation*}
	\begin{split}
		\mathcal C:=\{~(g^{(4)},F)\in\mathcal S\times C^{m,\alpha}_{\delta}(M):~&Ric_{g^{(4)}}=0,~\beta_{\tilde g^{(4)}}g^{(4)}=0,~\Delta F=0~\text{on}~M~\}.
			\end{split}
\end{equation*}
Obviously, $(\tilde g^{(4)},0)\in\mathcal C$. Let $\tilde \Pi$ be the boundary map:
\begin{equation*}
	\begin{split}
		\tilde \Pi:~\mathcal C&\rightarrow\mathbf{B}\\
		\tilde \Pi(g^{(4)},F)=(g_{\partial M},a H_{\partial M}+btr_{\partial M} K,&atr_{\partial M} K+b H_{\partial M},\omega_{\mathbf n}+a^2d_{\partial M}(b/a)).
	\end{split}
\end{equation*}
%%%%%%%%%%%%%%%%%%%%%%%%%%%%%%%%%%%%%%%%%%%%%%%%%%%%%%%%%%%%%%%%
This map $\tilde\Pi$ is closely related to the boundary map $\Pi$ defined in (2.15) near the reference pair $(\tilde g^{(4)},0)\in\mathcal C$. In fact, we have the following theorem.
\begin{theorem}
There is a map $\mathcal P:\mathcal C\to\mathbb E$ which is locally a diffeomorphism near $(\tilde g^{(4)},0)$, and the boundary maps $\Pi$ and $\tilde\Pi$ are related by
\begin{equation*}
\tilde\Pi=\Pi\circ\mathcal P.
\end{equation*}
\end{theorem}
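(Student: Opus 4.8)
\emph{Construction of $\mathcal P$.} The plan is to realize $\mathcal P$ as the assignment to a pair $(g^{(4)},F)\in\mathcal C$ of the $\mathcal D$-class of a suitable time translation of $g^{(4)}$. Given $(g^{(4)},F)$, pick a time-independent function $f=f(g^{(4)},F)$ on $M$ with $f|_{\partial M}=0$ and $\mathbf n(f)|_{\partial M}=F|_{\partial M}$, where $\mathbf n$ is the unit normal of $\partial M$ in the slice $(M,g)$; for definiteness take $f$ to be the decaying biharmonic extension with these Cauchy data, which depends smoothly on $(g^{(4)},F)$ near $(\tilde g^{(4)},0)$. Set $\mathcal P(g^{(4)},F)=[\Phi_f^{*}g^{(4)}]\in\mathbb E$. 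Since $\mathcal S$, hence $\mathcal E$, is $\mathcal D_4$-invariant (Remark 2.1), $\Phi_f^{*}g^{(4)}\in\mathcal E$; and if $f_1,f_2$ are two admissible choices then $f_1-f_2$ vanishes to first order on $\partial M$, so $\Phi_{f_1-f_2}=\Phi_{f_2}^{-1}\circ\Phi_{f_1}\in\mathcal D$ and the two pullbacks represent the same class --- thus the value in $\mathbb E$ is independent of the extension used, and $\mathcal P(\tilde g^{(4)},0)=[\tilde g^{(4)}]$ (take $f\equiv 0$). The identity $\tilde\Pi=\Pi\circ\mathcal P$ is then read off from the transformation laws in the proof of Proposition 2.2: with $f|_{\partial M}=0$, (2.7) gives $\hat g_{\partial M}=g_{\partial M}$, while (2.8)--(2.10) give $\hat H_{\partial M}=aH_{\partial M}+b\,tr_{\partial M}K$, $tr_{\partial M}\hat K=a\,tr_{\partial M}K+bH_{\partial M}$ and $\hat\omega_{\hat{\mathbf n}}=\omega_{\mathbf n}+a^{2}d_{\partial M}(b/a)$ with $a,b$ the scalars (2.12), which coincide with (3.3) precisely because $\mathbf n(f)|_{\partial M}=F|_{\partial M}$. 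Hence $\Pi(\mathcal P(g^{(4)},F))$ is exactly $\tilde\Pi(g^{(4)},F)$.

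\emph{Local structures.} To make ``local diffeomorphism'' meaningful I equip both spaces with local smooth structures near the base points. The set $\mathcal Z_4=\{g^{(4)}\in\mathcal S:\,Ric_{g^{(4)}}=0,\ \beta_{\tilde g^{(4)}}g^{(4)}=0\}$ is the Bianchi-gauge slice for $\mathcal E$, a $C^{\infty}$ Banach manifold near $\tilde g^{(4)}$ locally parametrizing $\mathcal E/\mathcal D_4$ (cf.\ [Az]); and $\mathcal C$ is the total space of the bundle over $\mathcal Z_4$ with fibre $\{F\in C^{m,\alpha}_{\delta}(M):\Delta_{g^{(4)}}F=0\}$. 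Since the exterior Dirichlet problem for the elliptic operator $\Delta_{g^{(4)}}$ is uniquely and smoothly solvable in the relevant weighted spaces, $F\mapsto F|_{\partial M}$ trivializes this fibre, so $\mathcal C$ is a $C^{\infty}$ Banach manifold, locally $\mathcal Z_4\times C^{m,\alpha}(\partial M)$. On the target side I would build a slice for the $\mathcal D$-action on $\mathcal E$ near $\tilde g^{(4)}$ by enlarging $\mathcal Z_4$ by the extra orbit directions $T\mathcal D_4/T\mathcal D$, which are parametrized by $\mathbf n(\zeta)|_{\partial M}$ for $\zeta\partial_t\in T\mathcal D_4$, i.e.\ again by $C^{m,\alpha}(\partial M)$: the map $(g^{(4)},\varphi)\mapsto[\Phi_{f_\varphi}^{*}g^{(4)}]$, with $f_\varphi$ a fixed extension having $\mathbf n(f_\varphi)|_{\partial M}=\varphi$, is a local chart $\mathcal Z_4\times C^{m,\alpha}(\partial M)\to\mathbb E$ at $[\tilde g^{(4)}]$. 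In these coordinates $\mathcal P$ is the identity on the $\mathcal Z_4$ factor and a smooth invertible reparametrization $F|_{\partial M}\mapsto\mathbf n(f)|_{\partial M}=F|_{\partial M}$ on the second factor, hence a local diffeomorphism near $(\tilde g^{(4)},0)$.

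\emph{Differential and the inverse function theorem.} Equivalently --- and this is what I would write out in detail --- one checks that
\[
d\mathcal P_{(\tilde g^{(4)},0)}(h,\phi)=[\,h+L_{\dot f(\phi)\partial_t}\tilde g^{(4)}\,]\in T_{\tilde g^{(4)}}\mathcal E\,/\,\delta^{*}_{\tilde g^{(4)}}(T\mathcal D)=T_{[\tilde g^{(4)}]}\mathbb E,
\]
where $\dot f(\phi)$ is time-independent with $\dot f|_{\partial M}=0$ and $\mathbf n(\dot f)|_{\partial M}=\phi|_{\partial M}$, is a topological isomorphism, and then applies the inverse function theorem for Banach manifolds. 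Injectivity: if $h+L_{\dot f(\phi)\partial_t}\tilde g^{(4)}=\delta^{*}_{\tilde g^{(4)}}Y$ with $Y\in T\mathcal D$, then since $L_{\dot f(\phi)\partial_t}\tilde g^{(4)}=\delta^{*}_{\tilde g^{(4)}}(2\dot f(\phi)\partial_t)$ we get $h=\delta^{*}_{\tilde g^{(4)}}(Y-2\dot f(\phi)\partial_t)$; applying $\beta_{\tilde g^{(4)}}$ and using $\beta_{\tilde g^{(4)}}h=0$ together with $(Y-2\dot f(\phi)\partial_t)|_{\partial M}=0$, Assumption 3.1 forces $Y=2\dot f(\phi)\partial_t$, hence $h=0$; then $Y\in T\mathcal D$ forces $\mathbf n(\dot f(\phi))|_{\partial M}=0$, so $\phi|_{\partial M}=0$ and $\phi=0$ by uniqueness of the exterior Dirichlet problem. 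Surjectivity is the standard gauge-fixing: an arbitrary $k\in T_{\tilde g^{(4)}}\mathcal E$ differs from a Bianchi-free $h$ (still linearized Ricci-flat, since $Ric_{\tilde g^{(4)}}=0$) by $\delta^{*}_{\tilde g^{(4)}}Z$ with $Z|_{\partial M}=0$, obtained by solving the Bianchi equation $\beta_{\tilde g^{(4)}}\delta^{*}_{\tilde g^{(4)}}Z=\beta_{\tilde g^{(4)}}k$ --- the place where Assumption 3.1 is used --- and, writing $Z=W+\zeta\partial_t$ with $W$ spatial (so $\delta^{*}_{\tilde g^{(4)}}W\in\delta^{*}_{\tilde g^{(4)}}(T\mathcal D)$), the residual $T\mathcal D_4/T\mathcal D$ ambiguity is absorbed by choosing $\phi$ with $\Delta\phi=0$ and $\phi|_{\partial M}=\tfrac12\mathbf n(\zeta)|_{\partial M}$, so that $d\mathcal P_{(\tilde g^{(4)},0)}(h,\phi)=[k]$.

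\emph{Main obstacle.} The hard part is the target side of the second step: upgrading the known slice theorem for $\mathcal D_4$ to the intermediate group $\mathcal D$ and confirming that $\mathbb E=\mathcal E/\mathcal D$ is a genuine $C^{\infty}$ Banach manifold near $[\tilde g^{(4)}]$ whose quotient topology agrees with the model $\mathcal Z_4\times C^{m,\alpha}(\partial M)$ and for which $\mathcal P$ is a chart --- equivalently, the surjectivity of $d\mathcal P_{(\tilde g^{(4)},0)}$. Both rest on Assumption 3.1 (invertibility of $\beta_{\tilde g^{(4)}}\delta^{*}_{\tilde g^{(4)}}$ with Dirichlet data), which is exactly what splits off the extra $C^{m,\alpha}(\partial M)$ of gauge directions cleanly. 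The remaining points --- matching the weights $\delta,\delta+1,\delta+2$ as one applies $\beta$, applies $\delta^{*}$, solves the Dirichlet problem, and checks the smooth dependence of $f(g^{(4)},F)$ --- are routine but require some care with the weighted spaces.
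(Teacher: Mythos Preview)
Your proposal is correct and follows the paper's approach in all essentials: the construction of $\mathcal P$ via time translation with $\mathbf n(f)|_{\partial M}=F|_{\partial M}$, the verification of $\tilde\Pi=\Pi\circ\mathcal P$ from (2.8)--(2.10), and the use of Assumption 3.1 to gauge-fix are exactly what the paper does.

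The one organizational difference worth noting is how the local diffeomorphism is established. The paper constructs an explicit inverse $\tilde{\mathcal P}$ at the nonlinear level: it applies the implicit function theorem to $\mathcal G(g^{(4)},\Phi)=\beta_{\tilde g^{(4)}}\Phi^{*}g^{(4)}$ (whose $\Phi$-linearization is $\beta_{\tilde g^{(4)}}\delta^{*}_{\tilde g^{(4)}}$, invertible by Assumption 3.1) to produce, for each $g^{(4)}$ near $\tilde g^{(4)}$, a unique $\Phi_{(\psi,f)}$ with $\Phi^{*}g^{(4)}$ Bianchi-free, and then pairs this with the harmonic function $\hat F$ having Dirichlet datum $\mathbf n(-f)|_{\partial M}$. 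You instead compute $d\mathcal P$ and invoke the inverse function theorem; your injectivity and surjectivity arguments are the linearized shadows of the paper's construction. The paper's route has one advantage that dissolves what you flag as the ``main obstacle'': it does not need an a priori Banach manifold structure on $\mathbb E$, since the bijection $\mathcal P$ (with its explicitly built inverse) simply \emph{transports} the smooth structure from $\mathcal C$ to a neighborhood of $[\tilde g^{(4)}]$. Your approach (b) via $d\mathcal P$ is slightly circular on this point, but your approach (a)---declaring $(g^{(4)},\varphi)\mapsto[\Phi_{f_\varphi}^{*}g^{(4)}]$ a chart---is precisely the paper's resolution, just phrased differently.
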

\begin{proof}Given an element $(\hat g^{(4)},\hat F)\in\mathcal C$, one can take a function $f$ on $M$ such that $f|_{\partial M}=0$ and $\mathbf n(f)|_{\partial M}=\hat F|_{\partial M}$, and apply the diffeomorphism $\Phi_{(\psi,f)}\in\mathcal D_4$ to $\hat g^{(4)}$ where $\psi$ is an arbitrary diffeomorphism in $\mathcal D^{m,\alpha}_{\delta}(M)$ with $\psi|_{\partial M}=Id_{\partial M}$. Thus, any element $(\hat g^{(4)},\hat F)\in\mathcal C$ gives rise to a class of elements as follows,
\begin{equation}
\{\Phi_{(\psi,f)}^*(\hat g^{(4)}):~\Phi_{(\psi,f)}\in\mathcal D_4,~\mathbf n( f)|_{\partial M}=\hat F|_{\partial M}\}.
\end{equation} 
It is easy to observe that the equivalence class above actually defines an element in $\mathbb E$. Henceforth we can define a map $\mathcal P$ as, 
\begin{equation*}
\begin{split}
\mathcal P:\mathcal C&\rightarrow\mathbb E,\\
\mathcal P(\hat g^{(4)},\hat F)&=[g^{(4)}],
\end{split}
\end{equation*}
where $[g^{(4)}]$ is defined as the equivalence class $(3.6)$.

On the other hand, consider the following map:
\begin{equation*}
\begin{split}
&\mathcal G: \mathcal S\times \mathcal D_4\rightarrow (\wedge_1)^{m,\alpha}_{\delta} (V^{(4)})\\
&\mathcal G(g^{(4)}, \Phi)=\beta_{\tilde g^{(4)}}\Phi^*g^{(4)},
\end{split}
\end{equation*}
where $(\wedge_1)^{m,\alpha}_{\delta} (V^{(4)})$ denotes the space of $C^{m,\alpha}$ 1-forms in $V^{(4)}$ which are time-independent and asymptotically zero at the rate of $\delta$(cf.\S7.1).
The linearization of $\mathcal G$ at $(\tilde g^{(4)}, Id_{V^{(4)}})$ is given by,
\begin{equation*}
\begin{split}
&D\mathcal G|_{(\tilde g^{(4)},Id_{V^{(4)}})}: T\mathcal S\times T\mathcal D_4\rightarrow (\wedge_1)^{m,\alpha}_{\delta}V^{(4)}\\
&D\mathcal G|_{(\tilde g^{(4)},Id_{V^{(4)}})}[(h^{(4)},Y)]=\beta_{\tilde g^{(4)}}\delta_{\tilde g^{(4)}}^*Y+\beta_{\tilde g^{(4)}}h^{(4)}. 
\end{split}
\end{equation*}
By the definition of $\mathcal D_4$, the vector field $Y\in T\mathcal D_4$ is time-independent, asymptotically zero and $Y=0$ on $\partial M$. So the operator $\beta_{\tilde g^{(4)}}\delta^*_{\tilde g^{(4)}}$ in the linearization above is invertible by the Assumption 3.1. Therefore, by the implicit function theorem, there is a neighborhood $U_{\tilde g^{(4)}}$ of $\tilde g^{(4)}$ in $\mathcal S$ such that for any $g^{(4)}\in U$, there is a unique element $\Phi_{(\psi,f)}\in\mathcal D_4$ near $Id_{V^{4}}$ such that the pull back metric $\Phi_{(\psi,f)}^*g^{(4)}$ is gauge-free, i.e. $\beta_{\tilde g^{(4)}}(\Phi_{(\psi,f)}^*g^{(4)})=0$ in $V^{(4)}$. Moreover, if $g^{(4)}$ is vacuum, i.e. $g^{(4)}\in U_{\tilde g^{(4)}}\cap\mathcal E$, then the gauge-free metric $\hat g^{(4)}=\Phi_{(\psi,f)}^*g^{(4)}$ is also vacuum. 

Trivially it follows,
$$g^{(4)}=(\Phi^*_{(\psi,f)})^{-1}\hat g^{(4)}=\Phi_{(\psi^{-1},-f)}^*\hat g^{(4)}.$$
Let $\hat F\in C^{m,\alpha}_{\delta}(M)$ be the unique harmonic function (with respect to the metric $\hat g^{(4)}$) on $M$ satisfying the Dirichlet boundary condition $\hat F=\mathbf n(-f)$ on $\partial M$. Since the diffeomorphism $\Phi_{(\psi,f)}$ is near $Id_{V^{(4)}}$, $\mathbf n(f)$ is close to zero. Thus $F$ is also near the zero function on $M$. Pairing it with $\hat g^{(4)}$, we obtain an element $(\hat g^{(4)},\hat F)\in\mathcal C$ near $(\tilde g^{(4)},0)$.

In addition, if two elements $ g_1^{(4)},~ g_2^{(4)}\in U_{\tilde g^{(4)}}\cap\mathcal E$ are equivalent under some diffeomorphism $\Phi_{(\psi_0,f_0)}\in\mathcal D_4$, then they correspond to the same gauge-free metric $\hat g^{(4)}$ because of the uniqueness shown above. In addition, since the time translation $f_0$ makes $\mathbf n(f_0)=0$ on $\partial M$, $g_1^{(4)}$ and $g_2^{(4)}$ also generate the same harmonic function $\hat F$ as described above. Therefore, in the neighborhood $U_{\tilde g^{(4)}}$ of $\tilde g^{(4)}$ in $\mathcal S$, all the metrics that belong to the same equivalence class $[g^{(4)}]\in\mathbb E$ give rise to a unique pair $( \hat g^{(4)},\hat F)\in\mathcal C$ near $(\tilde g^{(4)},0)$. Define $\mathbb U_{\tilde g^{(4)}}$ to be the corresponding neighborhood of $[\tilde g^{(4)}]$ in $\mathbb E$, i.e.
\begin{equation*}\begin{split}
\mathbb U_{\tilde g^{(4)}}=\{[g^{(4)}]\in\mathbb E:~\text{The equivalence class }[g^{(4)}]\text{ admits a representative } g_0^{(4)}&\\\text{ such that }g_0^{(4)}\in U_{\tilde g^{(4)}}\}&.
\end{split}\end{equation*}
Then for any $[g^{(4)}]\in\mathbb U_{\tilde g^{(4)}}$ we can take an arbitrary representative $g^{(4)}\in U_{\tilde g^{(4)}}$ and then obtain a pair $(\hat g^{(4)},\hat F)$ in the manner described above. Moreover, as is shown, the pair $(\hat g^{(4)},\hat F)$ does not depend on the representative we choose in $U_{\tilde g^{(4)}}$. In this way, one can establish a map $\mathcal {\tilde P}$ locally defined in $\mathbb E$ near $[\tilde g^{(4)}]$ and mapping $\mathbb U_{\tilde g^{(4)}}$ to a neighborhood of  $(\tilde g^{(4)},0)$ in $\mathcal C$, given by,
\begin{equation*}
\begin{split}
\mathcal{\tilde P}:\mathbb U_{\tilde g^{(4)}}&\rightarrow\mathcal C,\\
\mathcal{\tilde P}([g^{(4)}])&=(\hat g^{(4)},\hat F).
\end{split}
\end{equation*}

It is easy to check that $\mathcal P$ and $\mathcal {\tilde P}$ are the inverse map of each other near $(\tilde g^{(4)},0)$. Thus, the spaces $\mathcal C$ and $\mathbb E$ are locally diffeomorphic via $\mathcal P$.

Moreover, based on the formulas $(2.8-10)$, one can easily observe that if $[g^{(4)}]=\mathcal P(\hat g^{(4)},\hat F)$, then their Bartnik boundary data are related in the following way,
\begin{equation*}
\begin{split}
(g_{\partial M}, &H_{\partial M}, tr_{\partial M} K, \omega_{\mathbf n})\\
&=(\hat g_{\partial M},a\hat H_{\partial M}+btr_{\partial M}\hat K,atr_{\partial M}\hat K+b\hat H_{\partial M},\hat\omega_{\mathbf n}+a^2d_{\partial M}(b/a)),
\end{split}
\end{equation*}
where $a,b$ are given by the formulas in $(3.3)$ inside which $F=\hat F$ and $X,N$ are the lapse and shift of $\hat g^{(4)}$. Therefore, near $(\tilde g^{(4)},0)$ the boundary maps $\tilde \Pi$ and $\Pi$ are related by,
$$\tilde \Pi= \Pi\circ\mathcal P.$$
\end{proof}
From the analysis above, we see that locally the solution space $\mathcal C$ is a coordinate chart of the moduli space $\mathbb E$ near the reference metric class $[\tilde g^{(4)}]$, and the map $\tilde\Pi$ is the Bartnik boundary map $\Pi$ expressed in this local chart. In the following, we show that the space $\mathcal C$ admits Banach manifold structure. 
\begin{theorem}
 The space $\mathcal C$ admits smooth Banach manifold structure near $(\tilde g^{(4)},0)$.
\end{theorem}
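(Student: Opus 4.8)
**Proof proposal for Theorem 3.3 (the space $\mathcal C$ admits smooth Banach manifold structure near $(\tilde g^{(4)},0)$).**

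The plan is to realize $\mathcal C$ near $(\tilde g^{(4)},0)$ as the zero set of a submersion between Banach spaces, and to invoke the implicit function theorem. Recall that
$$\mathcal C=\{(g^{(4)},F)\in\mathcal S\times C^{m,\alpha}_{\delta}(M):\ Ric_{g^{(4)}}=0,\ \beta_{\tilde g^{(4)}}g^{(4)}=0,\ \Delta F=0\ \text{on }M\}.$$
First I would introduce the map
$$\Psi:\mathcal S\times C^{m,\alpha}_{\delta}(M)\longrightarrow S^{m-2,\alpha}_{\delta+2}(V^{(4)})\times C^{m-2,\alpha}_{\delta+2}(M),\qquad \Psi(g^{(4)},F)=\bigl(Ric_{g^{(4)}}+\delta^*_{g^{(4)}}\beta_{\tilde g^{(4)}}g^{(4)},\ \Delta_{g^{(4)}}F\bigr),$$
where $S^{m-2,\alpha}_{\delta+2}(V^{(4)})$ denotes the space of time-independent symmetric $2$-tensors on $V^{(4)}$ in the appropriate weighted H\"older class. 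The key observation, already established in the discussion preceding the theorem via (3.4), (3.5) and Assumption 3.1, is that near $(\tilde g^{(4)},0)$ the zero set $\Psi^{-1}(0)$ coincides with $\mathcal C$: any solution of the gauged system is automatically Bianchi-free and hence genuinely Ricci-flat. Thus it suffices to show that $\Psi$ is a submersion at $(\tilde g^{(4)},0)$, i.e. that $D\Psi|_{(\tilde g^{(4)},0)}$ is surjective with a complemented (split) kernel; then $\mathcal C=\Psi^{-1}(0)$ is a smooth Banach submanifold near that point and $T_{(\tilde g^{(4)},0)}\mathcal C=\ker D\Psi|_{(\tilde g^{(4)},0)}$.

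Next I would compute the linearization. The $F$-component decouples to leading order: the second component of $D\Psi$ sends $(h^{(4)},\phi)$ to $\Delta_{\tilde g^{(4)}}\phi$ plus a term linear in $h^{(4)}$ (coming from the dependence of the Laplacian on the metric), but since $\tilde F=0$ this metric-variation term vanishes, leaving precisely $\Delta_{\tilde g^{(4)}}\phi$. The first component is the standard linearized gauged Einstein operator $L(h^{(4)})=D Ric_{\tilde g^{(4)}}(h^{(4)})+\delta^*_{\tilde g^{(4)}}\beta_{\tilde g^{(4)}}h^{(4)}=\tfrac12\nabla^*\nabla h^{(4)}+\text{(curvature terms)}$, the Lichnerowicz-type operator, which is a second-order elliptic operator on $V^{(4)}$ (equivalently on $M$, by time-independence). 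Surjectivity of $D\Psi|_{(\tilde g^{(4)},0)}$ then reduces to two separate solvability statements: given a target symmetric $2$-tensor $r$ on $M$ solve $L(h^{(4)})=r$ for some $h^{(4)}$, and given a target function $\rho$ on $M$ solve $\Delta_{\tilde g^{(4)}}\phi=\rho$. The second is the classical statement that $\Delta_{\tilde g^{(4)}}$ on the asymptotically flat manifold $M$ (exterior region, no boundary condition imposed here since $\mathcal C$ involves no boundary condition) is surjective onto the relevant weighted H\"older space in the range $\tfrac12<\delta<1$ — this follows from the standard Fredholm theory of the Laplacian on weighted spaces (McOwen, Bartnik), using that $\delta$ is a non-exceptional weight. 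For the first, I would argue that $L$ is surjective on the weighted space in this range either directly (the linearized Einstein operator in Bianchi gauge on an asymptotically flat background is surjective with finite-dimensional kernel in these weighted spaces, a fact used already in [Az]) or by citing the corresponding result from the companion paper.

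The main obstacle, and the step requiring the most care, is the surjectivity of the linearized gauged Einstein operator $L$ on the weighted H\"older spaces without any boundary condition — one must verify that the weight $\delta\in(\tfrac12,1)$ avoids the exceptional set and that the asymptotically flat decay assumptions on $\tilde g^{(4)}$ are strong enough for the elliptic theory to apply; this is where Assumption 3.1 and the precise choice of weights in (2.3) really enter. Once surjectivity is in hand, the kernel is automatically complemented: it is the kernel of an elliptic operator on weighted spaces and hence finite-dimensional modulo the gauge/diffeomorphism directions, so it is closed and admits a closed complement, which is all that is needed for the implicit function theorem. Finally, smoothness ($C^\infty$) of the Banach manifold structure follows because $\Psi$ is a smooth (indeed real-analytic) map of Banach spaces — the Ricci tensor, the Bianchi operator, and the Laplacian all depend smoothly on $(g^{(4)},F)$ — so the implicit function theorem yields a $C^\infty$ chart, completing the proof.
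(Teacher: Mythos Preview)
Your approach has a genuine gap in the identification of $\Psi^{-1}(0)$ with $\mathcal C$. You claim that near $(\tilde g^{(4)},0)$ the zero set of $\Psi$ coincides with $\mathcal C$, citing (3.4), (3.5) and Assumption~3.1. But (3.5) is the \emph{boundary} condition $\beta_{\tilde g^{(4)}}g^{(4)}|_{\partial M}=0$ coming from the BVP~(3.2); it is not part of your map $\Psi$, which imposes no boundary constraint whatsoever. Assumption~3.1 asserts triviality of the kernel of $\beta\delta^*$ \emph{subject to Dirichlet data}; without that Dirichlet data the kernel is infinite-dimensional (think of harmonic vector fields on $\mathbb R^3\setminus B$ decaying at infinity---they are parametrized by their boundary values). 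Hence a solution of $Ric_{g^{(4)}}+\delta^*_{g^{(4)}}\beta_{\tilde g^{(4)}}g^{(4)}=0$ on $M$ need not satisfy $\beta_{\tilde g^{(4)}}g^{(4)}=0$, and $\Psi^{-1}(0)$ is strictly larger than $\mathcal C$. Your submersion argument, even if it goes through, would produce a manifold containing $\mathcal C$ as a proper subset, and you would still have to cut out the gauge constraint $\beta_{\tilde g^{(4)}}g^{(4)}=0$ by a further submersion---which is delicate, since together with the gauged Ricci equation this is an overdetermined system.

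The paper proceeds quite differently and avoids this issue altogether: it observes that the projection $P(\hat g^{(4)},\hat F)=\hat g^{(4)}$ lands in the Bianchi-gauge slice of $\mathcal E$, which is identified with the moduli space $\mathcal E/\mathcal D_4$ (already shown to be a smooth Banach manifold in the companion paper~[Az]); the fiber over each point is the space $\mathcal H_{g^{(4)}}$ of $g^{(4)}$-harmonic functions, which is a Banach manifold isomorphic to $C^{m,\alpha}(\partial M)$ via the Dirichlet map. Thus locally $\mathcal C\cong (\mathcal E/\mathcal D_4)\times \mathcal H_{\tilde g^{(4)}}$, and the manifold structure follows. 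This fiber-bundle viewpoint handles the gauge constraint and the harmonic condition separately and cleanly, whereas your single-submersion approach conflates them and loses track of the boundary gauge condition.
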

\begin{proof}
For any stationary vacuum metric $g^{(4)}$, define $\mathcal H_{g^{(4)}}$ as the space of harmonic functions on $M$:
\begin{equation*}
\mathcal H_{g^{(4)}}=\{f\in C^{m,\alpha}_{\delta}(M):\Delta_{g^{(4)}} f=0 \text{ on } M\}.
\end{equation*}
Since $\Delta_{g^{(4)}}$ is invertible when subjected to Dirichlet boundary conditions, it is easy to prove that,
$$\mathcal H_{g^{(4)}}\cong C^{m,\alpha}(\partial M).$$
Thus $\mathcal H_{g^{(4)}}$ is a smooth Banach manifold. 

Let $P:\mathcal C\to \mathcal E$ be the projection $P(\hat g^{(4)},\hat F)=\hat g^{(4)}$. We observe that if $(\hat g^{(4)},\hat F)\in\mathcal C$ near $(\tilde g^{(4)},0)$, then $\hat g^{(4)}\in\mathcal E$ and it satisfies the gauge-free condition $\beta_{\tilde g^{(4)}}\hat g^{(4)}=0$. By the analysis in the proof of Theorem 3.2, any $g^{(4)}\in\mathcal E$ near $\tilde g^{(4)}$ is isometric via a diffeomorphism in $\mathcal D_4$ to a gauge free element $\hat g^{(4)}$. So the projection space $P(\mathcal C)$ is a slice for $\mathcal E$ under the action of $\mathcal D_4$. Therefore, locally the space $\mathcal C$ is a fiber bundle over $\mathcal E/\mathcal D_4$, with the fiber at $[g^{(4)}]$ being $\mathcal H_{g^{(4)}}$. Thus near the element $(\tilde g^{(4)},0)$, we have,
$$\mathcal C\cong \mathcal E/\mathcal D_4\times\mathcal H_{\tilde g^{(4)}}.$$
It is proved in [Az] that the moduli space $\mathcal E/\mathcal D_4$ is a smooth Banach manifold, and hence it follows that $\mathcal C$ admits smooth Banach manifold structure.
\end{proof}
It then follows directly from Theorem 3.2 that the domain space $\mathbb E$ of the Bartnik boundary map $\Pi$ is a smooth Banach manifold. In the following sections, we will show that the boundary map $\tilde\Pi$ is Fredholm, which then implies so is $\Pi$.
%%%%%%%%%%%%%%%%%%%%%%%%%%%%%%%%%%%%%%%%%%%%%%%%%%%%%%%
%%%%%%%%%%%%%%%%%%%%%%%%%%%%%%%%%%%%%%%%%%%%%%%%%%%%%%%%%%%%%%%%%%%%%%%%%%%%%%%%%%%%%%%%%%%%%%%%%%%%%%%%%%%%%%%%%%%%%%%
%%%%%%%%%%%%%%%%%%%%%%%%%%%%%%%%%%%%%%%%%%%%%%%%%%%%%%%%%%%%%%%%%%%%%%%%%%%%%%%%%%%%%%%%%%%%%%%%%%%%%%%%%%%%%%%%%%%%%%%
%%%%%%%%%%%%%%%%%%%%%%%%%%%%%%%%%%%%%%%%%%%%%%%%%%%%%%%%%%%%%%%%%%%%%%%%%%%%%%%%%%%%%%%%%%%%%%%%%%%%%%%%%%%%%%%%%%%%%%%
\section{Ellipticity of BVP (3.2)}~~~

In this section, we will prove the ellipticity of BVP (3.2), implementing the criterion developed by Agmon-Douglis-Nirenberg (cf.[ADN]). We use the following standard notation. Let $\xi$ denote a $1-$form on $M$, $\eta$ denote a nonzero $1-$form tangential to the boundary $\partial M$, and $\mu$ a unit $1-$form normal to the boundary $\partial M$. The index $0$ denotes the direction along $\partial t$ in $V^{(4)}$, and index $1,2,3$ denote the tangential direction on $M$. When restricted on the boundary, index $1$ denotes the (outward) normal direction to $\partial M\subset M$ and indices $2,3$ denote directions tangent to $\partial M$. We use greek letters when $0$ is included in the indices, and latin letters when there are only tangential components involved.

Based on the system (3.2), we define a differential operator $\mathcal F=(\mathcal L,\mathcal B)$ with interior operator $\mathcal L$, mapping a pair $(g^{(4)},F)$ to the interior equations in (3.2):
\begin{equation*}
\begin{split}
\mathcal L:\mathcal S\times C^{m,\alpha}_{\delta}(M)\rightarrow S^{m-2,\alpha}_{\delta+2}(V^{(4)})\times C^{m-2,\alpha}_{\delta+2}(M)\\
\mathcal L(g^{(4)},F)= (~2(Ric_{g^{(4)}}+\delta^*_{g^{(4)}}\beta_{\tilde g^{(4)}}g^{(4)}),~~\Delta F~~);
\end{split}
\end{equation*}
and a boundary operator $\mathcal B$ mapping $(g^{(4)}, F)$ to the boundary equations in $(3.2)$:
\begin{equation*}
\begin{split}
\mathcal B: \mathcal S\times C^{m,\alpha}_{\delta}(M)&\rightarrow \mathbb B\\
\mathcal B(g^{(4)},F)= (~~~
&g_{\partial M},\\
&aH_{\partial M}+btr_{\partial M}K,\\
&atr_{\partial M}K+bH_{\partial M},\\
&\omega_{\mathbf n}+a^2d_{\partial M}(b/a),\\
&\beta_{\tilde g^{(4)}}g^{(4)}~~~).
\end{split}
\end{equation*}
In the above, $S^{m-2,\alpha}_{\delta+2}(V^{(4)})$ denotes the space of symmetric 2-tensors in $V^{(4)}$, which are time independent, $C^{m-2,\alpha}$ smooth and asymptotically zero at the rate of $(\delta+2)$; $\mathbb B$ is an abbreviation of the target space of $\mathcal B$, given by,
$$\mathbb B=S^{m,\alpha}(\partial M)\times [C^{m-1,\alpha}(\partial M)]^2\times \wedge_1^{m-1,\alpha}(\partial M)\times C^{m-1,\alpha}(\partial M)\times \wedge_1^{m-1,\alpha}(\partial M).$$  We refer to \S7.1 for these notations of tensor spaces. 

\begin{theorem}
The linearization $D\mathcal F$ of $\mathcal F$ at $(\tilde g^{(4)},0)$ is elliptic.
\end{theorem}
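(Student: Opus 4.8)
The plan is to verify the Agmon–Douglis–Nirenberg (ADN) conditions for the linearized operator $D\mathcal F = (D\mathcal L, D\mathcal B)$ at $(\tilde g^{(4)},0)$, treating the system as a mixed-order elliptic boundary value problem. First I would fix the ADN weights: since $\mathcal L$ acts on the $(g^{(4)},F)$ by second-order operators (the modified Einstein operator $2(Ric + \delta^*\beta)$ is second order in $g^{(4)}$, and $\Delta$ is second order in $F$), while the boundary operators have various orders ($g_{\partial M}$ is order $0$ in $h^{(4)}$, the $H$- and $tr K$-combinations and $\omega_{\mathbf n}$ are order $1$, and $\beta_{\tilde g^{(4)}}h^{(4)}$ is order $1$), I would assign weights $s_i$ to the equations and $t_j$ to the unknowns so that $D\mathcal L$ has entries of order $\le s_i + t_j$, with $F$ essentially decoupled at the symbol level.

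The key computational step is the interior ellipticity of $D\mathcal L$. Here I would use the well-known fact (going back to DeTurck/Bianchi-gauge arguments, and used in [Az],[AK]) that the principal symbol of $h^{(4)} \mapsto 2(D\mathrm{Ric}_{\tilde g^{(4)}} \cdot h^{(4)} + 2\delta^*_{\tilde g^{(4)}}\beta_{\tilde g^{(4)}} h^{(4)})$ is, up to sign, $|\xi|^2_{\tilde g^{(4)}}\,\mathrm{Id}$ on symmetric $2$-tensors; combined with the symbol $|\xi|^2$ of $\Delta$ on the $F$-slot, $\sigma(D\mathcal L)(\xi)$ is an isomorphism for every nonzero $\xi$. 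One subtlety worth flagging: the operator is posed on the time-independent fields $(g,X,N)$ on $M$, so the relevant cotangent variable is $\xi = \xi_0\,dt + \xi_i\,dx^i$ with the $dt$-component allowed to be nonzero even though everything is $t$-independent; the symbol must be checked as a map on the $10+1$ dimensional fibre. Because $\tilde g^{(4)}$ is Lorentzian one must confirm $|\xi|^2_{\tilde g^{(4)}} \ne 0$ for the relevant covectors — this is where the globally hyperbolic / Cauchy surface hypothesis on $\tilde g^{(4)}$ (Remark 2.3) enters, guaranteeing that the slice $M$ is spacelike so the symbol is elliptic in the spatial variables after the $3+1$ reduction.

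The heart of the proof, and the main obstacle, is the Lopatinski–Shapiro (complementing) condition on $\partial M$. Concretely, freeze coefficients at a boundary point, write $\xi = \eta + z\mu$ with $\eta$ tangential to $\partial M$ and $\mu$ the unit conormal, and examine the ODE system in the normal variable obtained from $\sigma(D\mathcal L)(\eta + z\mu)\hat u = 0$; one must show that the only solution decaying as the normal variable $\to +\infty$ (i.e. in the $L^2$ "stable" subspace of the $z$-roots, all of which lie off the real axis by interior ellipticity) that also satisfies $\sigma(D\mathcal B)(\eta)\hat u = 0$ is $\hat u = 0$. I would follow the strategy of [Az] and [AK]: decompose $h^{(4)}$ according to the spacetime splitting into the induced-metric part $h = h_{ij}$, the shift part $h_{0i}$, the lapse part $h_{00}$; the Dirichlet condition $g_{\partial M} = \gamma$ linearizes to killing the tangential-tangential components $h_{ab}$ ($a,b \in \{2,3\}$) of $h$, the linearized $\beta_{\tilde g^{(4)}}h^{(4)} = 0$ gives four first-order relations that pin down the normal derivatives of the remaining components, and the linearized mean-curvature / $tr K$ / $\omega_{\mathbf n}$ conditions (whose principal parts I would extract from the transformation formulas (2.8)–(2.10) differentiated at $(\tilde g^{(4)},0)$, noting $a \to 1$, $b \to 0$, so the principal symbol in the new variable $F$ enters through $\mathbf n(f)$-type terms) control the remaining first-order data together with the normal derivative of $F$ coming from $\Delta F = 0$. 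Assembling these, the frozen ODE forces all components to vanish; the bookkeeping that the count of boundary conditions ($10$ from the four Bartnik data reorganized plus $4$ from $\beta$, but the Dirichlet $g_{\partial M}$ contributes the tangential block while $\beta$ and the scalar conditions supply the rest) exactly matches the dimension of the stable subspace of the normal-variable ODE is the delicate part, and is precisely the place where the earlier observation that one must add the extra unknown $F$ and the extra equation $\Delta F = 0$ to make the system determined pays off. Once the complementing condition is verified at every boundary point for every nonzero tangential $\eta$, the ADN theorem yields that $D\mathcal F$ is elliptic, which is the assertion of the theorem.
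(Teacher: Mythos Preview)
Your overall framework is right---ADN ellipticity with interior proper ellipticity plus the Lopatinski--Shapiro (complementing) condition at the boundary---and this matches the paper. But there are two genuine gaps.

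First, the interior symbol. The relevant covector $\xi$ lives on $M$, not on $V^{(4)}$; there is no $\xi_0\,dt$ component to worry about. Because the fields are time-independent, the spacetime rough Laplacian $D^*_{\tilde g^{(4)}}D_{\tilde g^{(4)}}$ reduces on $M$ to an operator with principal symbol $a(\xi)=|\xi|_g^2 - N^{-2}\langle X,\xi\rangle^2$ (the $\partial_t$ derivatives drop out, leaving $-N^{-2}\partial_X\partial_X+\sum_i\partial_{e_i}^2$). This is positive-definite on $T^*M$ exactly because $|X|_g^2<N^2$, which is the stationarity condition (2.2), not global hyperbolicity. Your phrasing ``confirm $|\xi|^2_{\tilde g^{(4)}}\neq 0$'' is misleading, since the Lorentzian norm does vanish on null covectors; the reduction to $M$ is what makes the symbol elliptic.

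Second, and more seriously, you do not actually verify the complementing condition---you describe the ODE setup and then assert that ``assembling these, the frozen ODE forces all components to vanish,'' but that assertion is the entire nontrivial content of the theorem. The paper carries this out by explicit linear algebra: it writes the $11\times 11$ principal boundary matrix in block form
\[
B(\xi)=\begin{bmatrix}0_{3\times 8}&I_3\\\tilde B_{8\times 8}&*\end{bmatrix},
\]
where the identity block comes from the Dirichlet condition $h_{\partial M}$ (three components---your count ``$10$ from the four Bartnik data'' is wrong: $g_{\partial M}$ contributes $3$, $H$ and $tr_{\partial M}K$ one each, $\omega_{\mathbf n}$ two, and $\beta$ four, total $11$). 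After row and column reductions one computes $\det\tilde B(\eta+z\mu)$ proportional to $\bigl(z^2-N^{-2}\langle X,\eta+z\mu\rangle^2\bigr)^2|\eta|^4$, and at a root $z$ of $a(\eta+z\mu)=0$ this collapses to $8N^8|\eta|^8\neq 0$. That determinant computation---not the structural outline---is the proof, and it cannot be replaced by a qualitative description. Note also that $G$ enters the boundary symbol only through $N\,d_{\partial M}G$ in the $\omega_{\mathbf n}$ slot (a tangential derivative), not through a normal derivative of $F$; it is zeroth order in the $H$ and $tr_{\partial M}K$ slots.
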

\begin{proof}
We use the characterization of ellipticity in [ADN] to prove the thoerem. We first show in \S4.1 that $D\mathcal F$ is properly elliptic. Then in \S4.2 we show that $D\mathcal F$ satisfies the complementing boundary condition.
\medskip
%%%%%%%%%%%%%%%%%%%%%%%%%%%%%%%%%%%%%%%%%%%%%%%%%%%%%%%%%%%%
\subsection{Properly elliptic condition}~~~

The linearization of the interior operator at $(\tilde g^{(4)},0)$ is given by (cf.[Be])
\begin{equation*}
\begin{split}
D\mathcal L: T_{\tilde g^{(4)}}\mathcal S\times C^{m,\alpha}_{\delta}(M)&\rightarrow S^{m-2,\alpha}_{\delta+2}(V^{(4)})\times C^{m-2,\alpha}_{\delta+2}(M)\\
D\mathcal L(h^{(4)},G)&= (~D_{\tilde g^{(4)}}^*D_{\tilde g^{(4)}} h^{(4)},~\Delta G~).
\end{split}
\end{equation*}
Let $\{e_i\},~(i=1,2,3)$ be a local orthonormal basis of the tangent bundle on $M$. Recall that $\mathbf N$ denotes the future pointing time-like unit vector perpendicular to $M$ in the spacetime. Based on (2.1), $\mathbf N=N^{-1}(\partial_t-X)$, with $X,N$ being the shift vector and lapse function of $\tilde g^{(4)}$. Then the Laplacian $D_{\tilde g^{(4)}}^*D_{\tilde g^{(4)}} h^{(4)}_{\alpha\beta}$ in the above can be expressed in the $3+1$ slice formalism (2.1) of the spacetime as:
\begin{equation*}
\begin{split}
D_{\tilde g^{(4)}}^*D_{\tilde g^{(4)}} h^{(4)}_{\alpha\beta}&=-D_{\mathbf N}D_{\mathbf N} h^{(4)}_{\alpha\beta}+\Sigma_{i=1}^3D_{e_i}D_{e_i} h^{(4)}_{\alpha\beta}+O_1(h^{(4)})\\
 &=-D_{\frac{1}{N}(\partial t-X)}D_{\frac{1}{N}(\partial t-X)} h^{(4)}_{\alpha\beta}+\Sigma_{i=1}^3D_{e_i}D_{e_i} h^{(4)}_{\alpha\beta}+O_1(h^{(4)})\\
 &=-\frac{1}{N^2}\partial_X\partial_X h^{(4)}_{\alpha\beta}+\Sigma_{i=1}^3\partial_{e_i}\partial_{e_i} h^{(4)}_{\alpha\beta}+O_1(h^{(4)}).\end{split}
\end{equation*}
Here $O_1(h^{(4)})$ denotes those terms with lower($\leq 1$) order derivatives.
A similar formula holds for the term $\Delta G$, i.e.
\begin{equation*}
\begin{split}
\Delta G =-\frac{1}{N^2}\partial_X\partial_X G+\Sigma_{i=1}^3\partial_{e_i}\partial_{e_i} G+O_1(G).\end{split}
\end{equation*}

Thus, the matrix of principal symbol for $D\mathcal L$ is given by,
\begin{equation}
L(\xi)=a(\xi)I_{11\times 11}\end{equation}
with
 \begin{equation}
 a(\xi)=\xi_1^2+\xi_2^2+\xi_3^2-\frac{1}{N^2}(X_i\xi^i)^2.
 \end{equation} 
The determinant of this matrix is obviously
$$\text{det}(L(\xi))=[a(\xi)]^{11}.$$
Notice that $\frac{||X||^2}{N^2}<1$ by $(2.2)$ and hence,
$$a(\xi)=|\xi|^2-\langle\frac{X}{N},\xi\rangle^2\geq |\xi|^2-\frac{||X||^2}{N^2}|\xi|^2>0,$$
Therefore, the interior operator $L$ is properly elliptic. 
\\
%%%%%%%%%%%%%%%%%%%%%%%%%%%%%%%%%%%%%%%%%%%%%%%%%%%%%%%%%%%%%%%%%%%%%%%%%%%%%%%%%%%%%%%%%%%%%%%%%%%%%%%%%%%%%%%%%%%%%%%%
\subsection{Complementing boundary condition}~~~

The complementing boundary condition is defined as (cf.[ADN]): 
\medskip

\emph{Let $L^*(\xi)$ be the adjoint matrix of $L(\xi)$ and set $\xi=\eta+z\mu$. The rows of the matrix $[B\cdot L^*](\eta+z\mu)$ are linearly independent modulo $l^+(z)=\prod (z-z_k)$, where $\{z_k\}$ are the roots of $detL(\eta+z\mu)=0$ having positive imaginary parts.}
\medskip
\\Since the principal symbol of $L$ is the identity matrix (up to a scalar) as shown in $(4.1)$, the complementing condition will hold as long as the boundary matrix $B(\eta+z\mu)$ is non-degenerate when $z$ is a root of $\text{det}L(\eta+z\mu)=0$ with positive imaginary part.

The linearization of the boundary operator $\mathcal B$ at $(\tilde g^{(4)},0)$ is given by,
\begin{equation}
\begin{split}
\mathcal B:T\mathcal S\times C^{m,\alpha}_{\delta}(M)&\rightarrow \mathbb B\\
D\mathcal B(h^{(4)},G)= (~&h_{\partial M}\\
&(H_{\partial M})'_{h^{(4)}}+O_0(G)\\
&tr_{\partial M}K'_{h^{(4)}}+O_0(G)\\
&(\omega_{\mathbf n})'_{h^{(4)}}+Nd_{\partial M}G+O_0(G)\\
&\beta_{ g^{(4)}}h^{(4)}~).
\end{split}
\end{equation}
Here we use the notation $T'_{h^{(4)}}$ to denote the variation of the tensor $T$ with respect to the deformation $h^{(4)}$. Notice that at $(\tilde g^{(4)},0)$, $a=1,~b=0$. The formula $(3.3)$ of the scalar field $a$ involves only the $0-$order information of  $F$. Thus the 2nd and 3rd lines in the expression of $D\mathcal B$ above, which represent the linearization of Bartnik data $(aH_{\partial M}+btr_{\partial M}K)$ and $(atr_{\partial M}K+bH_{\partial M})$ at $(a=1,b=0)$, do not contain high order ($\geq 1$) derivatives of $G$. It is easy to check at $(a=1,b=0)$, the variation $(a^2d_{\partial M}b/a\big)'_G=Nd_{\partial M}G+O_0(G),$
which contributes to the fourth line in $D\mathcal B$.

Based on $(4.3)$, the principal symbol of $\mathcal B$ is of the form:
\begin{equation}
B(\xi)=
\begin{bmatrix}
0_{3\times 8}&
\resizebox{.08\textwidth}{!}{$
\begin{matrix}
1&0&0\\
0&1&0\\
0&0&1
\end{matrix}
$}
\\
\tilde B_{8\times8}&*
\end{bmatrix}.
\end{equation}
Notice that $B(\xi)$ is a $11\times11$ matrix, since the boundary terms in (4.3) contain 11 equations in total and 11 (ordered) unknowns. Here for a simple expression of the boundary matrix, the unknown vector $(h^{(4)},G)$ is particularly arranged in the order of:
$$(G,h^{(4)}_{00},h^{(4)}_{01},h^{(4)}_{02},h^{(4)}_{03},h^{(4)}_{11},h^{(4)}_{12},h^{(4)}_{13},h^{(4)}_{22},h^{(4)}_{23},h^{(4)}_{33}).$$
Obviously, the first boundary term $h_{\partial M}=h^{(4)}_{ij},~(2\leq i\leq j\leq3)$. Thus the first three rows of $B$ in (4.4) contain only zeros in the first eight columns and a $3\times 3$ identity matrix at the end. The remaining eight rows of $B$ represent the symbol of 2nd-5th boundary terms in (4.3), with $\tilde B_{8\times 8}$ denoting the first eight columns which are determined by the $G$ and $h^{(4)}_{\alpha\beta}~~(0\leq\alpha\leq1,~\alpha\leq\beta\leq 3)$ components of the corresponding boundary terms. Detailed calculation given in \S 7.3 shows that the matrix $\tilde B_{8\times 8}$ is given by
$$\tilde B_{8\times 8}=-\frac{1}{2N^2}[(\hat B_1)_{8\times 1}~ (\hat B_2)_{8\times 4}~(\hat B_3)_{8\times 3}].$$
Here the scalar $-\frac{1}{2N^2}$ is for the purpose that the matrix following it can be written in a simpler way. Since $G$ only appears as $Nd_{\partial M}G$ in the fourth boundary term in (4.3), it is easy to derive that the first column of $\tilde B$ is given by
$$
\hat B_1=
\begin{bmatrix}
0&0&-2N^3\xi_2&-2N^3\xi_3&0&0&0&0
\end{bmatrix}^t
$$
In the above, we have the extra factor $-2N^2$, because $-\frac{1}{2N^2}$ has been factored out from these rows, which contributes to the factor in the front of the expression of $\tilde B$.
Based on the symbol calculation in \S7.3, we obtain the 2nd-5th columns of $\tilde B$ as, 
$$
\hat B_2=\resizebox{.66\textwidth}{!}{$
\begin{bmatrix}
0&0&0&0\\
0&0&2N\xi_2&2N\xi_3\\
0&N\xi_2&N\xi_1&0\\
0&N\xi_3&0&N\xi_1\\
\xi_1&2S-2\xi_1X^1&-2\xi_1X^2&-2\xi_1X^3~\\
\xi_2&-2\xi_2X^1&2S-2\xi_2X^2&-2\xi_2X^3~\\
\xi_3&-2\xi_3X^1&-2\xi_3X^2&2S-2\xi_3X^3~\\
2S&2(N^2\xi_1-SX^1)&2(N^2\xi_2-SX^2)&2(N^2\xi_3-SX^3)
\end{bmatrix}$},
$$
and the last three columns are given by
$$
\hat B_2=\resizebox{.9\textwidth}{!}{$
\begin{bmatrix}
0&2N^2\xi_2&2N^2\xi_3\\
0&-2N\xi_2X^1&-2N\xi_3X^1\\
-N\xi_2X^1&N\xi_3X^3&-N\xi_2X^3\\
-N\xi_3X^1&-N\xi_3X^2&N\xi_2X^2\\
\xi_1X^1X^1+N^2\xi_1-2S X^1&\xi_1X^1X^2-2S X^2+2N^2\xi_2&\xi_1X^1X^3-2S X^3+2N^2\xi_3\\
\xi_2X^1X^1-N^2\xi_2&\xi_2X^1X^2-2S X^1+2N^2\xi_1&\xi_2X^1X^3\\
\xi_3X^1X^1-N^2\xi_3&\xi_3X^1X^2&\xi_3X^1X^3-2S X^1+2N^2\xi_1\\
0&0&0
\end{bmatrix}$},
$$
inside which $S=\xi_1X^1+\xi_2X^2+\xi_3X^3$. 

Obviously, to prove the complementing boundary condition, it suffices to verify that $\tilde B(\eta+z\mu)$ is nonsingular when $z$ is a root of $a(\eta+z\mu)$ in (4.2) with positive imaginary part.
We can simplify $\tilde B$ using elementary row and column operation of matrices (cf.$\S 7.4$ for the detailed calculations) and obtain an equivalent matrix $\hat B$ so that $\text{det}\tilde B=-\frac{1}{32N^{11}}\text{det}\hat B$. The matrix $\hat B(\xi)$ is given by,
\begin{equation}
\resizebox{.90\textwidth}{!}{$
\begin{bmatrix}
0&0&0&0&0&0&-\xi_2&-\xi_3\\
0&0&0&\xi_2&\xi_3&0&0&0\\
-2N^2\xi_2&0&0&\xi_1&0&0&\xi_1X^1+\xi_3X^3&-\xi_2X^3\\
-2N^2\xi_3&0&0&0&\xi_1&0&-\xi_3X^2&\xi_1X^1+\xi_2X^2\\
0&\xi_1&2S&0&0&2N^2\xi_1&-2S X^2&-2S X^3\\
0&\xi_2&0&2S&0&0&2N^2\xi_1&0\\
0&\xi_3&0&0&2S&0&0&2N^2\xi_1\\
0&S&N^2\xi_1+SX^1&SX^2&SX^3&N^2S+N^2\xi_1X^1&0&0
\end{bmatrix}$}.
\end{equation}
Computing the determinant of the matrix above gives
$$\text{det}(\hat B)(\xi)=8N^8(\xi_1^2-\frac{S^2}{N^2})^2(\xi_1^2+\xi_2^2)^2.$$
If $\xi=\eta+z\mu$, then 
$$\text{det}(\hat B)(\eta+z\mu)=8N^8(z^2-\frac{\langle X,\eta+z\mu\rangle^2}{N^2})^2|\eta|^4.$$
If $z$ is a complex root of $a(\eta+z\mu)=0$, then from (4.2) it follows,
$$|\eta+z\mu|^2-\frac{1}{N^2}\langle X,\eta+z\mu\rangle^2=0,$$ 
i.e. $|\eta|^2+z^2=\frac{\langle X,\eta+z\mu\rangle^2}{N^2}$, and thus
\begin{equation*}
\begin{split}
\text{det}(\tilde B)(\eta+z\mu)&=8N^8(z^2-\frac{\langle X,\eta+z\mu\rangle^2}{N^2})^2|\eta|^4\\
&=8N^8(z^2-z^2-|\eta|^2)^2|\eta|^4\\
&=8N^8|\eta|^8,
\end{split}
\end{equation*}
which is obviously nonzero for $\eta\neq 0$. Thus the complementing boundary condition holds. This finishes the proof of Theorem 4.1. 
 \end{proof}
 
 It then follows from Theorem 4.1 that the linearization of BVP (3.2) is elliptic, which further implies that the boundary map $\tilde \Pi$ defined in \S3 is Fredholm -- the linearization $D\tilde \Pi$ is a Fredholm operator at $(\tilde g^{(4)},0)$. Now according to the Theorem 3.2, we can conclude that Theorem 1.1 is true on condition of the Assumption 3.1, i.e.
 \begin{theorem}
 If $\tilde g^{(4)}$ is a stationary vacuum spacetime metric such that the Assumption 3.1 holds, then the moduli space $\mathbb E$ admits smooth Banach manifold structure near $[\tilde g^{(4)}]$, and the boundary map $\Pi$ is Fredholm at $[\tilde g^{(4)}]$. 
 \end{theorem}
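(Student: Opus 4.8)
The plan is to assemble the statement from Theorems 3.2, 3.3 and 4.1. First I would dispose of the manifold claim: Theorem 3.3 gives that $\mathcal C$ carries a smooth Banach manifold structure near $(\tilde g^{(4)},0)$, and Theorem 3.2 says that $\mathcal P:\mathcal C\to\mathbb E$ restricts to a diffeomorphism of a neighborhood of $(\tilde g^{(4)},0)$ onto a neighborhood of $[\tilde g^{(4)}]$. Transporting the chart of $\mathcal C$ through $\mathcal P$ then equips $\mathbb E$ with a smooth Banach manifold structure near $[\tilde g^{(4)}]$.

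For the Fredholm claim, the main step is to upgrade the Agmon--Douglis--Nirenberg ellipticity of $D\mathcal F$ at $(\tilde g^{(4)},0)$, established in Theorem 4.1, to Fredholmness of
\[
D\mathcal F:\ T_{\tilde g^{(4)}}\mathcal S\times C^{m,\alpha}_\delta(M)\ \longrightarrow\ S^{m-2,\alpha}_{\delta+2}(V^{(4)})\times C^{m-2,\alpha}_{\delta+2}(M)\times\mathbb B
\]
as a bounded linear map of Banach spaces. Over a compact manifold this is immediate from the ADN a priori estimates; here $M$ is noncompact, so one must in addition identify the operator at spatial infinity with a Fredholm model on the weighted spaces. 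This is unproblematic: by the symbol computation in \S4.1 the principal part of $D\mathcal L$ is $a(\xi)I_{11\times11}$ with $a(\xi)\to|\xi|^2$ as the shift vector $X\to 0$, so $D\mathcal L$ is asymptotic to the flat vector Laplacian coupled with the scalar Laplacian for $G$, a model operator with the standard Fredholm mapping properties on $C^{m,\alpha}_\delta\to C^{m-2,\alpha}_{\delta+2}$ in the admissible weight range $\tfrac{1}{2}<\delta<1$; the boundary operator $\mathcal B$ is supported on the compact $\partial M$, and by \S4.2 the modified Bartnik conditions together with the Dirichlet gauge condition complement $D\mathcal L$. Hence the weighted-space Fredholm theory for asymptotically flat elliptic boundary value problems developed in [Az] applies here as well — the decoupled equation $\Delta F=0$ carries the same scalar symbol $a(\xi)$ and the algebraically modified boundary conditions do not alter the asymptotics — and yields that $D\mathcal F$ is Fredholm.

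Next I would deduce Fredholmness of $D\tilde\Pi$ at $(\tilde g^{(4)},0)$. Write $\mathcal B=(\mathcal B^{\mathrm{Bar}},\mathcal B^{\mathrm g})$ for the Bartnik and gauge parts of $\mathcal B$. By Assumption 3.1 and the openness argument of \S3, near $(\tilde g^{(4)},0)$ one has $\mathcal C=\{(g^{(4)},F):\mathcal L(g^{(4)},F)=0,\ \mathcal B^{\mathrm g}(g^{(4)},F)=0\}$, so that $T_{(\tilde g^{(4)},0)}\mathcal C=(D\mathcal F)^{-1}(W)$ with $W=\{0\}\times\{0\}\times\mathbf B\times\{0\}$, and under this identification $D\tilde\Pi$ is the restriction $D\mathcal F|_{T\mathcal C}:T\mathcal C\to W\cong\mathbf B$. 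Now I would invoke the elementary fact that if $T:\mathcal X\to\mathcal Y$ is Fredholm and $W\subset\mathcal Y$ is a closed subspace, then $T|_{T^{-1}(W)}:T^{-1}(W)\to W$ is Fredholm: its kernel equals $\ker T$, hence is finite dimensional, and its range is $W\cap\operatorname{Ran}T$, which is closed, being the intersection of two closed subspaces, and of finite codimension in $W$ since $w+(W\cap\operatorname{Ran}T)\mapsto w+\operatorname{Ran}T$ embeds $W/(W\cap\operatorname{Ran}T)$ into the finite dimensional space $\mathcal Y/\operatorname{Ran}T$. Applying this with $T=D\mathcal F$ shows $D\tilde\Pi$ is Fredholm. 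Finally, since $\tilde\Pi=\Pi\circ\mathcal P$ and $\mathcal P$ is a local diffeomorphism, $D\Pi=D\tilde\Pi\circ(D\mathcal P)^{-1}$ is Fredholm at $[\tilde g^{(4)}]$.

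The step I expect to be the genuine obstacle is the passage from ADN ellipticity to Fredholmness in the weighted H\"older spaces over the noncompact exterior region: the ADN criterion by itself yields only interior and boundary Schauder estimates, and closedness of the range together with finiteness of the deficiency indices requires control at infinity, i.e. a precise identification of the asymptotic model operator and a citation of its Fredholm property on $C^{m,\alpha}_\delta$ in the admissible weight range. This is exactly the machinery set up in [Az] for the stationary vacuum operator, and checking that the extra harmonic equation and the algebraically perturbed Bartnik and gauge boundary conditions leave the asymptotics intact is routine; still, it is the one place where something beyond \S\S3--4 is genuinely needed.
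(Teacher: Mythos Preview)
Your proposal is correct and follows the same architecture as the paper's own argument: assemble the manifold structure from Theorems~3.2 and~3.3, derive Fredholmness of $D\tilde\Pi$ from the ellipticity of $D\mathcal F$ in Theorem~4.1, and then transfer to $D\Pi$ via $\tilde\Pi=\Pi\circ\mathcal P$. The paper's treatment of Theorem~4.2 is in fact a single terse paragraph asserting exactly this chain of implications; your write-up is considerably more explicit, in particular the restriction lemma for Fredholm maps and the discussion of the asymptotic model operator, and you have correctly isolated the one substantive technical input beyond \S\S3--4, namely the weighted-space Fredholm theory over the noncompact end, which the paper absorbs into the citation of [Az] and the references [LM],[Mc],[MV] in \S7.1.
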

 To conclude this section, recall that in \S 3 we show that $\mathcal C$ can be interpreted geometrically as a local coordinate chart of the moduli space $\mathbb E$, and the map $\tilde\Pi$ is exactly the map $\Pi$ expressed in this chart. However, such a local chart is effective only if Assumption 3.1 holds.  In the following section, we will develop an alternative local chart at a reference metric $\tilde g^{(4)}\in\mathcal E$ where Assumption 3.1 fails. Furthermore, we show that the ellipticity result still holds in this case.

\begin{remark}
The operator $\beta_{\tilde g^{(4)}}\delta^*_{\tilde g^{(4)}}$ with Dirichlet boundary condition is elliptic and self-adjoint. This is shown in \S5.1 using the quotient formalism of stationary spacetimes. When expressed on the quotient manifold $(S,g_S)$ (cf.(5.9)), this operator is in the form of the Laplace operator plus lower order terms -- especially nontrivial 0-order terms generated by the twist tensor of the metric. If the spacetime metric $\tilde g^{(4)}$ is static, then the twist tensor is zero and the operator $\beta_{\tilde g^{(4)}}\delta^*_{\tilde g^{(4)}}$ reduces to the Laplacian, which is invertible. However, if the metric is not static, the 0-order terms in the operator are not necessarily vanishing or positive, so they may result in a nontrivial kernel of the operator, in which case Assumption 3.1 might fail. On the other hand, because of ellipticity and self-adjointness, this operator must be invertible at least for generic metrics in the space $\mathcal E$. It would be interesting to understand whether invertibility holds for all $g^{(4)}\in\mathcal E$.
\end{remark}
%%%%%%%%%%%%%%%%%%%%%%%%%%%%%%%%%%%%%%%%%%%%%%%%%%%%%%%%%%%%%%%%%%%%%%%%%%%%%%%%%%%%%%%%%%%%%%%%%%%%%%%%%%%%%%%%%%%%%%%%%%%%%%%%%%%%%%%%
%%%%%%%%%%%%%%%%%%%%%%%%%%%%%%%%%%%%%%%%%%%%%%%%%%%%%%%%%%%%%%%%%%%%%%%%
\section{Alternative charts}
In this section, we assume that $\tilde g^{(4)}$ is a fixed stationary vacuum metric where Assumption 3.1 fails.
\subsection{Perturbation of the metric}$~~$

We will use the projection formalism of stationary spacetimes (cf.[Kr],[G]) in this subsection. 
In a globally hyperbolic stationary spacetime $(V^{(4)},g^{(4)})$, the Killing vector field $\partial_t$ generates an isometric and proper $\mathbb R-$action on the spacetime. Let $S$ be the orbit space of this action, i.e. $S=V^{(4)}/\mathbb R$. Then $S$ is a smooth 3-manifold and inherits a Riemannian metric $g_S$, which is the restriction of the metric $g^{(4)}$ to the horizontal distribution --- the orthogonal complement of $\text{span}\{\partial_t\}$ in $TV^{(4)}$. It is shown in [G] that there is a one-to-one correspondence between tensor fields $T^{'b...d}_{a...c}$ on $S$ and tensor fields $T^{b...d}_{a...c}$ on $V^{(4)}$ which satisfy
\begin{equation*}
(\partial_t)^aT^{b...d}_{a...c}=0,...,(\partial_t)_dT^{b...d}_{a...c}=0\text{ and }L_{\partial_t}T^{b...d}_{a...c}=0.
\end{equation*}
In the following we will identify tensor fields being on $S$ as tensor fields in $V^{(4)}$ satisfying the conditions above. For example, a vector field $X'\in TS$ corresponds to a vector field $X$ in $V^{(4)}$ such that $\langle X,\partial_t\rangle_{g^{(4)}}=0$ and $L_{\partial_t}X=0$. So we will drop the prime -- identify $X'$ as $X$.

In the projection formalism, any stationary spacetime metric $g^{(4)}$ is globally of the form
\begin{equation*}
g^{(4)}=-u^2(dt+\theta)^2+g_S.
\end{equation*}
Here $g_S$ is the metric on $S$, which can also be interpreted as a time-independent tensor field in $V^{(4)}$ as discussed above, and $\theta$ is a 1-form on $S$ (in the same sense) such that $-u^2(dt+\theta)$ is the dual of $\partial_t$ with respect to $g^{(4)}$.
\begin{remark}
In our case $V^{(4)}\cong\mathbb R\times M$, there is a natural diffeomorphism between the quotient manifold $S$ and the hypersurface $M=\{t=0\}$. Thus we can pull back the radius function on $M$ to $S$ and define weighted H\"older spaces of tensor fields on $S$ similarly as in \S7.1. So if a vector field $Y\in T^{m,\alpha}_{\delta}(V^{(4)})$ satisfies $\langle Y,\partial_t\rangle_{g^{(4)}}=0$ and $L_{\partial_t}Y=0$, then it can be identified with a tensor field $Y\in T^{m,\alpha}_{\delta}(S)$
\end{remark}

Suppose that in the projection formalism, $\tilde g^{(4)}$ is expressed as,
\begin{equation}
\tilde g^{(4)}=-u^2(dt+\theta)^2+g_S.
\end{equation}
Take a smooth curve (parametrized by $\epsilon$) of perturbations of $\tilde g^{(4)}$ given by,
\begin{equation}g^{(4)}_{\epsilon}=\tilde g^{(4)}+\epsilon(dt+\theta)^2.
\end{equation}
First we prove the following property of this family of metrics.
\begin{proposition}
The metric $g_{\epsilon}^{(4)}$ is Bianchi-free with respect to $\tilde g^{(4)}$, i.e.
\begin{equation*}
\beta_{\tilde g^{(4)}}g^{(4)}_{\epsilon}=0.
\end{equation*}
\end{proposition}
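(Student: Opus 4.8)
The plan is to compute $\beta_{\tilde g^{(4)}}$ applied to the difference $g^{(4)}_\epsilon - \tilde g^{(4)} = \epsilon(dt+\theta)^2$ directly. Since $\beta_{\tilde g^{(4)}} = \delta_{\tilde g^{(4)}} + \tfrac12 d\,\mathrm{tr}_{\tilde g^{(4)}}$ is an affine operator in its argument and $\beta_{\tilde g^{(4)}}\tilde g^{(4)} = \delta_{\tilde g^{(4)}}\tilde g^{(4)} + \tfrac12 d(4) = 0$ (the Bianchi operator annihilates its own background metric), it suffices to show $\beta_{\tilde g^{(4)}}\big((dt+\theta)^2\big) = 0$, and then the claim follows for every $\epsilon$ at once. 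Write $\sigma = dt+\theta$, so that the tensor in question is $\sigma\otimes\sigma$. Note that $\sigma = -u^{-2}(\partial_t)^\flat$ with respect to $\tilde g^{(4)}$, i.e. $\sigma$ is (a multiple of) the metric dual of the Killing field; this is the structural fact that should make everything vanish.

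The key steps, in order: \textbf{(1)} Compute $\mathrm{tr}_{\tilde g^{(4)}}(\sigma\otimes\sigma) = |\sigma|^2_{\tilde g^{(4)}}$. Using $\tilde g^{(4)} = -u^2\sigma^2 + g_S$ and that $\sigma$ kills the horizontal distribution, one gets $\tilde g^{(4)}(\sigma^\sharp,\sigma^\sharp) = -u^{-2}$, so $|\sigma|^2_{\tilde g^{(4)}} = -u^{-2}$ and the term $\tfrac12 d\,\mathrm{tr}_{\tilde g^{(4)}}(\sigma\otimes\sigma) = -\tfrac12 d(u^{-2})$. \textbf{(2)} Compute $\delta_{\tilde g^{(4)}}(\sigma\otimes\sigma)$. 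Using the product rule for the divergence of a symmetric $2$-tensor, $\delta(\sigma\otimes\sigma) = -(\delta\sigma)\,\sigma - \nabla_{\sigma^\sharp}\sigma$ (with $\delta\sigma = -\mathrm{tr}\,\nabla\sigma$ the divergence of the $1$-form). Here I would exploit that $\partial_t$ is Killing: writing $\sigma = -u^{-2}(\partial_t)^\flat$ and using $\nabla_a(\partial_t)_b = \tfrac12\big((d(\partial_t)^\flat)_{ab}\big)$ (antisymmetric, by the Killing equation $\delta^*_{\tilde g^{(4)}}\partial_t = 0$), the symmetric part of $\nabla\sigma$ is controlled entirely by $u$ and its derivatives. \textbf{(3)} Add the two contributions and check cancellation: the $\nabla_{\sigma^\sharp}\sigma$ piece and $-\tfrac12 d(u^{-2})$ should combine, using $\sigma^\sharp = -u^{-2}\partial_t$ and $\partial_t(u) = 0$ (stationarity), while the $(\delta\sigma)\sigma$ term should vanish because $\delta_{\tilde g^{(4)}}\sigma = -u^{-2}\,\delta_{\tilde g^{(4)}}(\partial_t)^\flat + (\text{terms with }\partial_t u) = 0$ — the divergence of the dual of a Killing field vanishes, again by the Killing equation. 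A cleaner route for step (2)–(3): observe $\sigma\otimes\sigma = u^{-4}(\partial_t)^\flat\otimes(\partial_t)^\flat$, and for any function $\varphi$ and Killing field $W$ one has $\delta(\varphi\, W^\flat\otimes W^\flat) = -\varphi\,\nabla_{W^\sharp}W^\flat - W(\varphi)W^\flat - \varphi(\delta W^\flat)W^\flat$; here $\delta W^\flat = 0$, $W(\varphi)=0$ since $\varphi=u^{-4}$ is time-independent, and $\nabla_{W^\sharp}W^\flat = \tfrac12 d(|W|^2) = \tfrac12 d(-u^2)$, so $\delta(\sigma\otimes\sigma) = -u^{-4}\cdot\tfrac12 d(-u^2) = \tfrac12 u^{-4}d(u^2) = \tfrac12 d(u^{-2})\cdot(\text{check sign})$, matching exactly $-\tfrac12 d\,\mathrm{tr}_{\tilde g^{(4)}}(\sigma\otimes\sigma)$ from step (1), with opposite sign, giving $\beta_{\tilde g^{(4)}}(\sigma\otimes\sigma)=0$.

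The main obstacle I anticipate is bookkeeping of signs and the $u^{-2}$ versus $u^2$ factors coming from the Lorentzian (time-like) normalization $\tilde g^{(4)}(\sigma^\sharp,\sigma^\sharp) = -u^{-2}$; it is easy to lose a factor of $2$ or a sign in the identity $\nabla_{W^\sharp}W^\flat = \tfrac12 d(|W|^2_{\tilde g^{(4)}})$ for a Killing field, and in relating $\mathrm{tr}$ of $\sigma\otimes\sigma$ to $|W|^2$. A careful intrinsic derivation using only (i) the Killing equation for $\partial_t$, (ii) time-independence of $u$, and (iii) $|\partial_t|^2_{\tilde g^{(4)}} = -u^2$, avoids choosing coordinates and makes the cancellation transparent. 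No regularity issues arise since all fields are smooth and the perturbation is an explicit algebraic expression in the data of $\tilde g^{(4)}$.
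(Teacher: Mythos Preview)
Your approach is essentially the same as the paper's: reduce to showing $\beta_{\tilde g^{(4)}}\bigl((dt+\theta)^2\bigr)=0$, compute $\mathrm{tr}_{\tilde g^{(4)}}(dt+\theta)^2=-u^{-2}$ so that $\tfrac12 d\,\mathrm{tr}=u^{-3}du$, and then show $\delta_{\tilde g^{(4)}}(dt+\theta)^2=-u^{-3}du$ via the Killing property of $\partial_t$. The paper carries out the divergence step by a direct frame computation using $\nabla_{\partial_t}\partial_t=u\nabla u$ (equivalently $\nabla_{\partial_t}\xi=u\,du$), which is exactly your Killing identity in disguise; the only thing to fix in your write-up is the sign you flagged: for a Killing field $W$ one has $\nabla_W W^\flat=-\tfrac12\,d\bigl(|W|^2\bigr)=+u\,du$, giving $\delta(\sigma\otimes\sigma)=-u^{-4}\cdot u\,du=-u^{-3}du$ and hence the desired cancellation.
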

\begin{proof}
Clearly by (5.2),
$$\beta_{\tilde g^{(4)}}g_{\epsilon}^{(4)}=\epsilon\beta_{\tilde g^{(4)}}(dt+\theta)^2.$$
Let
\begin{equation}\alpha=(dt+\theta),\end{equation}
then $\alpha=-\frac{1}{u^2}\xi$, where $\xi=-u^2(dt+\theta)$ is the dual of $\partial_t$. Obviously $\alpha(\partial_t)=1$, $\alpha(v)=0,~\forall v\in TS$, and hence $tr_{\tilde g^{(4)}}\alpha^2=-u^{-2}$. As a result, \begin{equation}
\begin{split}
\beta_{\tilde g^{(4)}}(\alpha^2)&=\delta_{\tilde g^{(4)}}(\alpha^2)+\frac{1}{2}d(tr_{\tilde g^{(4)}}\alpha^2)=\delta_{\tilde g^{(4)}}(\alpha^2)+u^{-3}du.
\end{split}
\end{equation}
For the divergence term above, we have
\begin{equation}
\begin{split}
\delta_{\tilde g^{(4)}}(\alpha^2)&=-\frac{1}{u^2}\{-\nabla_{\partial_t}[\alpha^2(\partial_t)]+\alpha^2(\nabla_{\partial_t}\partial_t)\}\\
&=\frac{1}{u^2}\nabla_{\partial_t}\alpha=-\frac{1}{u^2}\nabla_{\partial_t}(\frac{1}{u^2}\xi)\\
&=-\frac{1}{u^4}\nabla_{\partial_t}\xi=-u^{-3}d u.
\end{split}
\end{equation}
In the first equality above, we have used the fact that $\nabla_{\partial_t}\partial_t=u\nabla u$ (cf.\S7.5), which gives a vector field on $S$ and so $\alpha(\nabla_{\partial_t}\partial_t)=0$. In the last equality, trivially we have $\nabla_{\partial_t}\xi=udu$.  Equations (5.4) and (5.5) now imply that $\alpha^2$ is Bianchi-free.
\end{proof}
In addition to Bianchi-free, the family of metrics $g_{\epsilon}^{(4)}$ possesses another property --- for generic $\epsilon$ the operator $\beta_{\tilde g^{(4)}}\delta^*_{g_{\epsilon}^{(4)}}$ is invertible, in the following sense:   
\begin{proposition} In any neighborhood $I$ of $0$, there is an $\epsilon\in I$ such that the BVP with unknown $Y\in T^{m,\alpha}_{\delta}(V^{(4)})$ given by,
\begin{equation}
	\begin{cases}
		\beta_{\tilde g^{(4)}}\delta^*_{g_{\epsilon}^{(4)}}Y=0\quad\text{on }S\\
		Y=0\quad\text{on }\partial S
	\end{cases}
\end{equation}
has only the trivial solution $Y=0$.
\end{proposition}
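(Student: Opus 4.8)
\smallskip
\noindent\textit{Proof strategy.} The plan is to view $(5.6)$ as a \emph{linear pencil} in the parameter $\epsilon$ and then apply the analytic Fredholm theorem, the only real work being to exclude the degenerate alternative in which the kernel persists for every small $\epsilon$.

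First I would record the pencil structure. Since $g_{\epsilon}^{(4)}=\tilde g^{(4)}+\epsilon\,\alpha^{2}$ with $\alpha=dt+\theta$ (cf.\ $(5.3)$), and since $\delta^{*}_{g}Y=\tfrac12 L_{Y}g$ depends linearly on $g$, one has $\delta^{*}_{g_{\epsilon}^{(4)}}Y=\delta^{*}_{\tilde g^{(4)}}Y+\tfrac{\epsilon}{2}L_{Y}\alpha^{2}$, and hence
\begin{equation*}
\beta_{\tilde g^{(4)}}\,\delta^{*}_{g_{\epsilon}^{(4)}}=T_{0}+\epsilon\,S,\qquad T_{0}:=\beta_{\tilde g^{(4)}}\,\delta^{*}_{\tilde g^{(4)}},\qquad SY:=\tfrac12\,\beta_{\tilde g^{(4)}}\!\big(L_{Y}\alpha^{2}\big).
\end{equation*}
Thus $\epsilon\mapsto T_{\epsilon}:=T_{0}+\epsilon S$ is an operator polynomial, in particular real-analytic, and $(5.6)$ is exactly the Dirichlet problem for $T_{\epsilon}$ on $S$ (equivalently on $M$, by Remark 5.1). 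The Dirichlet problem for $T_{0}$ is elliptic, self-adjoint and Fredholm of index $0$ on the weighted H\"older spaces (cf.\ Remark 4.3). The operator $S$ is a second-order differential operator whose top-order coefficients are built from $\alpha^{2}$ and whose output gains two orders of decay (since $\nabla\alpha^{2}$ already decays), so $S:\mathcal T\to T^{m-2,\alpha}_{\delta+2}(V^{(4)})$ is bounded, where $\mathcal T=\{Y\in T^{m,\alpha}_{\delta}(V^{(4)}):Y=0\text{ on }\partial S\}$; consequently $T_{\epsilon}=T_{0}+\epsilon S$ is Fredholm of index $0$ for all $\epsilon$ in a neighbourhood $I$ of $0$ (one may also note directly that $\sigma_{2}(T_{\epsilon})(\xi)=-\tfrac12\abs{\xi}_{\tilde g^{(4)}}^{2}(\mathrm{Id}+\epsilon\,\alpha\otimes\alpha^{\sharp})$ stays invertible as long as $\epsilon$ misses the range of $u^{2}$, which holds near $0$ since $u^{2}>0$). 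Hence $T_{\epsilon}$ is invertible if and only if $\ker T_{\epsilon}=0$, and $(5.6)$ has only the trivial solution exactly when $\epsilon\notin Z:=\{\epsilon\in I:\ker T_{\epsilon}\neq0\}$.

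Applying the analytic Fredholm theorem to the analytic family $\epsilon\mapsto T_{\epsilon}$ of index-zero Fredholm operators on the connected set $I$ yields the dichotomy: either $Z=I$, or $Z$ is discrete in $I$. In the discrete case the proposition follows at once, any $\epsilon\in I\setminus Z$ near $0$ doing the job, so the crux is to exclude $Z=I$. I would do this by a Lyapunov--Schmidt reduction near $\epsilon=0$. Since Assumption 3.1 fails for $\tilde g^{(4)}$, $K_{0}:=\ker T_{0}\neq0$; writing $d=\dim K_{0}\geq1$ and using $\operatorname{coker}T_{0}\cong K_{0}$ (self-adjointness), solvability of $T_{\epsilon}Y=0$ near $\epsilon=0$ becomes equivalent to the vanishing of $\det\mathcal A(\epsilon)$, where $\mathcal A(\epsilon)$ is a $d\times d$ real-analytic matrix with $\mathcal A(0)=0$ and $\mathcal A(\epsilon)=\epsilon\,\mathcal A_{1}+O(\epsilon^{2})$, $(\mathcal A_{1})_{ij}=\langle SY_{i},Y_{j}\rangle_{L^{2}}$ in an $L^{2}$-orthonormal basis $\{Y_{i}\}$ of $K_{0}$. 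Thus $Z$ is discrete near $0$ unless $\det\mathcal A\equiv0$. To rule the latter out, suppose toward a contradiction that there were a nonzero real-analytic family $Y_{\epsilon}\in\ker T_{\epsilon}$ with $Y_{0}\neq0$; differentiating $T_{0}Y_{\epsilon}+\epsilon SY_{\epsilon}=0$ repeatedly at $\epsilon=0$ and using $\operatorname{Im}T_{0}=(\ker T_{0})^{\perp}$ forces $Y_{0}$, and then its higher jets, into a decreasing chain of subspaces of $K_{0}$ cut out by $\langle S\,\cdot\,,\,\cdot\,\rangle$ and its iterates. Since $Y|_{\partial S}=0$, integration by parts gives $\langle SY,Z\rangle_{L^{2}}=\tfrac12\langle L_{Y}\alpha^{2},\beta^{*}_{\tilde g^{(4)}}Z^{\flat}\rangle_{L^{2}}$ with no boundary contribution, and I would expand this using the projection-formalism identities $\alpha=-u^{-2}\xi$, $tr_{\tilde g^{(4)}}\alpha^{2}=-u^{-2}$, $\nabla_{\partial_{t}}\partial_{t}=u\nabla u$ from the proof of Proposition 5.2 to conclude that no nonzero $Y_{0}\in K_{0}$ can meet all of these conditions. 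This contradiction gives $\det\mathcal A\not\equiv0$, hence $Z$ is discrete, and the proposition follows.

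The step I expect to be the main obstacle is the last one: establishing $\det\mathcal A\not\equiv0$, i.e.\ that no nonzero element of $\ker T_{0}$ survives in $\ker T_{\epsilon}$ along an interval of parameters. The generic case, in which the leading obstruction matrix $\mathcal A_{1}=(\langle SY_{i},Y_{j}\rangle)$ is already nonsingular, is immediate from the above; the real content is controlling the higher-order obstructions and ruling out an accidental total degeneracy, which is precisely where the specific geometry of the perturbation direction $\alpha^{2}$, together with the self-adjointness of $T_{0}$, must be brought in.
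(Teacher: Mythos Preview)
Your overall framework matches the paper's: view $T_\epsilon=T_0+\epsilon S$ as an analytic family of self-adjoint Fredholm operators, assume for contradiction that the kernel persists on an interval, and obtain a smooth branch $Y(\epsilon)\in\ker T_\epsilon$ with $\tilde Y:=Y(0)\neq 0$. Differentiating once and using self-adjointness yields exactly your first-order obstruction $\langle S\tilde Y,\tilde Y\rangle_{L^2}=0$. Up to this point you and the paper agree.

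The gap is in how you close the argument. You propose to iterate: look at the matrix $\mathcal A_1=(\langle SY_i,Y_j\rangle)$, then higher obstructions, and hope the chain terminates. You correctly flag this as the hard part and leave it unproved. The paper does \emph{not} iterate. Instead it computes $\langle SY,Y\rangle$ explicitly in the projection formalism and finds, after the integration by parts you allude to together with formulas (5.12)--(5.13), that
\[
\langle SY,Y\rangle_{L^2}=\tfrac12\int_S\bigl\|\,d\theta(Y^T)-d(Y^\perp/u)\,\bigr\|_{g_S}^2\,u\,dvol_{g_S}\ \ge\ 0.
\]
Thus the first-order obstruction is a \emph{definite} quadratic form, and its vanishing at $\tilde Y$ gives the pointwise identity $d\theta(\tilde Y^T)=d(\tilde Y^\perp/u)$. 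This single constraint is then fed back into the \emph{original} equation $T_0\tilde Y=0$, written in the projection decomposition (5.9). The coupling terms $\tfrac12u^2d\theta(d\theta(\tilde Y^T))-u^2d\theta(\nabla(\tilde Y^\perp/u))$ collapse to $-\tfrac12u^2d\theta(d\theta(\tilde Y^T))$, leaving an equation for $\tilde Y^T$ alone; pairing with $\tilde Y^T$ produces a differential inequality for $\|\tilde Y^T\|^2$ to which the maximum principle applies (with $\tilde Y^T=0$ on $\partial S$ and at infinity), forcing $\tilde Y^T=0$ and then $\tilde Y^\perp=0$.

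So the missing idea in your proposal is twofold: (i) the quadratic form $\langle SY,Y\rangle$ is nonnegative, so no higher-order Lyapunov--Schmidt obstructions are needed; and (ii) the resulting pointwise constraint must be combined with the full system $T_0\tilde Y=0$ and a maximum principle, not with further perturbation data. Your ``decreasing chain of subspaces'' route would not by itself reach a contradiction without these two ingredients.
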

To prove this proposition, we state the following lemma first. 
\begin{lemma}
 The BVP (5.6) is elliptic (for $\epsilon$ small) and formally self-adjoint. 
\end{lemma}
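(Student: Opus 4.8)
The plan is to verify the two separate claims in Lemma~5.8 --- ellipticity for small $\epsilon$, and formal self-adjointness --- by computing the principal symbol and the leading-order structure of the operator $\beta_{\tilde g^{(4)}}\delta^*_{g_\epsilon^{(4)}}$, then comparing with the $\epsilon=0$ case which is already known to be elliptic and self-adjoint (this is asserted in the Remark preceding the excerpt and used in \S3--\S4). First I would compute the principal symbol $\sigma(\beta_{\tilde g^{(4)}}\delta^*_{g_\epsilon^{(4)}})(\xi)$. Since $\delta^*_{g_\epsilon^{(4)}} Y = \tfrac12 L_Y g_\epsilon^{(4)}$, at the symbol level $\sigma(\delta^*_{g_\epsilon^{(4)}})(\xi)$ does not actually depend on the metric (it is just symmetrization of $\xi \otimes Y$), and $\sigma(\beta_{\tilde g^{(4)}})(\xi)$ depends only on the fixed reference $\tilde g^{(4)}$. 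Hence the leading symbol of the composition is \emph{independent of $\epsilon$}: it equals the symbol of $\beta_{\tilde g^{(4)}}\delta^*_{\tilde g^{(4)}}$, which is (a nonzero multiple of) $|\xi|^2_{\tilde g^{(4)}}\,\mathrm{Id}$ --- the standard fact that $\beta\delta^* = \tfrac12(\nabla^*\nabla + \text{lower order})$ on vector fields, as in the Bochner/Bianchi identity. So ellipticity holds for \emph{every} $\epsilon$, not just small $\epsilon$; the phrase ``for $\epsilon$ small'' in the lemma is presumably just to keep $g_\epsilon^{(4)}$ inside the Lorentzian cone $N^2 > \|X\|^2$. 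I would also note that the Dirichlet boundary condition $Y=0$ trivially satisfies the complementing (Lopatinski--Shapiro) condition for a second-order Laplace-type system, so the BVP (5.6) is elliptic in the ADN sense.

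Next, for formal self-adjointness, the clean argument is via an integration-by-parts (Green's) identity. For any two vector fields $Y, Z$ vanishing on $\partial S$, I would show
\begin{equation*}
\int_S \langle \beta_{\tilde g^{(4)}}\delta^*_{g_\epsilon^{(4)}} Y,\, Z\rangle_{g_\epsilon^{(4)}}\, dV = \int_S \langle Y,\, \beta_{\tilde g^{(4)}}\delta^*_{g_\epsilon^{(4)}} Z\rangle_{g_\epsilon^{(4)}}\, dV,
\end{equation*}
with no boundary terms. The natural route is to observe that, because $g_\epsilon^{(4)} = \tilde g^{(4)} + \epsilon \alpha^2$ with $\alpha = dt+\theta$ and $\tilde g^{(4)}$ is stationary vacuum (so $\beta_{\tilde g^{(4)}}g_\epsilon^{(4)}=0$ by Proposition~5.5 and $\beta_{\tilde g^{(4)}}\tilde g^{(4)}=0$), the operator $\delta^*_{g_\epsilon^{(4)}}$ and the operator $\beta_{\tilde g^{(4)}}$ are (up to the precise metric used in pairings and divergences) formal adjoints of one another in a compatible inner product. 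More concretely: $\delta_{\tilde g^{(4)}}$ is the formal $L^2(\tilde g^{(4)})$-adjoint of $\delta^*_{\tilde g^{(4)}}$, and $\beta_{\tilde g^{(4)}} = \delta_{\tilde g^{(4)}} + \tfrac12 d\,\mathrm{tr}_{\tilde g^{(4)}}$ differs from $\delta_{\tilde g^{(4)}}$ by a term whose adjoint-compatibility with $\delta^*$ follows from the trace identity $\mathrm{tr}_{\tilde g^{(4)}} \delta^*_{\tilde g^{(4)}} Y = -\delta_{\tilde g^{(4)}} Y^\flat$ (up to sign conventions). The subtlety is that the lemma writes $\delta^*_{g_\epsilon^{(4)}}$ (with respect to $g_\epsilon^{(4)}$) but $\beta_{\tilde g^{(4)}}$ (with respect to $\tilde g^{(4)}$), so I would either (a) reduce to the quotient manifold $(S, g_S)$ via the projection formalism of \S5.1, where $\partial_t$-invariance makes all the integrals ordinary integrals over $S$ and one can track precisely how the $\epsilon\alpha^2$ term enters, and check that it only contributes symmetric (in $Y,Z$) zeroth- and first-order terms; or (b) argue abstractly: the operator is elliptic and agrees with its formal transpose up to lower order, and a direct computation shows the would-be antisymmetric part vanishes because $\alpha$ is closed-up-to-$\theta$ and $\tilde g^{(4)}$-parallel along $\partial_t$ (the identities $\nabla_{\partial_t}\partial_t = u\nabla u$ and $\nabla_{\partial_t}\xi = u\,du$ used in the proof of Proposition~5.5).

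The main obstacle I expect is \textbf{tracking the mixed-metric structure in the self-adjointness computation}: the operator pairs a $\delta^*$ taken with respect to $g_\epsilon^{(4)}$ against a $\beta$ taken with respect to the fixed $\tilde g^{(4)}$, and one must be careful about which volume form and which inner product make the integration by parts produce a symmetric expression with vanishing boundary terms. The cleanest resolution is to push everything down to the quotient $(S, g_S)$ as in \S5.1 --- there the $\mathbb{R}$-invariance collapses the four-dimensional integrals to integrals over the compact-with-boundary region in $S$, the Dirichlet condition $Y|_{\partial S}=0$ kills all boundary integrals arising from integration by parts, and the difference between $g_\epsilon^{(4)}$ and $\tilde g^{(4)}$ is the rank-one tensor $\epsilon\alpha^2$ whose effect on $\delta^*$ can be written out explicitly using $\alpha = -u^{-2}\xi$ and the connection identities already established. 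Ellipticity, by contrast, is essentially immediate once one notes the principal symbol is $\epsilon$-independent and equals that of $\beta_{\tilde g^{(4)}}\delta^*_{\tilde g^{(4)}}$; the ``$\epsilon$ small'' hypothesis only ensures $g_\epsilon^{(4)}$ remains Lorentzian of the right signature so that the slice formalism and the symbol positivity of \S4.1 apply.
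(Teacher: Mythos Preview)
Your ellipticity argument has a small but real inaccuracy: the principal symbol of $\delta^*_{g_\epsilon^{(4)}}$ acting on \emph{vector fields} is \emph{not} $\epsilon$-independent. Since $\delta^*_{g_\epsilon^{(4)}}Y=\tfrac12 L_Y g_\epsilon^{(4)}$, the leading part in coordinates is $(g_\epsilon)_{cb}\partial_aY^c+(g_\epsilon)_{ac}\partial_bY^c$, so the symbol involves index-lowering with $g_\epsilon^{(4)}$. Composing with $\beta_{\tilde g^{(4)}}$ one finds the total symbol is $\tfrac12|\xi|^2_{\tilde g^{(4)}}\cdot(g_\epsilon^{(4)})_{ab}Y^b$, which does depend on $\epsilon$. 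Your \emph{conclusion} survives (the symbol is invertible whenever $g_\epsilon^{(4)}$ is nondegenerate and $\xi$ is non-null on the slice), but the reasoning needs this correction, and the Bochner identity $\beta\delta^*=\tfrac12\nabla^*\nabla+\text{l.o.t.}$ you invoke only holds when both operators are taken with respect to the \emph{same} metric. The paper instead writes $\beta_{\tilde g^{(4)}}\delta^*_{g_\epsilon^{(4)}}=\beta_{\tilde g^{(4)}}\delta^*_{\tilde g^{(4)}}+\tfrac{\epsilon}{2}\beta_{\tilde g^{(4)}}L_Y\alpha^2$ and identifies the leading part of the $\epsilon=0$ piece as the Hodge Laplacian on the quotient $(S,g_S)$ via the explicit decomposition (5.9); ellipticity for small $\epsilon$ then follows by perturbation.

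For self-adjointness there are two genuine gaps. First, your proposed pairing $\langle\cdot,\cdot\rangle_{g_\epsilon^{(4)}}$ is the wrong one: the paper proves self-adjointness with respect to the \emph{fixed} $\tilde g^{(4)}$ inner product (equivalently, $u\cdot dvol_{g_S}$ on $S$), and this $\epsilon$-independence is essential for the downstream application of Kato's analytic perturbation theory in Proposition~5.3. With the $g_\epsilon^{(4)}$ pairing, $\beta_{\tilde g^{(4)}}$ is not the formal adjoint of anything natural. Second, your plan (a) is correct in outline but misses the structural fact that actually makes the $\epsilon$-term symmetric: one computes (the paper's (5.12)) that $L_Y\alpha^2$ has \emph{only} the mixed $\partial_t$--horizontal component, namely $[L_Y\alpha^2(\partial_t)]^T=d\theta(Y^T)-d(Y^\perp/u)$, with vanishing horizontal and $\partial_t\partial_t$ parts. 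Pairing this against $\delta^*_{\tilde g^{(4)}}Z+\tfrac12(\delta_{\tilde g^{(4)}}Z)\tilde g^{(4)}$ (the formal adjoint of $\beta_{\tilde g^{(4)}}$ applied to $Z$), only the corresponding mixed component of the latter survives, and that component is $-\tfrac{u^2}{2}[d\theta(Z^T)-d(Z^\perp/u)]$ --- manifestly producing an expression symmetric in $Y,Z$. Without isolating this rank-one structure of $L_Y\alpha^2$, the integration-by-parts scheme you sketch does not close.
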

\begin{proof}
Since $\delta^*_{g_{\epsilon}^{(4)}}Y=\frac{1}{2}L_{Y}g_{\epsilon}^{(4)}=\frac{1}{2}L_{Y}(\tilde g^{(4)}+\epsilon\alpha^2)=\delta^*_{\tilde g^{(4)}}Y+\frac{\epsilon}{2}L_Y\alpha^2$, one has,
\begin{equation}
\beta_{\tilde g^{(4)}}\delta^*_{g_{\epsilon}^{(4)}}Y=\beta_{\tilde g^{(4)}}\delta^*_{\tilde g^{(4)}}Y+\frac{\epsilon}{2}\beta_{\tilde g^{(4)}}L_Y\alpha^2,
\end{equation}
where $\alpha$ is as defined in (5.3). 

Notice that any time-independent vector field $Y$ in $V^{(4)}$ can be decomposed into a vector field on $S$ and another part proportional to $\partial_t$. Let $Y^{\perp}=u^{-1}\langle Y,\partial_t\rangle$ and $Y^T=Y+u^{-1}Y^{\perp}\partial_t$. Obviously, $Y^{\perp}$ is independent of $t$, so it can be taken as a function on $S$. It is also easy to verify that $\langle Y^T,\partial_t\rangle_{g^{(4)}}=0$ and $L_{\partial_t}Y^T=0$, i.e. $Y^T$ is a vector fields on $S$. Thus we have the decomposition 
\begin{equation}
Y=Y^T-u^{-1}Y^{\perp}\partial_t.
\end{equation}
We decompose the vector $\beta_{\tilde g^{(4)}}\delta^*_{\tilde g^{(4)}}Y$ on the right side of equation (5.7) in the same way as described above. It is shown in \S7.5 that the components are given by,
\begin{equation}
\begin{cases}
[\beta_{\tilde g^{(4)}}\delta^*_{\tilde g^{(4)}}Y]^T=(\nabla_{g_S})^*\nabla_{g_S}Y^T+u^{-2}Y^T(u)\nabla u
-u^{-1}(\nabla_{g_S})_{\nabla u}Y^T\\
\quad\quad\quad\quad\quad\quad\quad\quad\quad\quad\quad\quad\quad\quad+\frac{1}{2}u^2d\theta(d\theta(Y^T))-u^2d\theta(\nabla\frac{Y^{\perp}}{u})\\
~~~\\
-[\beta_{\tilde g^{(4)}}\delta^*_{\tilde g^{(4)}}Y]^{\perp}=-u\Delta_{g_S}(\frac{Y^{\perp}}{u})+3\langle\nabla\frac{Y^{\perp}}{u},\nabla u\rangle-\frac{1}{4}u^2Y^{\perp}|d\theta|^2
\\
\quad\quad\quad\quad\quad\quad\quad\quad\quad\quad\quad\quad\quad\quad\quad+u\langle d\theta,\nabla_{g_S}Y^T\rangle-d\theta(\nabla u,Y^T),
\end{cases}
\end{equation}
where $\nabla_{g_S}$ (and $\Delta_{g_S}$) denotes connection (and Laplace operator) of $g_S$.  Notice the leading terms of the operator in (5.9) are the Laplacian $[(\nabla_{g_S})^*\nabla_{g_S}Y^T]$ and $[-u\Delta_{g_S}(\frac{Y^{\perp}}{u})]$. Thus $\beta_{\tilde g^{(4)}}\delta^*_{\tilde g^{(4)}}$ is an elliptic operator with respect to the Dirichlet boundary condition, and so is the operator $\beta_{\tilde g^{(4)}}\delta^*_{g_{\epsilon}^{(4)}}$ (for $\epsilon$ small) according to the expression (5.7).

Let $Y_1,Y_2\in T^{m,\alpha}_{\delta}(V^{(4)})$ be two asymptotically zero time-independent vector fields which are vanishing along $\partial V^{(4)}$. Then, 
\begin{equation*}
\begin{split}
&\int_{S}\langle\beta_{\tilde g^{(4)}}\delta^*_{\tilde g^{(4)}}Y_1,Y_2\rangle_{g^{(4)}}\cdot u\cdot dvol_{g_S}\\
=&
\int_{S}\{\langle[\beta_{\tilde g^{(4)}}\delta^*_{\tilde g^{(4)}}Y_1]^T,Y_2^T\rangle_{g_S}+(-[\beta_{\tilde g^{(4)}}\delta^*_{\tilde g^{(4)}}Y_1]^{\perp})\cdot Y_2^{\perp}\}\cdot u\cdot dvol_{g_S}
\end{split}
\end{equation*} 
Substituting equations in (5.9) into the integral above and then integrating by parts gives,
\begin{equation}
\begin{split}
&\int_{S}\{\langle[\beta_{\tilde g^{(4)}}\delta^*_{\tilde g^{(4)}}Y_1]^T,Y_2^T\rangle_{g_S}+(-[\beta_{\tilde g^{(4)}}\delta^*_{\tilde g^{(4)}}Y_1]^{\perp})\cdot Y_2^{\perp}\}\cdot u\cdot dvol_{g_S}\\
&=\int_{S}\{\langle[\beta_{\tilde g^{(4)}}\delta^*_{\tilde g^{(4)}}Y_2]^T,Y_1^T\rangle_{g_S}+(-[\beta_{\tilde g^{(4)}}\delta^*_{\tilde g^{(4)}}Y_2]^{\perp})\cdot Y_1^{\perp}\}\cdot u\cdot dvol_{g_S}\\
&\quad+(\int_{\partial S}+\int_{\infty})[B(Y_2,Y_1)-B(Y_1,Y_2)],
\end{split}
\end{equation}
where $B(Y_2,Y_1)=u\langle\nabla_{\mathbf n}Y_2^T,Y_1^T\rangle]+2u^2d\theta(\mathbf n,Y_1^T)Y_2^{\perp}+u\mathbf n(Y_1^{\perp})Y_2^{\perp}$. It is obvious that the boundary integral on $\partial S$ is zero, since $Y_1,Y_2$ vanish on the boundary. The boundary term at infinity $\int_{\infty}=\lim_{r\to\infty}\int_{S_r}$, with $S_r$ denoting the sphere of radius $r$ on $S$. It is also zero because the decay rate of the bilinear form $B(Y_1,Y_2)$ is $2\delta+1>2$. (cf. Remark 5.1) Thus it follows that, the differential operator (5.9) is formally self-adjoint with respect to the measure $u\cdot dvol_{g_S}$ on $S$.

\medskip
$Remark.$ One has the following integration by parts formula in the spacetime $(V^{(4)},g^{(4)})$:
\begin{equation*}
\begin{split}
\int_{V^{(4)}}\langle \nabla_{\tilde g^{(4)}}^*\nabla_{\tilde g_{(4)}}Y_1,Y_2\rangle_{\tilde g^{(4)}}&dvol_{\tilde g^{(4)}}=
\int_{V^{(4)}}\langle \nabla_{\tilde g^{(4)}}^*\nabla_{\tilde g_{(4)}}Y_2,Y_1\rangle_{\tilde g^{(4)}}dvol_{\tilde g^{(4)}}\\
&+\int_{\partial V^{(4)}}\langle (\nabla_{\tilde g^{(4)}})_{\mathbf n}Y_2,Y_1\rangle_{\tilde g^{(4)}}-\langle (\nabla_{\tilde g^{(4)}})_{\mathbf n}Y_1,Y_2\rangle_{\tilde g^{(4)}}.
\end{split}
\end{equation*}
When the spacetime $(V^{(4)},\tilde g^{(4)})$ is stationary, the equation above reduces to the equation (5.10) on the quotient manifold $(S,g_S)$. 
\medskip

Using the same method as above, it is easy to check the following equality holds for any time-independent symmetric 2-tensor $h\in S^{m,\alpha}_{\delta+1}(V^{(4)})$ and vector field $Y\in T^{m,\alpha}_{\delta}(V^{(4)})$ with $Y|_{\partial V^{(4)}}=0$:
\begin{equation}
\begin{split}
\int_{S}\langle\beta_{\tilde g^{(4)}}h,Y\rangle_{\tilde g^{(4)}}\cdot u\cdot dvol_{g_S}=
\int_{S}\langle h,\delta_{\tilde g^{(4)}}^*Y+\frac{1}{2}(\delta_{\tilde g^{(4)}}Y)\tilde g^{(4)}\rangle_{\tilde g^{(4)}} u\cdot dvol_{g_S}.
\end{split}
\end{equation}
Thus, as for the second term on the right side of equation (5.7), we have the following equality for all vector fields $Y_1,Y_2\in T^{m,\alpha}_{\delta}(V^{(4)})$ which vanish at $\partial V^{(4)}$:
\begin{equation*}
\begin{split}
&\int_{S}\langle \beta_{\tilde g^{(4)}}L_{Y_1}\alpha^2,Y_2\rangle_{\tilde g^{(4)}} u\cdot d vol_{g_S}\\
&=\int_{S}\langle L_{Y_1}\alpha^2,\delta^*_{\tilde g^{(4)}}Y_2+\frac{1}{2}(\delta_{\tilde g^{(4)}}Y_2)\tilde g^{(4)}\rangle_{\tilde g^{(4)}} u\cdot d vol_{g_S}\\
&=\int_S-2u^{-2}\langle L_{Y_1}\alpha^2(\partial_t)^T,\delta^*_{\tilde g^{(4)}}Y_2(\partial_t)^T\rangle_{g_S} u\cdot dvol_{g_S}\\
&=\int_{S}-2u^{-2}\langle~ d\theta(Y_1^T)-d(\frac{Y_1^{\perp}}{u}),~-\frac{u^2}{2}[d\theta(Y^T_2)-d(\frac{Y_2^{\perp}}{u})]~\rangle_{g_S}u\cdot d vol_{g_S}\\
&=\int_{S}-2u^{-2}\langle~ -\frac{u^2}{2}[d\theta(Y_1^T)-d(\frac{Y_1^{\perp}}{u})],~d\theta(Y^T_2)-d(\frac{Y_2^{\perp}}{u})~\rangle_{g_S}u\cdot d vol_{g_S}\\
\end{split}
\end{equation*}
\begin{equation*}
\begin{split}
&=\int_S-2u^2\langle \delta^*_{\tilde g^{(4)}}Y_1(\partial_t)^T,L_{Y_2}\alpha^2(\partial_t)^T\rangle_{g_S}u\cdot dvol_{g_S}\\
&=\int_{S}\langle \delta^*Y_1+\frac{1}{2}(\delta_{\tilde g^{(4)}}Y_1)\tilde g^{(4)}, L_{Y_2}\alpha^2\rangle_{\tilde g^{(4)}} u\cdot d vol_{g_S} \\
&=\int_{S}\langle Y_1,  \beta_{\tilde g^{(4)}}L_{Y_2}(dt+\theta)^2\rangle_{\tilde g^{(4)}} u\cdot d vol_{g_S}.
\end{split}
\end{equation*}

In the calculation above, the first equality comes from formula (5.11). The second and third equalities are based on the decompositions (equations (5.12-13) below) of the time-independent vector fields $L_{Y_1}\alpha^2$ and $\delta^*_{\tilde g^{(4)}}Y_2$. Furthermore, the last three equalities above are carried out in the reversed way as the first three. 

The time-independent vector fields $L_{Y_1}\alpha^2$ and $\delta^*_{\tilde g^{(4)}}Y_2$ can be decomposed as,
\begin{equation}
\begin{split}
	\begin{cases}
	        [L_{Y_1}\alpha^2]^T=0\\
		[L_{Y_1}\alpha^2](\partial t,\partial t)=0\\
		\{[L_{Y_1}\alpha^2](\partial t)\}^T=d\theta( Y_1^T)-d(\frac{ Y_1^{\perp}}{u}),
	\end{cases}
\end{split}
\end{equation}
and
\begin{equation}
\begin{split}
\begin{cases}
		(\delta^*_{\tilde g^{(4)}}Y_2)^T=\delta^*_{g_S} Y_2^T\\
		\delta^*_{\tilde g^{(4)}}Y_2(\partial_t,\partial_t)=-u Y_2^T(u)\\
		[\delta^*_{\tilde g^{(4)}}Y_2(\partial_t)]^T=-\frac{1}{2}u^2d\theta( Y_2^T)+\frac{1}{2}u^2d(\frac{ Y_2^{\perp}}{u}).
	\end{cases}
\end{split}
\end{equation}
We refer to \S 7.4 for detailed proof of the equations (5.12-13). 

Summing up all the facts above, we conclude that the system (5.6) is formally self-adjoint.
\end{proof}

Now we give the proof for Proposition 5.3.
\begin{proof}
We prove it by contradiction. Assume that Proposition 5.3 is not true, so there exists an interval $I$ which contains $0$ such that for any $\epsilon\in I$, the system (5.6) has a nonzero solution.

From Lemma 5.4, we see that system $(5.6)$ gives rise to a analytic curve of elliptic self-adjoint operators parametrized by $\epsilon$. By the perturbation theory for self-adjoint operators (cf.[K],[W]), there exists a smooth curve of nontrivial solutions $Y(\epsilon)~(\epsilon\in I)$ solving the system $(5.6)$, i.e.
\begin{equation*}
\begin{cases}
		\beta_{\tilde g^{(4)}}\delta^*_{g_{\epsilon}^{(4)}}Y(\epsilon)=0\quad\text{on }S\\
		Y(\epsilon)=0\quad\text{on }\partial S
	\end{cases}
	\quad\forall \epsilon\in I.
\end{equation*}
The proof of this is discussed in detail in \S7.6. In particular, $Y(0)$ is a nontrivial solution to $(5.6)$ at $\epsilon=0$. In the following we will denote it as $\tilde Y=Y(0)$. Taking the linearization of the system above at $\epsilon=0$, we obtain:
\begin{equation}
	\begin{cases}
		\beta_{\tilde g^{(4)}}\delta_{\tilde g^{(4)}}^* Y'+\beta_{\tilde g^{(4)}}\delta^*_{g'}\tilde Y=0\quad\text{on }S\\
		Y'=0\quad\text{on }\partial S,
	\end{cases}
\end{equation}
where 
$$Y'=\frac{d}{d\epsilon}|_{\epsilon=0}Y(\epsilon),
~\delta^*_{g'}\tilde Y=\frac{d}{d\epsilon}|_{\epsilon=0}\delta^*_{g_{\epsilon}}\tilde Y
=\frac{1}{2}\frac{d}{d\epsilon}|_{\epsilon=0}L_{\tilde Y}g_{\epsilon}=\frac{1}{2}L_{\tilde Y}\alpha^2
.$$

The first equation in (5.14) gives, 
		$$-\beta_{\tilde g^{(4)}}\delta_{\tilde g^{(4)}}^* Y'=\beta_{\tilde g^{(4)}}\delta^*_{g'}\tilde Y.$$
 Since $\beta_{\tilde g^{(4)}}\delta^*_{\tilde g^{(4)}}$ is self-adjoint, the equation above yields that,
\begin{equation}
\begin{split}
	\int_{V^{(4)}}\langle \beta_{\tilde g^{(4)}}\delta^*_{g'}\tilde Y,\tilde Y\rangle dvol_{\tilde g^{(4)}}&=-\int_{V^{(4)}}\langle \beta_{\tilde g^{(4)}}\delta^*_{\tilde g^{(4)}}Y',\tilde Y\rangle dvol_{\tilde g^{(4)}}\\
	&=-\int_{V^{(4)}}\langle Y',\beta_{\tilde g^{(4)}}\delta^*_{\tilde g^{(4)}}\tilde Y\rangle dvol_{\tilde g^{(4)}}\\
	&=0.
\end{split}
\end{equation}
Apply integration by parts to the left side of $(5.15)$ and obtain,
\begin{equation}
	\int_{V^{(4)}}\langle\delta^*_{g'}\tilde Y,\delta^*_{\tilde g^{(4)}}\tilde Y+\frac{1}{2}(\delta \tilde Y)\tilde g^{(4)}\rangle dvol_{\tilde g^{(4)}}=0.
\end{equation}
In the above, $\delta^*_{g'}\tilde Y=\frac{1}{2}L_{\tilde Y}\alpha^2$. Now apply the formulas (5.12-13) to $L_{\tilde Y}\alpha^2$ and $\delta^*_{\tilde g^{(4)}}\tilde Y$, and substitute them into $(5.16)$. It follows that,
$$\int_S\frac{1}{2}||d\theta(\tilde Y^T)-d(\frac{\tilde Y^{\perp}}{u})||_{g_S}^2u\cdot dvol_{g_S}=0.$$
Therefore, we have
\begin{equation}
	d\theta(\tilde Y^T)=d(\frac{\tilde Y^{\perp}}{u}).
\end{equation}
Recall that $\tilde Y$ is a nontrivial solution to system $(5.6)$ at $\epsilon=0$. By applying the decomposition equations in (5.9) to the vector field $\tilde Y$, we express the time-independent system (5.6) (at $\epsilon=0$) as an equivalent system:
\begin{equation}
\begin{cases}
	\nabla_{g_S}^*\nabla_{g_S}\tilde Y^T-\frac{1}{u}(\nabla_{g_S})_{\nabla u}\tilde Y^T+\frac{1}{u^2}\tilde Y^T(u)\nabla u\\
	\quad\quad\quad\quad\quad\quad\quad\quad\quad\quad+\frac{1}{2}u^2d\theta(d\theta(\tilde Y^T))-u^2d\theta(\nabla\frac{\tilde Y^{\perp}}{u})=0\\
	~~\\
	\Delta_{g_S}(\frac{\tilde Y^{\perp}}{u})-3\frac{1}{u}\langle\nabla u,\nabla\frac{\tilde Y^{\perp}}{u}\rangle+\frac{1}{4}uY^{\perp}|d\theta|^2\\
	\quad\quad\quad\quad\quad\quad\quad\quad\quad\quad-\langle d\theta,\nabla_{g_S}\tilde Y^T\rangle+\frac{1}{u}d\theta(\nabla u,Y^T)=0.
\end{cases}
\end{equation} 
Observe that the last two terms in the first equation in $(5.18)$ can be manipulated as:
\begin{equation*}
\begin{split}
	&\frac{1}{2}u^2d\theta(d\theta(\tilde Y^T))-u^2d\theta(\nabla \frac{\tilde Y^{\perp}}{u})\\
	=&\frac{1}{2}u^2d\theta[d\theta(\tilde Y^T)-2d(\frac{\tilde Y^{\perp}}{u})]\\
	=&-\frac{1}{2}u^2d\theta(d\theta(Y^T)),
\end{split}	
\end{equation*}
where the last equality is based on $(5.17)$. Plugging this back to $(5.18)$, we obtain
\begin{equation*}
		\nabla_{g_S}^*\nabla_{g_S}\tilde Y^T-\frac{1}{u}(\nabla_{g_S})_{\nabla u}\tilde Y^T+\frac{1}{u^2}\tilde Y^T(u)\nabla u-\frac{1}{2}u^2d\theta(d\theta(\tilde Y^T))=0
\end{equation*}
Pairing the equation above with $\tilde Y^T$ yields,
\begin{equation*}
		\frac{1}{2}\Delta_{g_S}(||\tilde Y^T||^2)+||\nabla_{g_S}\tilde Y^T||^2-\frac{1}{2u}(\nabla_{g_S})_{\nabla u}||\tilde Y^T||^2+\frac{1}{u^2}||\tilde Y^T(u)||^2+\frac{1}{2}u^2||d\theta(\tilde Y^T)||^2=0
\end{equation*}
Based on this equation and the fact that $\tilde Y^T$ is asymptotically zero and equals to zero on $\partial S$, it is easy to derive by the maximum principle that $\tilde Y^T=0$, and consequently $\tilde Y^{\perp}=0$ according to the second equation in (5.18). This contradicts with the assumption that $\tilde Y$ is nontrivial. 
\end{proof}
Combining Propositions 5.2 and 5.3, it is straightforward to derive that, 

\begin{theorem}In any neighborhood of $\tilde g^{(4)}\in\mathcal S$, there always exists a perturbation $g^{(4)}_0$ of $\tilde g^{(4)}$ such that $\beta_{\tilde g^{(4)}}g^{(4)}_0=0$ (Bianchi-free) and $\beta_{\tilde g^{(4)}}\delta^*_{g^{(4)}_0}$ is invertible.
\end{theorem}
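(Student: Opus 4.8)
The plan is to take $g^{(4)}_0$ to be a member $g^{(4)}_\epsilon$ of the explicit family (5.2) for a sufficiently small parameter $\epsilon$, and to read off the two required properties directly from Propositions 5.2 and 5.3.

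First I would check that $\epsilon\mapsto g^{(4)}_\epsilon=\tilde g^{(4)}+\epsilon(dt+\theta)^2$ is a smooth curve in $\mathcal S$ through $\tilde g^{(4)}$. Rewriting it as $g^{(4)}_\epsilon=-(u^2-\epsilon)(dt+\theta)^2+g_S$, one sees that $\langle\partial_t,\partial_t\rangle_{g^{(4)}_\epsilon}=-(u^2-\epsilon)$, so $\partial_t$ stays timelike and the admissibility condition (2.2) persists as long as $\epsilon<\inf_M u^2$, which is a positive number since $u$ is continuous, positive, and tends to $1$ at infinity. Thus there is an interval $I=(-\epsilon_0,\epsilon_0)$ with $g^{(4)}_\epsilon\in\mathcal S$ for $\epsilon\in I$ and $g^{(4)}_\epsilon\to\tilde g^{(4)}$ as $\epsilon\to 0$; after shrinking $\epsilon_0$ (also using that Lemma 5.4 requires $\epsilon$ small) we may assume $g^{(4)}_\epsilon$ lies in any prescribed neighborhood $\mathcal U$ of $\tilde g^{(4)}$ in $\mathcal S$ for every $\epsilon\in I$.

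By Proposition 5.2 we have $\beta_{\tilde g^{(4)}}g^{(4)}_\epsilon=0$ for all $\epsilon$, so every member of the family is Bianchi-free with respect to $\tilde g^{(4)}$; this gives the first assertion regardless of the choice of $\epsilon$. For the second, apply Proposition 5.3 on the interval $I$: it provides some $\epsilon\in I$ for which the Dirichlet boundary value problem (5.6) for $\beta_{\tilde g^{(4)}}\delta^*_{g^{(4)}_\epsilon}$ has only the trivial solution. By Lemma 5.4 this operator is elliptic and formally self-adjoint (with respect to the measure $u\,dvol_{g_S}$), and, exactly as in the discussion following Assumption 3.1, for such an operator triviality of the kernel on $\{Y\in T^{m,\alpha}_\delta(V^{(4)}):Y=0\text{ on }\partial M\}$ is equivalent to its being an isomorphism onto the corresponding weighted H\"older space; i.e. $\beta_{\tilde g^{(4)}}\delta^*_{g^{(4)}_\epsilon}$ is invertible in the required sense. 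Setting $g^{(4)}_0:=g^{(4)}_\epsilon$ for this $\epsilon$ finishes the proof, since $g^{(4)}_0\in\mathcal U$ by construction.

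All the analytic content is already contained in Propositions 5.2 and 5.3, so there is no real obstacle; the one bookkeeping point is to match the quotient picture of \S5.1 with the spacetime formulation of Assumption 3.1. Via Remark 5.1 together with the splitting $Y=Y^T-u^{-1}Y^\perp\partial_t$ of Lemma 5.4, time-independent vector fields on $V^{(4)}$ that vanish along $\partial V^{(4)}$ correspond bijectively to pairs $(Y^T,Y^\perp)$ on $(S,g_S)$ vanishing on $\partial S$, and under this identification the solution space of (5.6) is exactly the kernel occurring in Assumption 3.1 with base metric $g^{(4)}_0$; hence "invertible" means the same thing in both formulations. (In the present section $\tilde g^{(4)}$ violates Assumption 3.1, so the $\epsilon$ produced by Proposition 5.3 is necessarily nonzero, and $g^{(4)}_0$ is a genuine perturbation.)
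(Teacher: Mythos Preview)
Your proof is correct and follows exactly the paper's approach: the paper's own argument is simply ``Combining Propositions 5.2 and 5.3, it is straightforward to derive'' the result, and you have carried out precisely that combination, with the added care of checking that the curve $g^{(4)}_\epsilon$ stays in $\mathcal S$ and lands in any prescribed neighborhood for small $\epsilon$. The bookkeeping you include about matching the quotient picture with the spacetime formulation is more detail than the paper provides, but entirely in line with its intent.
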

%%%%%%%%%%%%%%%%%%%%%%%%%%%%%%%%%%%%%%%%%%%%%%%%%%%%%%%%%%%%%%%%%%%%%%%%%%%%%
\subsection{Alternative local charts}
\begin{theorem}
Theorem 1.1 still holds without Assumption 3.1.
\end{theorem}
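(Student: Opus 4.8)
The plan is to replace the failed Assumption 3.1 by the invertibility of $\beta_{\tilde g^{(4)}}\delta^*_{g_0^{(4)}}$ supplied by Theorem 5.5, and to re-run the constructions of \S3 and \S4 with the Bianchi reference $\tilde g^{(4)}$ replaced by the perturbation $g_0^{(4)}=\tilde g^{(4)}+\epsilon\alpha^2$ of Theorem 5.5, where $\alpha=dt+\theta$ is as in \S5.1. Two observations make this possible. First, in the projection formalism $g_0^{(4)}=-(u^2-\epsilon)\alpha^2+g_S$ is again a stationary globally hyperbolic metric with the same horizontal data, so the computation in the proof of Proposition 5.2, applied to $g_0^{(4)}$ in place of $\tilde g^{(4)}$, gives $\beta_{g_0^{(4)}}\alpha^2=0$; hence $\beta_{g_0^{(4)}}\tilde g^{(4)}=\beta_{g_0^{(4)}}g_0^{(4)}-\epsilon\,\beta_{g_0^{(4)}}\alpha^2=0$, i.e.\ $\tilde g^{(4)}$ is itself Bianchi-free relative to $g_0^{(4)}$. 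Second, in all the relevant operators $\delta^*$ acts on a $1$-form and its principal symbol there does not involve the metric, so replacing $\tilde g^{(4)}$ by $g_0^{(4)}$ throughout perturbs the principal symbols of the linearized interior and boundary operators only by terms of order $\epsilon$.

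With this in hand I would set up an alternative BVP: the system $(3.2)$ with the interior gauge term $\delta^*_{g^{(4)}}\beta_{\tilde g^{(4)}}g^{(4)}$ replaced by $\delta^*_{g_0^{(4)}}\beta_{g_0^{(4)}}g^{(4)}$ and the last boundary condition replaced by $\beta_{g_0^{(4)}}g^{(4)}=0$. Since $Ric_{\tilde g^{(4)}}=0$ and $\beta_{g_0^{(4)}}\tilde g^{(4)}=0$, the pair $(\tilde g^{(4)},0)$ solves it. Applying the Bianchi operator $\beta_{g^{(4)}}$ to the interior equation and using the contracted second Bianchi identity $\beta_{g^{(4)}}Ric_{g^{(4)}}=0$ yields $\beta_{g^{(4)}}\delta^*_{g_0^{(4)}}\!\big(\beta_{g_0^{(4)}}g^{(4)}\big)=0$ on $M$ with $\beta_{g_0^{(4)}}g^{(4)}=0$ on $\partial M$; since $\beta_{g^{(4)}}\delta^*_{g_0^{(4)}}\to\beta_{\tilde g^{(4)}}\delta^*_{g_0^{(4)}}$ as $g^{(4)}\to\tilde g^{(4)}$, the latter is invertible under the Dirichlet condition by Theorem 5.5, and invertibility is an open condition, the operator $\beta_{g^{(4)}}\delta^*_{g_0^{(4)}}$ is invertible for $g^{(4)}$ near $\tilde g^{(4)}$, forcing $\beta_{g_0^{(4)}}g^{(4)}=0$ on all of $M$. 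Thus the solution set of the alternative BVP, with the Bartnik data left free, is the analogue $\mathcal C_0$ of the space $\mathcal C$ of \S3 obtained by replacing the gauge condition $\beta_{\tilde g^{(4)}}g^{(4)}=0$ with $\beta_{g_0^{(4)}}g^{(4)}=0$.

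It remains to re-prove the ellipticity and the chart statements. For ellipticity, the linearization $D\mathcal F_0$ of the operator of the alternative BVP has the same principal symbol as the operator $D\mathcal F$ of \S4 up to terms of order $\epsilon$, and the proper-ellipticity and complementing conditions of Theorem 4.1 were verified there with a strictly nonvanishing determinant ($\det\tilde B(\eta+z\mu)=8N^8|\eta|^8$, nonzero for $\eta\neq0$); hence they persist for $\epsilon$ small, $D\mathcal F_0$ is elliptic, and the associated boundary map $\tilde\Pi_0$ attached to $\mathcal C_0$ is Fredholm at $(\tilde g^{(4)},0)$. For the chart, the map $\mathcal P_0:\mathcal C_0\to\mathbb E$ sending $(\hat g^{(4)},\hat F)$ to the equivalence class $(3.6)$ is defined exactly as $\mathcal P$ in \S3; its inverse is a gauge fixing, for which one applies the implicit function theorem to $\mathcal G(g^{(4)},\Phi)=\beta_{\tilde g^{(4)}}\!\big[\Phi^*(g^{(4)}+\epsilon\alpha^2)\big]$ --- this vanishes at $(\tilde g^{(4)},\mathrm{Id})$ because $\beta_{\tilde g^{(4)}}(\tilde g^{(4)}+\epsilon\alpha^2)=\beta_{\tilde g^{(4)}}g_0^{(4)}=0$ by Proposition 5.2, and its linearization in the $\Phi$-direction there is $2\beta_{\tilde g^{(4)}}\delta^*_{g_0^{(4)}}$, which is invertible by Theorem 5.5. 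Running the argument of Theorem 3.2 with this gauge fixing produces a local diffeomorphism $\mathcal P_0$ near $[\tilde g^{(4)}]$ with $\tilde\Pi_0=\Pi\circ\mathcal P_0$ (using the transformation formulas (2.8)--(2.10)), while the fiber-bundle argument of Theorem 3.3 over the Banach manifold $\mathcal E/\mathcal D_4$ shows $\mathcal C_0$ is a smooth Banach manifold near $(\tilde g^{(4)},0)$. Combining these, $\mathbb E$ is a smooth Banach manifold near $[\tilde g^{(4)}]$ and $\Pi$ is Fredholm there; together with Theorem 4.2, which covers the metrics satisfying Assumption 3.1, this completes the proof of Theorem 1.1.

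The step I expect to be the main obstacle is the gauge fixing above. Theorem 5.5 delivers invertibility of $\beta_{\tilde g^{(4)}}\delta^*_{g_0^{(4)}}$, whose divergence slot carries the \emph{perturbed} metric $g_0^{(4)}$, not $\tilde g^{(4)}$; but the linearization of a gauge-fixing map in the diffeomorphism direction always carries $\delta^*$ of the metric being fixed, so fixing metrics near $\tilde g^{(4)}$ in the naive way produces the operator $\beta\,\delta^*_{\tilde g^{(4)}}$ whose non-invertibility is exactly the failure of Assumption 3.1 --- this is why the naive Bianchi slice $\{\beta_{\tilde g^{(4)}}g^{(4)}=0\}$ is no longer transverse to the $\mathcal D_4$-orbits and $\mathcal C$ cannot serve as a chart. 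Shifting the argument of $\beta_{\tilde g^{(4)}}$ by $\epsilon\alpha^2$ --- equivalently, gauge-fixing the shifted metric $g^{(4)}+\epsilon\alpha^2$, which lies near $g_0^{(4)}$ and at which the relevant operator becomes the invertible $\beta_{\tilde g^{(4)}}\delta^*_{g_0^{(4)}}$ --- repairs transversality; the delicate points are the bookkeeping that makes this shift consistent with the definitions of $\mathcal C_0$ and $\mathcal P_0$ (using $\beta_{\tilde g^{(4)}}\alpha^2=0$ together with $\beta_{g_0^{(4)}}\alpha^2=0$) and the choice of the parameter $\epsilon$ so that the neighborhood of $g_0^{(4)}$ on which the implicit function theorem applies contains a full neighborhood of $\tilde g^{(4)}$ in $\mathcal E$.
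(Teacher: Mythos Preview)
Your overall strategy---perturb the reference, set up an alternative BVP, re-run \S3 and \S4---is right, and your ellipticity argument (principal symbols change by $O(\epsilon)$, determinant stays nonzero) is fine. But the gauge-fixing step, which you correctly flag as the obstacle, does not close as written.

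The issue is a mismatch between your slice condition and your gauge-fixing map. You define $\mathcal C_0$ by the condition $\beta_{g_0^{(4)}}g^{(4)}=0$, but your map $\mathcal G(g^{(4)},\Phi)=\beta_{\tilde g^{(4)}}\big[\Phi^*(g^{(4)}+\epsilon\alpha^2)\big]$ does \emph{not} produce this condition on $\hat g^{(4)}=\Phi^*g^{(4)}$: it produces $\beta_{\tilde g^{(4)}}\big[\hat g^{(4)}+\epsilon\,\Phi^*\alpha^2\big]=0$, and since $\Phi^*\alpha=\Phi^*(dt+\theta)=dt+df+\psi^*\theta\neq\alpha$ for a general $\Phi_{(\psi,f)}\in\mathcal D_4$, this is not a condition on $\hat g^{(4)}$ alone---the ``slice'' depends on $\Phi$, so it is not a slice. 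Nor can you use the natural gauge-fixing map $\mathcal G(g^{(4)},\Phi)=\beta_{g_0^{(4)}}\Phi^*g^{(4)}$ for your slice, because its linearization in the $\Phi$-direction at $(\tilde g^{(4)},\mathrm{Id})$ is $2\beta_{g_0^{(4)}}\delta^*_{\tilde g^{(4)}}$, and Theorem~5.5 only gives you invertibility of $\beta_{\tilde g^{(4)}}\delta^*_{g_0^{(4)}}$; these are different operators and no self-adjointness argument in the paper interchanges them.

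The paper resolves this by a device you did not hit on: it \emph{swaps the roles} of the two metrics in the gauge term, taking the gauge condition to be $\beta_{g^{(4)}}g_0^{(4)}=0$ (Bianchi with respect to the \emph{unknown} metric, acting on the \emph{fixed} perturbed reference) and the interior gauge term to be $-\delta^*_{g_0^{(4)}}\beta_{g^{(4)}}g_0^{(4)}$. This condition is naturally diffeomorphism-covariant via $\beta_{\Phi^*g^{(4)}}g_0^{(4)}=\Phi^*\big\{\beta_{g^{(4)}}(\Phi^{-1})^*g_0^{(4)}\big\}$, and its linearization in the $\Phi$-direction at $(\tilde g^{(4)},\mathrm{Id})$ is exactly $-\beta_{\tilde g^{(4)}}\delta^*_{g_0^{(4)}}$ (using $\beta_{\tilde g^{(4)}}g_0^{(4)}=0$ from Proposition~5.2), which is the invertible operator supplied by Theorem~5.5. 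The rest of your outline---solutions of the BVP are vacuum and gauge-free near $\tilde g^{(4)}$, $\mathcal C_0$ is a local chart for $\mathbb E$, the symbols differ from those of \S4 by $O(\|g_0^{(4)}-\tilde g^{(4)}\|)$ so ellipticity persists---then goes through exactly as you sketched, with the linearized differences recorded explicitly (your $O(\epsilon)$ heuristic is made precise by computing $(\beta_{g^{(4)}})'_{h^{(4)}}g^{(4)}=-\beta_{g^{(4)}}h^{(4)}$ and isolating the terms involving $g_0^{(4)}-\tilde g^{(4)}$).
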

\begin{proof}
In the case Assumption 3.1 fails, we take a perturbation $g^{(4)}_0$ near $\tilde g^{(4)}$ as described in Theorem 5.5. and modify (3.2) to a new BVP with unknowns $( g^{(4)},F)\in \mathcal S\times C^{m,\alpha}_{\delta}(M)$ as follows:
\begin{equation}
	\begin{split}
		&\quad\quad\begin{cases}
			Ric_{g^{(4)}}-\delta^*_{g^{(4)}_0}\beta_{g^{(4)}}g^{(4)}_0=0\\
			\Delta F=0,
		\end{cases}
		\quad\text{on}\quad M\\
		&\quad\quad\begin{cases}			
			g_{\partial M}=\gamma\\
			aH_{\partial M}+btr_{\partial M}K=H\\
			atr_{\partial M}K+bH_{\partial M}=k\\
			\omega_{\mathbf n}+a^2d_{\partial M}(a/b)=\tau\\
			-\beta_{g^{(4)}} g_0^{(4)}=0.\\
		\end{cases}\quad\text{on}\quad\partial M
	\end{split}
\end{equation}

By applying Bianchi operator to the first equation above, one obtains,
\begin{equation}
\begin{cases}
\beta_{g^{(4)}}\delta^*_{g^{(4)}_0}\big(\beta_{g^{(4)}}g^{(4)}_0\big)=0\quad\text{ on }M,\\
-\beta_{g^{(4)}}g^{(4)}_0=0\quad\text{ on }\partial M.
\end{cases}
\end{equation}
Since the operator $\beta_{\tilde g^{(4)}}\delta^*_{g^{(4)}_0}$ is invertible, so is the operator $\beta_{g^{(4)}}\delta^*_{g^{(4)}_0}$ when $g^{(4)}$ is near $\tilde g^{(4)}$. Thus $(5.20)$ implies that $\beta_{g^{(4)}}g^{(4)}_0=0$ on $M$. So a solution $g^{(4)}$ of (5.19) near $\tilde g^{(4)}$ must be Ricci flat and gauge free with respect to $g_0^{(4)}$. As in \S3, to associate the perturbed BVP (5.19) with a natural boundary map, we first construct a solution space $\mathcal C_0$ near $\tilde g^{(4)}$ given by,
$$\mathcal C_0=\{(g^{(4)},F)\in \mathcal S\times C^{m,\alpha}_{\delta}(M):~Ric_{g^{(4)}}=0, \beta_{g^{(4)}}g^{(4)}_0=0,\Delta F=0~\text{on}~M~\}.$$
Obviously, $(\tilde g^{(4)},0)\in\mathcal C_0$ by construction. Next, as in the proof of Theorem 3.2, we need to prove that any stationary vacuum metric $g^{(4)}$ near $\tilde g^{(4)}$ can be transformed by a diffeomorphism in $\mathcal D_4$ so that it satisfies the gauge condition $\beta_{g^{(4)}}g^{(4)}_0=0$. Consider the following map:
\begin{equation*}
\begin{split}
&\mathcal G: \mathcal S\times \mathcal D_4\rightarrow (\wedge_1)^{m.\alpha}_{\delta} (V^{(4)})\\
&\mathcal G(g^{(4)},\Phi)=\beta_{\Phi^*g^{(4)}}g^{(4)}_0.
\end{split}
\end{equation*}
Notice that 
\begin{equation*}
\beta_{\Phi^*g^{(4)}}g^{(4)}_0=\Phi^*\{\beta_{g^{(4)}}[(\Phi^*)^{-1}g^{(4)}_0]\}.
\end{equation*}
Thus the linearization of $\mathcal G$ at $(\tilde g^{(4)}, Id)$ is given by,
\begin{equation*}
\begin{split}
&D\mathcal G|_{(\tilde g^{(4)},Id)}: T\mathcal S\times T\mathcal D_4\rightarrow (\wedge_1)^{m.\alpha}_{\delta} (V^{(4)})\\
&D\mathcal G|_{(\tilde g^{(4)},Id)}[(h^{(4)},Y)]=-\beta_{\tilde g^{(4)}}\delta_{g^{(4)}_0}^*Y+\beta'_{h^{(4)}}g^{(4)}_0. 
\end{split}
\end{equation*}
Since in the linearization above, the operator $[-\beta_{\tilde g^{(4)}}\delta_{g^{(4)}_0}^*]$ is invertible, it follows by the implicit function theorem that, for any $g^{(4)}$ near $\tilde g^{(4)}$, there is a unique element $\Phi\in\mathcal D_4$ near to $Id_{V^{(4)}}$ such that the gauge term $\beta_{\Phi^*g^{(4)}}g^{(4)}_0$ vanishes.

Therefore the perturbed solution space $\mathcal C_0$ has similar structure as the space $\mathcal C$ in \S3, i.e. near $(\tilde g^{(4)},0)$, $\mathcal C_0$ is locally a fiber bundle over the quotient space $\mathcal E/\mathcal D_4$ with fiber being the space of harmonic functions in $C^{m,\alpha}_{\delta}(M)$. Furthermore, based on the Theorems 3.2 and 3.3, we conclude there exists a local diffeomorphism $\mathcal P_0$ such that
$\mathcal C_0\cong\mathbb E$ near $(\tilde g^{(4)},0)$ via $\mathcal P_0$ and 
\begin{equation}
\Pi_0=\Pi\circ\mathcal P_0,
\end{equation}
where $\Pi_0$ is the natural boundary map defined on $\mathcal C_0$ given by,
\begin{equation*}
	\begin{split}
		\Pi_0:\mathcal C_0&\rightarrow\mathbf B\\
		\Pi_0(g^{(4)},F)=(g_{\partial M},a H_{\partial M}+btr_{\partial M} K,&atr_{\partial M} K+b H_{\partial M},\omega_{\mathbf n}+a^2d_{\partial M}(b/a)).
	\end{split}
\end{equation*}
 
As for ellipticity of the system (5.19), notice that linearization of the equality $\beta_{g^{(4)}}g^{(4)}=0$ yields, 
$$(\beta_{g^{(4)}})'_{h^{(4)}}g^{(4)}=-\beta_{g^{(4)}}{h^{(4)}}.$$ 
Thus the linearization of the gauge term in (5.19) at $(\tilde g^{(4)},0)$ is given by:
\begin{equation*}
\begin{split}
[-\delta^*_{g^{(4)}_0}\beta_{g^{(4)}}g^{(4)}_0]'_{h^{(4)}}&=-\delta^*_{g^{(4)}_0}(\beta_{\tilde g^{(4)}})'_{h^{(4)}}g^{(4)}_0\\
&=-\delta^*_{g^{(4)}_0}(\beta_{\tilde g^{(4)}})'_{h^{(4)}}(\tilde g^{(4)}+g^{(4)}_0-\tilde g^{(4)})\\
&=\delta^*_{g^{(4)}_0}\beta_{\tilde g^{(4)}}{h^{(4)}}-\delta^*_{g^{(4)}_0}(\beta_{\tilde g^{(4)}})'_{h^{(4)}}(g^{(4)}_0-\tilde g^{(4)})
\end{split}
\end{equation*}
Comparing the system (5.19) with the previous one (3.2), it is easy to see that, at the reference metric $\tilde g^{(4)}$, the only differences between their linearizations  are given by the term
\begin{equation}
[\delta^*_{g^{(4)}_0}\beta_{\tilde g^{(4)}}-\delta^*_{\tilde g^{(4)}}\beta_{\tilde g^{(4)}}]({h^{(4)}})-\delta^*_{g^{(4)}_0}(\beta_{\tilde g^{(4)}})'_{h^{(4)}}(g^{(4)}_0-\tilde g^{(4)})
\end{equation}
in the interior equations, and
\begin{equation}
-(\beta)'_{h^{(4)}}(g_0^{(4)}-\tilde g^{(4)})
\end{equation}
in the boundary equations.  We can choose $g^{(4)}_0$ close enough to $\tilde g^{(4)}$ so that the terms in (5.22-23) are very small. Then the principal symbols of the perturbed system $(5.19)$ is close to that of the system (3.2). It has been proved that $(3.2)$ is elliptic. So (5.19) is also elliptic. 
As a consequence $\Pi_0$ is a Fredholm map and hence so is $\Pi$ because of the equivalence relation (5.21). This completes the proof.

\end{proof}
%%%%%%%%%%%%%%%%%%%%%%%%%%%%%%%%%%%%%%%%%%%%%%%%%%%%%%%%%%%%%%%%%%%%%%%%%%%%%%%%%%%%%%%%%%%
%%%%%%%%%%%%%%%%%%%%%%%%%%%%%%%%%%%%%%%%%%%%%%%%%%%%%%%%%%%%%%%%%%%%%%%%%%%%%%%%%%%%%%%%%%%%%
%%%%%%%%%%%%%%%%%%%%%%%%%%%%%%%%%%%%%%%%%%%%%%%%%%%%%%%%%%%%%%%%%%%%%%%%%%%%%%%%%%%%%%%%%%%%%%%%%%%%%%%%%%%%%%%%%%%%%%%%%%%%%%%%%%%%%%%%%%%%%%%%%%%%%%%%%%%%%%%%%%%%%%%%%%%%%%%%%%%%%%%%%%%%%%%%%%%%%%%%%%%%%%%%%%%%%%%%%%%%%%%%%%%%%%%%%%%%%%%%%%%%%%%%%%%%%%%%%%%%%%%%%%%%%%%%
\section{Local existence and uniqueness}
In this section we choose $V^{(4)}=\mathbb R\times(\mathbb R^3\setminus B^3)\subset \mathbb R^4$, equipped with the standard coordinates $\{t,x^i\}~(i=1,2,3)$ induced from $\mathbb R^4$. Let $M$ be the hypersurface $\{t=0\}$. Then $\partial M=S^2$, the unit sphere. Set the reference metric $\tilde g^{(4)}=\tilde g^{(4)}_0$, where $\tilde g^{(4)}_0$ is the standard flat (Minkowski) metric on $\mathbb R\times (\mathbb R^3\setminus B)$, i.e. $\tilde g_0^{(4)}=-dt^2+\Sigma_i (dx^i)^2$. Since it is static, i.e. its twist tensor in the quotient formalism is zero, it is easy to verify that  Assumption 3.1 holds in this case (cf.\S7.5). So we can use the local chart $(\mathcal C,\tilde\Pi)$ in \S3 for the Bartnik boundary map at $[\tilde g^{(4)}_0]\in\mathbb E$. Obviously, the Bartnik data of this metic is
\begin{equation}
\tilde\Pi(\tilde g^{(4)}_0, 0)=(g_{S^2}, 2, 0, 0),
\end{equation}
where $g_{S^2}$ is the standard round metric on $S^2$. In this section we apply the ellipticity result proved in the previous sections to show that in a neighborhood of the standard flat boundary data $(g_{S^2}, 2, 0, 0)$, Bartnik boundary data admits unique stationary vacuum extensions up to diffeomorphisms.
\begin{theorem}
The kernel of $D\tilde\Pi_{(\tilde g_0^{(4)},0)}$ is trivial.
\end{theorem}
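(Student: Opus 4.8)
The plan is to exploit that the base point $\tilde g^{(4)}_0$ is both flat and static, so at this point the linearized interior operator of (3.2) collapses to the componentwise flat Laplacian and the $3+1$ decomposition of a kernel element decouples. Let $(h^{(4)},G)\in\ker D\tilde\Pi_{(\tilde g^{(4)}_0,0)}$. First I would record the interior information: since the shift and lapse of $\tilde g^{(4)}_0$ are $X=0$, $N=1$, the formula for $D\mathcal L$ in \S4.1 shows that $D^*_{\tilde g^{(4)}_0}D_{\tilde g^{(4)}_0}h^{(4)}=0$ becomes $\Delta_{\mathrm{eucl}}h^{(4)}_{\alpha\beta}=0$ for every component on $M\cong\mathbb R^3\setminus B^3$, and $\Delta G=0$, while $\beta_{\tilde g^{(4)}_0}h^{(4)}=0$. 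Writing $h^{(4)}$ as a time-independent triple $(g',X',N')$ of variations of spatial metric, shift and lapse, each of these is componentwise harmonic and decays at the rate $\delta$, with $\operatorname{div}X'=0$ and $\operatorname{div}g'$ tied to $dN'$ and $d\,\operatorname{tr}g'$. Next I would expand the four linearized boundary equations of $D\tilde\Pi$ at $(\tilde g^{(4)}_0,0)$, using $a=1$, $b=0$, $da=0$ there (the first derivatives of $a$ in (3.3) vanish at $F=0$): the first two reduce to $g'|_{TS^2}=0$ and $(H_{S^2})'_{g'}=0$, involving only $g'$, while the last two express the tangential components along $S^2$ of the linearized second fundamental form $K'=\delta^*_{\mathrm{eucl}}X'$ in terms of $G$ and $d_{S^2}G$.

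I would then split the argument. \emph{Static sector.} The equations for $(g',N')$ — componentwise harmonic, the linearized Bianchi relation, and $g'|_{TS^2}=0$, $(H_{S^2})'_{g'}=0$ — form exactly the linearization at the round flat data of the static vacuum Bartnik boundary problem, whose linearized boundary map is injective near $(g_{S^2},2)$; this is the static analogue of Theorem 1.2 proved in [A2]. Hence $g'\equiv 0$, and then $N'\equiv 0$ by the decoupled harmonic lapse equation, the decay, and the maximum principle. \emph{Shift and lapse sector.} With $g'=N'=0$ only $X'$ (componentwise harmonic, divergence free, decaying) and $G$ (harmonic, decaying) remain. Expanding $X'$ and $G$ in spherical harmonics on $\mathbb R^3\setminus B^3$, where the $\ell$-mode of each decays like $r^{-\ell-1}$, the condition $\operatorname{div}X'=0$ together with the two boundary relations becomes, mode by mode, a finite linear system for the mode coefficients. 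I would show this system has only the trivial solution: the complementing boundary condition verified in \S4.2 — equivalently, the Fredholmness given by Theorem 4.4, which applies since Assumption 3.1 holds for $\tilde g^{(4)}_0$ — already forces all sufficiently high modes to vanish, so the kernel is spanned by finitely many low modes, and these are eliminated by direct computation. Therefore $X'\equiv 0$ and $G\equiv 0$, i.e. $(h^{(4)},G)=0$.

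The hard part is the last step of the shift and lapse sector. One must compute, with correct signs, the $G$-dependent lower order terms in the linearized third and fourth boundary conditions (the derivatives of the scalars $a,b$ of (3.3) at $F=0$), and then check that the resulting finite mode-system is non-degenerate. The delicate point is that the low modes of $(X',G)$ which superficially satisfy the boundary relations are precisely the ones generated by time-translation deformations whose generators lie in $T\mathcal D$ (modulo the time-like Killing field of Minkowski), and these have already been quotiented out in passing from $\mathcal E$ to the chart $\mathcal C$ in \S3; the bookkeeping has to make this cancellation explicit. This is the flat-base-point analogue of the integration-by-parts identities of \S5, and it can alternatively be organized as a single Green's identity $\int_M(|\nabla h^{(4)}|^2+|\nabla G|^2)=\oint_{S^2}(\cdots)$, reducing triviality of the kernel to showing that the boundary quadratic form on $S^2$ is non-positive. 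The static sector is routine given [A2], and the passage from $D\tilde\Pi$ to $D\Pi_1$, hence to Theorem 1.2, is immediate once the kernel is known to be trivial.
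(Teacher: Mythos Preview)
Your static sector matches the paper's proof: both invoke [A2] to obtain $v=0$ and $h=\delta^*_{g_0}Z$ from the first two equations of (6.5) and the first two boundary conditions in (6.7), and then kill $Z$ via the gauge $\beta_{g_0}\delta^*_{g_0}Z=0$. One small correction: $N'=0$ comes out of [A2] simultaneously with the structure of $h$, not from a separate maximum-principle argument afterwards; a harmonic function on $\mathbb R^3\setminus B$ with decay but no boundary condition on $S^2$ need not vanish, so your phrasing there is misleading. (Once $h=0$, the gauge relation $\delta_{g_0}h+\tfrac12 d(\mathrm{tr}\,h+2v)=0$ does give $dv=0$ directly, which is the honest route if you want to separate the two steps.)

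For the shift sector the paper does \emph{not} use a spherical-harmonic mode analysis. It takes precisely the route you list as an ``alternative'': pair $\delta_{g_0}\delta^*_{g_0}Y=0$ with $Y$ and integrate by parts. The boundary term $\int_{S^2}\delta^*_{g_0}Y(\mathbf n,Y)$ is then rewritten using the third and fourth boundary conditions of (6.7) together with $\delta_{g_0}Y=0$; after a short computation it becomes exactly $-\tfrac12\int_{S^2}(\mathrm{tr}_{\partial M}\delta^*_{g_0}Y)^2$. Thus
\[
0=\int_M|\delta^*_{g_0}Y|^2+\tfrac12\int_{S^2}(\mathrm{tr}_{\partial M}\delta^*_{g_0}Y)^2,
\]
so $Y$ is Killing on $\mathbb R^3\setminus B$ and, by decay, $Y=0$; the vanishing of $\mathrm{tr}_{\partial M}\delta^*_{g_0}Y$ on $S^2$ then forces $G|_{S^2}=0$ via the third boundary condition, and $G=0$ by harmonicity and decay. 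This bypasses entirely the ``hard part'' you identify: there is no finite low-mode system to check, and no appeal to Fredholmness to truncate high modes. Your proposed mode-by-mode argument is not wrong in principle, but it is left incomplete (the low-mode nondegeneracy is asserted, not verified), whereas the integration-by-parts identity closes the argument in a few lines.
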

\begin{proof}
Assume that $(h^{(4)},G)\in\text{Ker}(D\tilde\Pi_{(\tilde g_0^{(4)},0)})$. Since $(h^{(4)},G)\in T_{(\tilde g_0^{(4)},0)}\mathcal C$, it must be a vacuum deformation, in the sense that the following equations hold on $M$:
\begin{equation}
\begin{cases}
(Ric)'_{h^{(4)}}=0\\
\Delta G=0.
\end{cases}
\end{equation}
In addition, since elements in $\mathcal C$ satisfy the gauge condition $\beta_{\tilde g_0^{(4)}}g^{(4)}=0$, the same equation holds for the deformation $h^{(4)}$:
\begin{equation}
\beta_{\tilde g_0^{(4)}}h^{(4)}=0\quad\text{on }M.
\end{equation}
Since $(h^{(4)}, G)$ preserves the Bartnik boundary data, linearization of the boundary equations are zero, i.e.
\begin{equation}
\begin{cases}
h_{\partial M}=0\\
(H_{\partial M})'_{h^{(4)}}=0\\
(tr_{\partial M}K)'_{h^{(4)}}+2G=0\\
(\omega_{\mathbf n})'_{h^{(4)}}+\nabla_{\partial M}G=0.
\end{cases}
\end{equation}
As we know, a stationary spacetime metric is uniquely determined by the data set $(g,X,N)$ on the hypersurface $M$, where $g$ is the induced metric on $M$, $X$ is the shift vector and $N$ is the lapse function. For the standard metric $\tilde g_0^{(4)}$, the corresponding data is $(g_0,0,1)$ with $g_0$ being the flat (Euclidean) metric on $\mathbb R^3\setminus B$. Thus the deformation $h^{(4)}$ can be decomposed as $h^{(4)}=(h,Y,v)$, where $h$ is the deformation of the Riemannian metric $g_0$, $Y$ is the deformation of the shift vector and $v$ is that of the lapse function.

The vacuum condition $Ric_{g^{(4)}}=0$ is equivalent to the following equations in terms of $(g,X,N)$ on $M$ (cf.[Mo]):
\begin{equation*}
\begin{cases}
K=-\frac{1}{2N}L_Xg\\
Ric_g+(trK)K-2K^2-\frac{1}{N}D^2N-\frac{1}{N}L_XK=0\\
\frac{1}{N}\Delta N+|K|^2-\frac{1}{N}tr(L_XK)=0\\
\delta K+d(trK)=0.	
\end{cases}
\end{equation*}

It is easy to linearize the equations above at $(g_0,0,1)$ and obtain a system in terms of $(h,Y,v)$, which is equivalent to equation (6.2), given by,
\begin{equation}
\begin{cases}
	Ric'_h-D^2v=0\\
	\Delta_{g_0} v=0\\
	\delta_{g_0}\delta_{g_0}^*Y-d\delta_{g_0} Y=0\\
	\Delta G=0.
\end{cases}
\quad\text{on}~M.
\end{equation}
The gauge equation (6.3) is equivalent to (cf.\S7.7 for the proof)
\begin{equation}
\begin{cases}
\delta_{g_0}Y=0\\
\delta_{g_0}h+\frac{1}{2}d(tr_{g_0}h+2v)=0,
\end{cases}\quad\text{on }M.
\end{equation}
The boundary conditions (6.4) are equivalent to:
\begin{equation}
\begin{cases}
	h_{\partial M}=0\\
	H'_h=0\\
	tr_{\partial M}\delta_{g_0}^*Y+2G=0\\
	[\delta_{g_0}^*Y(\mathbf n)]^T+\nabla_{g_0^T}G=0.
\end{cases}
\quad\text{on}~\partial M.
\end{equation}
In the last equation above, we use the superscript $'$$'$$^T$$'$$'$ to denote the restriction of tensors to the tangent bundle of $\partial M$. It is proved in [A2] that the first two equations in (6.5) combined with the first two boundary conditions in (6.7) imply that $v=0$ and $h=\delta_{g_0}^* Z$ for some vector field $Z\in C^{m+1,\alpha}_{\delta}(M)$ vanishing on $\partial M$. Additionally, $h$ must satisfy the gauge equation in (6.6). It follows that $\beta_{g_0}\delta_{g_0}^*Z=0$, which further implies that $Z=0$. So we obtain $h=0$ on $M$. 

It remains to prove $Y=0$ and $G=0$. The third equation in (6.5) and the first equation in (6.6) together imply:
\begin{equation*}
	\delta_{g_0}\delta_{g_0}^*Y=0\quad\text{on}~M.
\end{equation*}
Pair the equation above with $Y$. Then integration by parts gives,
\begin{equation}
	\begin{split}
		0&=\int_M \langle\delta_{g_0}\delta_{g_0}^*Y,Y\rangle_{g_0}dvol_{g_0}\\
		&=\int_M|\delta_{g_0}^*Y|^2-\int_{\partial M}\delta_{g_0}^*Y(\mathbf n,Y)-\int_{\infty}\delta^*_{g_0}Y(\mathbf n,Y)\\
		&=\int_M|\delta_{g_0}^*Y|^2-\int_{\partial M}\delta_{g_0}^*Y(\mathbf n,Y^T)-\int_{\partial M}\delta_{g_0}^*Y(\mathbf n,\mathbf n) Y^{\perp}.
			\end{split}
\end{equation}
In the boundary integral, we have decomposed $Y$ as $Y=Y^T+Y^{\perp}\mathbf n$, with $Y^{\perp}=\langle Y,\mathbf n\rangle_{g_0}$. In the second line, the boundary term at infinity $\int_{\infty}=\lim_{r\to\infty}\int_{S_r}$ is zero because the decay rate of $[\delta^*_{g_0}Y(\mathbf n,Y)]$ is $2\delta+1>2$.

For the second term in the last line of (6.8), one has,
\begin{equation*}
\begin{split}
\delta_{g_0}^*Y(\mathbf n,Y^T)&=\langle[\delta_{g_0}^*Y(\mathbf n)]^T,Y^T\rangle
=-\langle\nabla_{g_0^T}G,Y^T\rangle\\
&=-div_{g_0^T}(G\cdot Y^T)+G\cdot div_{g_0^T}Y^T\\
&=-div_{g_0^T}(G\cdot Y^T)-\frac{1}{2}(tr_{\partial M}\delta^*_{g_0}Y)\cdot div_{g_0^T}Y^T.
\end{split}
\end{equation*}
Here the second equality comes from the last boundary equation in (6.7) and the last equality is based on the third boundary equation in (6.7).
As for the last term in (6.8), notice that we have the following equality on the boundary:
\begin{equation*}
0=\delta_{g_0} Y=-\delta^*_{g_0}Y(\mathbf n,\mathbf n)-tr_{\partial M}\delta_{g_0}^*Y,
\end{equation*}
so that 
$\delta^*_{g_0}Y(\mathbf n,\mathbf n)=-tr_{\partial M}\delta_{g_0}^*Y$.
In addition,
$
tr_{\partial M}\delta^*_{g_0}Y
=tr_{\partial M}\delta_{g_0}^*Y^T+tr_{\partial M}\delta^*_{g_0}(Y^{\perp}\mathbf n)
=div_{g_0^T}Y^T+H_{g_0}Y^{\perp}
=div_{g_0^T}Y^T+2Y^{\perp}.
$
Substituting these computations into the integral equation (6.8) gives,
\begin{equation*}
	\begin{split}
		0&=\int_M|\delta_{g_0}^*Y|^2\\
		&\quad\quad+\int_{\partial M}\frac{1}{2}(div_{g_0^T}Y^T+2Y^{\perp})\cdot div_{g_0^T}Y^T+\int_{\partial M}(div_{g_0^T}Y^T+2Y^{\perp}) Y^{\perp}\\
		&=\int_M|\delta_{g_0}^*Y|^2+\frac{1}{2}\int_{\partial M}(div_{g_0^T}Y^T)^2+4Y^{\perp}\cdot div_{g_0^T}Y^T+4(Y^{\perp})^2\\
		&=\int_M|\delta_{g_0}^*Y|^2+\frac{1}{2}\int_{\partial M}(div_{g_0^T}Y^T+2Y^{\perp})^2\\
		&=\int_M|\delta_{g_0}^*Y|^2+\frac{1}{2}\int_{\partial M}(tr_{\partial M}\delta^*_{g_0}Y)^2.
	\end{split}
\end{equation*}
It immediately follows,
 \begin{equation}
 \begin{split}
 &\delta_{g_0}^*Y=0\quad\text{on }M,\\
 &tr_{\partial M}\delta_{g_0}^*Y=0\quad\text{on }\partial M.
 \end{split}
 \end{equation}
 The first equation above implies that $Y$ is a Killing vector field of the flat metric $g_0$ on $\mathbb R^3\setminus B$. In addition $Y$ must be asymptotically zero since it comes from a deformation of the asymptotically flat metrics in $\mathcal C$. Thus it follows $Y=0$ on $M$.
The boundary equation in (6.9) implies that $G=0$ on $\partial M$ according to (6.7). Furthermore, $G$ is asymptotically zero and harmonic according to (6.5). So $G=0$ on $M$. This completes the proof.
\end{proof}
Next, we prove that the Fredholm map $D\tilde\Pi_{(\tilde g_0^{(4)},0)}$ is of index 0 by showing the operator $D\mathcal F=(D\mathcal L,D\mathcal B)$ defined in \S4 has index 0 at $(\tilde g_0^{(4)},0)$. Here we use the idea in [A1] --- the boundary data in $D\mathcal B$ can be continuously deformed to a new collection of self-adjoint boundary data. 

Define the new boundary operator $D\mathcal{\tilde B}$ as follows:
\begin{equation}
\begin{split}
D\mathcal {\tilde B}: T_{(0,\tilde g_0^{(4)})}[\mathcal S\times C^{m,\alpha}_{\delta}(M)]\rightarrow \mathbb B\\
D\mathcal {\tilde B}(h^{(4)},G)= (\quad
&h_{\partial M},\\
&\nabla_{\mathbf n}(h^{(4)}(\mathbf n,\mathbf n)),\\
&\mathbf n(G),\\
&-\frac{1}{2}\nabla_{\mathbf n}[h^{(4)}(\partial_t)]^T,\\
&-\frac{1}{2}\nabla_{\mathbf n}h^{(4)}(\partial_t,\partial_t),\\
&-\nabla_{\mathbf n}h^{(4)}(\mathbf n)^T,\\
&-\nabla_{\mathbf n}h^{(4)}(\partial_t,\mathbf n)
\quad).
\end{split}
\end{equation}
Let $\mathcal N$ denote the space of deformations $(h^{(4)},G)$ of $(\tilde g_0^{(4)},0)$ in $\mathcal S\times C^{m,\alpha}_{\delta}(M)$ that are in the kernel of the boundary operator $D\mathcal{\tilde B}$, i.e.
\begin{equation*}
\begin{split}
\mathcal N=\{~(h^{(4)},G)\in T_{(0,\tilde g_0^{(4)})}[ \mathcal S\times C_{\delta}^{m,\alpha}(M)]:~~D\mathcal{\tilde B}(h^{(4)},G)=0\quad\}.
\end{split}
\end{equation*}
\begin{lemma}
The operator $D\mathcal L: \mathcal N\rightarrow [(S_2)_{\delta+2}^{m-2,\alpha}\times C_{\delta+2}^{m-2,\alpha}](M)$, given by
\begin{equation*}
\begin{split}
D\mathcal L(h^{(4)},G)=(D^*_{\tilde g_0^{(4)}}D_{\tilde g_0^{(4)}}h^{(4)},\Delta G),
\end{split}
\end{equation*}
is formally self-adjoint.
\end{lemma}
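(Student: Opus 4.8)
The plan is to show that the symmetric bilinear form produced by Green's identity vanishes on $\mathcal N\times\mathcal N$, which is exactly formal self-adjointness in the sense used in \S5.1 and in [A1]. Since $D\mathcal L$ acts diagonally --- as the connection Laplacian $D^*_{\tilde g_0^{(4)}}D_{\tilde g_0^{(4)}}$ on the $h^{(4)}$-slot and as the scalar Laplacian $\Delta$ on the $G$-slot, both formally self-adjoint differential operators, and for the flat metric $\tilde g_0^{(4)}$ and time-independent fields both reducing on $M$ to the (componentwise) flat Laplacian --- integration by parts over $M$ gives, for any two pairs $(h_1^{(4)},G_1),(h_2^{(4)},G_2)$ in the relevant weighted H\"older spaces,
\begin{equation*}
\int_M\big[\langle D\mathcal L(h_1^{(4)},G_1),(h_2^{(4)},G_2)\rangle-\langle(h_1^{(4)},G_1),D\mathcal L(h_2^{(4)},G_2)\rangle\big]\,dvol_{g_0}=\mathcal Q_{\partial M}+\mathcal Q_\infty
\end{equation*}
(up to an overall sign that is irrelevant), where $\mathcal Q_{\partial M}=\int_{\partial M}\big[\langle\nabla_{\mathbf n}h_2^{(4)},h_1^{(4)}\rangle-\langle\nabla_{\mathbf n}h_1^{(4)},h_2^{(4)}\rangle+\mathbf n(G_2)G_1-\mathbf n(G_1)G_2\big]$ and $\mathcal Q_\infty=\lim_{r\to\infty}\int_{S_r}(\cdots)$. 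The term at infinity vanishes by the same decay estimate used in \S5.1: the integrand decays at rate $2\delta+1>2$ while $\mathrm{area}(S_r)\sim r^2$. So everything reduces to showing $\mathcal Q_{\partial M}=0$ whenever both pairs lie in $\mathcal N=\ker D\mathcal{\tilde B}$.

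To evaluate $\mathcal Q_{\partial M}$ I would work along $\partial M=S^2$ in the orthonormal frame $\{E_0=\partial_t,\ E_1=\mathbf n,\ E_2,E_3\}$ with $E_2,E_3$ tangent to $\partial M$. For the flat metric with spherical boundary this frame can be chosen parallel along the radial geodesics (the rays through $S^2$ are geodesics, so $\nabla_{\mathbf n}\mathbf n=0$, $\nabla_{\mathbf n}E_0=0$, and the rotational tangent frame scaled by $r^{-1}$ satisfies $\nabla_{\mathbf n}E_a=0$); hence $(\nabla_{\mathbf n}h^{(4)})_{\alpha\beta}=\mathbf n\big(h^{(4)}(E_\alpha,E_\beta)\big)$ on $\partial M$, which identifies the various ``$\nabla_{\mathbf n}$'' entries of $D\mathcal{\tilde B}$ with frame components of the covariant normal derivative. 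Expanding the tensor inner product in this frame, $\langle\nabla_{\mathbf n}h_2^{(4)},h_1^{(4)}\rangle$ is a finite sum, over the ten index pairs $(\alpha,\beta)$, of monomials $\pm\,\mathbf n\big(h_2^{(4)}(E_\alpha,E_\beta)\big)\,h_1^{(4)}(E_\alpha,E_\beta)$. The boundary conditions cutting out $\mathcal N$ are arranged so that each monomial vanishes: for the three purely tangential pairs ($\alpha,\beta\in\{2,3\}$) the Dirichlet condition $h_{\partial M}=0$ (line 1 of $D\mathcal{\tilde B}$) kills the factor $h_1^{(4)}(E_\alpha,E_\beta)$, while for the seven pairs involving index $0$ or index $1$ the corresponding Neumann-type line of $D\mathcal{\tilde B}$ --- line 5 for $(0,0)$, line 7 for $(0,1)$, line 4 for $(0,2),(0,3)$, line 2 for $(1,1)$, line 6 for $(1,2),(1,3)$ --- kills the factor $\mathbf n\big(h_2^{(4)}(E_\alpha,E_\beta)\big)$. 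Thus $\langle\nabla_{\mathbf n}h_2^{(4)},h_1^{(4)}\rangle=0$ on $\partial M$, and by the same reasoning (interchanging $h_1^{(4)},h_2^{(4)}$) so is $\langle\nabla_{\mathbf n}h_1^{(4)},h_2^{(4)}\rangle$; finally $\mathbf n(G_j)=0$ by line 3 of $D\mathcal{\tilde B}$ kills the scalar contribution. Hence $\mathcal Q_{\partial M}=0$, so $D\mathcal L$ is formally self-adjoint on $\mathcal N$.

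The only step beyond routine computation is the component bookkeeping in the middle paragraph: one must check that in the expansion of $\langle\nabla_{\mathbf n}h_2^{(4)},h_1^{(4)}\rangle$ exactly one of the two factors in every monomial is forced to vanish by a condition in $D\mathcal{\tilde B}$ --- i.e.\ that $D\mathcal{\tilde B}$ really does prescribe Dirichlet data on the tangential block $h_{\partial M}$ and Neumann data on each of the remaining (transverse) components, with no overlaps or omissions among the eleven scalar conditions (matching the ten components of $h^{(4)}$ plus the one on $G$). The one geometric input needed to make this clean is the identification $(\nabla_{\mathbf n}h^{(4)})_{\alpha\beta}=\mathbf n(h^{(4)}_{\alpha\beta})$, which rests on the flatness of $\tilde g_0^{(4)}$ and the totally geodesic radial fibration of $S^2=\partial M$; in a general (non-flat) setting one would instead have to absorb the extra zeroth-order terms $h^{(4)}(\nabla_{\mathbf n}E_\alpha,E_\beta)$, but here they simply vanish. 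Once this is in place, the vanishing of $\mathcal Q_{\partial M}$ --- and hence the lemma --- is purely algebraic.
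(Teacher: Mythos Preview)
Your proof is correct and follows essentially the same route as the paper: integration by parts yields the boundary bilinear form $B[(k^{(4)},J),(h^{(4)},G)]=\langle\nabla_{\mathbf n}k^{(4)},h^{(4)}\rangle_{\tilde g_0^{(4)}}+\mathbf n(J)G$, the term at infinity dies by decay, and the conditions defining $\mathcal N$ kill every term on $\partial M$. The paper dispatches the last step in one sentence (``it is easy to verify\ldots''), whereas you spell out the component-by-component bookkeeping in an adapted frame and justify the identification $(\nabla_{\mathbf n}h^{(4)})_{\alpha\beta}=\mathbf n(h^{(4)}_{\alpha\beta})$ via radial parallelism --- this extra care is welcome but not a different method.
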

\begin{proof}
Let $(h^{(4)},G),(k^{(4)},J)$ denote two deformations in $\mathcal N$. Integration by parts yields:
\begin{equation*}
\begin{split}
&\int_{M}\langle D\mathcal L(h^{(4)},G),(k^{(4)},J)\rangle_{\tilde g^{(4)}_0}dvol_{g_0}
=\int_{M}\langle D\mathcal L(k^{(4)},J),(h^{(4)},G)\rangle_{\tilde g^{(4)}_0}\\
&\quad\quad\quad\quad\quad+\int_{\partial M}B[(k^{(4)},J),(h^{(4)},G)]-B[(h^{(4)},G),(k^{(4)},J)].
\end{split}
\end{equation*}
Here the boundary term at infinity is zero because of the decay behavior of the the deformations. The bilinear form $B$ is given by,
\begin{equation*}
B[(k^{(4)},J),(h^{(4)},G)]=\langle\nabla_{\mathbf n}k^{(4)},h^{(4)}\rangle_{\tilde g_0^{(4)}}+\mathbf n(J)G.
\end{equation*}
It is easy to verify that the terms above are zero because $(h^{(4)},G)$ and $(k^{(4)},J)$ make all the boundary terms listed in (6.10) vanish. Therefore $D\mathcal L$ is formally self-adjoint.
\end{proof}

Thus it follows that the new operator $(D\mathcal L,D\mathcal{\tilde B})$ is of index 0.
Next we show that the boundary data in $D\mathcal B$ can be deformed continuously through elliptic boundary data to $D\mathcal{\tilde B}$. Define a family of boundary operator $D\mathcal B_t,~t\in[0,1]$ as follows,
\begin{equation*}
\begin{split}
D\mathcal B_t: T_{(\tilde g_0^{(4)},0)}[&\mathcal S\times C^{m,\alpha}_{\delta}(M)]\rightarrow \mathbb B\\
D\mathcal B_t(h^{(4)},G)= (\quad
&h_{\partial M},\\
&(1-t)(H_{\partial M})'_{h^{(4)}}+t\nabla_{\mathbf n}(h^{(4)}(\mathbf n,\mathbf n)),\\
&(1-t)(tr_{\partial M}K)'_{h^{(4)}}+t\mathbf n(G),\\
&-\frac{1}{2}[\nabla_{\mathbf n}[h^{(4)}(\partial_t)](e_i)+(1-t)\nabla_{e_i}[h^{(4)}(\partial_t)](\mathbf n)]+(1-t)e_i(G),\\
&-\frac{1}{2}\nabla_{\mathbf n}h^{(4)}(\partial_t,\partial_t)+(1-t)[\frac{1}{2}\nabla_{\mathbf n}tr_Mh+\delta h(\mathbf n)],\\
&-\nabla_{\mathbf n}h^{(4)}(\mathbf n)^T+(1-t)[-\nabla_{e_i}h^{(4)}(e_i)^T+\frac{1}{2}\nabla_{g_0^T}(trh^{(4)})],\\
&-\nabla_{\mathbf n}h^{(4)}(\mathbf n,\partial_t)-(1-t)\nabla_{e_i}h^{(4)}(e_i,\partial_t)\quad).
\end{split}
\end{equation*}
Here $\{e_i\},~i=2,3$ denotes a local orthonormal basis of $T(\partial M)$. It is easy to check that $D\mathcal B_1=D\mathcal{\tilde B}$. When $t=0$, it is obvious that the first three lines of the boundary data above give the first three of the linearized Bartnik boundary data. It is also easy to check that the fourth line above (at $t=0$) has the same principal part as the linearized Bartnik boundary term $(\omega_{\mathbf n}+d_{\partial M}G)'$. Moreover, the last three lines above are respectively the $\mathbf n$, tangential ($\partial M$) and $\partial_t$ components of the gauge term $\beta_{\tilde g_0^{(4)}}h^{(4)}$ when $t=0$. Therefore,  $D\mathcal B_0=D\mathcal B$.
\begin{lemma}
The operator $(D\mathcal L, D\mathcal B_t)$ is elliptic for $t\in[0,1]$.
\end{lemma}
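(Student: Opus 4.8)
The plan is to observe that the interior operator $D\mathcal L$ is the \emph{same} for every $t$ — namely $D\mathcal L(h^{(4)},G)=(D^*_{\tilde g_0^{(4)}}D_{\tilde g_0^{(4)}}h^{(4)},\Delta G)$ — so its principal symbol is $a(\xi)I_{11\times 11}=|\xi|^2 I_{11\times 11}$ at the flat reference metric (here $N=1$, $X=0$ in (4.2)), which is positive off the zero section. Hence the properly elliptic condition of \S4.1 holds for $(D\mathcal L,D\mathcal B_t)$ for all $t$ with no further work, and the entire content of the lemma is that $D\mathcal B_t$ satisfies the complementing (Lopatinski--Shapiro) boundary condition for each $t\in[0,1]$. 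I would run the reduction of \S4.2 verbatim with the family $D\mathcal B_t$ in place of $D\mathcal B$: since $\det L(\eta+z\mu)=(z^2+|\eta|^2)^{11}$ has the unique root $z_+=i|\eta|$ with positive imaginary part (of multiplicity $11$) and the adjugate $L^*(\xi)=a(\xi)^{10}I_{11\times 11}$ is a scalar multiple of the identity, the complementing condition for $D\mathcal B_t$ is equivalent to non-singularity of the $11\times 11$ boundary symbol matrix $B_t(\eta+z_+\mu)$ for every nonzero tangential $\eta$ and unit normal $\mu$. Because the $h_{\partial M}$-component of $D\mathcal B_t$ does not depend on $t$, the first three rows of $B_t$ are again $[\,0_{3\times 8}\mid I_3\,]$ in the ordering $(G,h^{(4)}_{00},h^{(4)}_{01},h^{(4)}_{02},h^{(4)}_{03},h^{(4)}_{11},h^{(4)}_{12},h^{(4)}_{13},h^{(4)}_{22},h^{(4)}_{23},h^{(4)}_{33})$, so $\det B_t=\pm\det\tilde B_t$, where $\tilde B_t$ is the $8\times 8$ block built from the symbols of the remaining seven boundary operators acting on $(G,h^{(4)}_{00},h^{(4)}_{01},h^{(4)}_{02},h^{(4)}_{03},h^{(4)}_{11},h^{(4)}_{12},h^{(4)}_{13})$. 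By the rotational invariance of the flat metric in the directions of $\partial M$ and by homogeneity it suffices to evaluate at $\eta=(0,1,0)$, $\mu=(1,0,0)$, i.e. at $\xi=(i,1,0)$.

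The key structural point is that $D\mathcal B_t$ is affine-linear in $t$, hence so are $B_t(\xi)$ and $\tilde B_t(\xi)$, and $\det\tilde B_t(i,1,0)$ is a polynomial in $t$ of degree at most $8$; the task is to show it has no zero on $[0,1]$. At $t=0$ we have $D\mathcal B_0=D\mathcal B$, and the computation of \S4.2 (together with the elementary reductions of \S7.4) gives $\det\tilde B(\eta+z_+\mu)$ equal to a nonzero multiple of $|\eta|^8$ — this is also immediate from Theorem 4.1. At $t=1$ we have $D\mathcal B_1=D\tilde{\mathcal B}$, which apart from $h_{\partial M}$ consists of the eight normal derivatives $\nabla_{\mathbf n}$ of $h^{(4)}_{11}$, $G$, $h^{(4)}_{02}$, $h^{(4)}_{03}$, $h^{(4)}_{00}$, $h^{(4)}_{12}$, $h^{(4)}_{13}$, $h^{(4)}_{01}$ respectively, each unknown appearing exactly once; thus $\tilde B_1=(iz_+)\cdot P$ for a permutation-type matrix $P$ with nonzero constant entries, so $\det\tilde B_1=\pm(iz_+)^8\cdot\mathrm{const}=\pm\,\mathrm{const}\cdot|\eta|^8\neq 0$. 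For $0<t<1$ I would apply the same row- and column-operations as in \S7.4 to bring $\tilde B_t(i,1,0)$ to a reduced form in which the $t$-dependence — entering only through the $(1-t)$-weighted tangential-derivative terms in lines $4$--$7$ of $D\mathcal B_t$ — is transparent, and then verify that the resulting polynomial in $t$ is a nonzero multiple of $|\eta|^8$ with no root on $[0,1]$. Equivalently, one can argue directly: $B_t(\eta+z_+\mu)w=0$ forces $h^{(4)}_{22}=h^{(4)}_{23}=h^{(4)}_{33}=0$ from the $h_{\partial M}$-rows, after which the remaining eight relations — solved in an order beginning with the rows that already coincide with the $t=1$ form — successively force the other eight components to vanish, so $w=0$.

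The step I expect to be the main obstacle is precisely this last one: confirming that no cancellation among the $(1-t)$-weighted off-diagonal (tangential-derivative) entries makes $\det\tilde B_t(\eta+z_+\mu)$ vanish at some intermediate $t$. It is a bookkeeping-heavy $8\times 8$ determinant computation, but it is structurally identical to — and no harder than — the $t=0$ calculation already carried out in \S4.2 and \S7.4, and the $t$-linearity of $B_t$ keeps the dependence on $t$ entirely under control; once this is in hand, $(D\mathcal L,D\mathcal B_t)$ is properly elliptic and satisfies the complementing condition for every $t\in[0,1]$, which is the assertion of the lemma.
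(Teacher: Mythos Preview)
Your plan is exactly the paper's: since $D\mathcal L$ has symbol $|\xi|^2I_{11\times 11}$ at the flat background, the whole content is the complementing condition, and with $L^*$ scalar this reduces to showing $\det\tilde B_t(\eta+z_+\mu)\neq 0$ for $z_+=i|\eta|$, $\eta\neq 0$, $t\in[0,1]$. Your block reduction to $\tilde B_t$ and the observation that $B_t$ is affine in $t$ are correct and match the paper.

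The substantive gap is that you stop at the plan. Checking $t=0$ and $t=1$ only establishes non-vanishing at the endpoints of a degree-$\leq 8$ polynomial in $t$, which says nothing about intermediate $t$; and your ``alternative'' successive-elimination argument does not actually go through for $0<t<1$, because none of the remaining eight rows coincide with their $t=1$ (pure Neumann) form --- every one carries a nontrivial $(1-t)$-tangential piece that couples the unknowns, so there is no triangular order in which to peel them off without first knowing the determinant is nonzero. The paper simply does the $8\times 8$ computation you anticipate: writing $\tilde B_t$ explicitly (with $X=0$, $N=1$) and expanding gives
\[
\det B_t(\xi)=\tfrac{1}{32}\bigl[t\xi_1^4-(2+t)(1-t)^2\xi_1^2(\xi_2^2+\xi_3^2)\bigr]\cdot\bigl[2(2+t)(1-t)^2(\xi_2^2+\xi_3^2)\xi_1^2-4t\xi_1^4\bigr],
\]
so at $\xi_1=i|\eta|$, $\xi_2^2+\xi_3^2=|\eta|^2$ one obtains
\[
\det B_t(\eta+z_+\mu)=-\tfrac{1}{32}\bigl[t+(2+t)(1-t)^2\bigr]\cdot\bigl[2(2+t)(1-t)^2+4t\bigr]\,|\eta|^8.
\]
Each bracketed factor is a sum of nonnegative terms, strictly positive on $[0,1]$, and this is what closes the argument. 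Your proposal would be complete once you carry out this determinant (your normalization $\eta=(0,1,0)$ is a legitimate shortcut), but the non-vanishing on all of $[0,1]$ genuinely requires the explicit factored form --- it does not follow from the structural observations alone.
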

\begin{proof}
One can carry out the same proof as in \S4. Since the shift vector and lapse function of $\tilde g_0^{(4)}$ are simply $X=0$ and $N=1$, the principal symbol of the interior operator is (cf.equation (4.2)),
\begin{equation*}
L(\xi)=(\xi_1^2+\xi_2^2+\xi_3^2)I_{11\times 11}.
\end{equation*}
Set $(X,N)=(0,1)$ in equation (4.4) to compute the principal matrix of boundary operator $D\mathcal B_0$. Make a linear combination with the matrix of $D\mathcal B_1$. One can derive the principal matrix of the boundary operator $D\mathcal B_t~(t\in[0,1])$ as,
\begin{equation*}
B_t(\xi)=
\begin{bmatrix}
0_{3\times 8}&
\resizebox{.08\textwidth}{!}{$
\begin{matrix}
1&0&0\\
0&1&0\\
0&0&1
\end{matrix}
$}
\\
(\tilde B_t)_{8\times8}&*
\end{bmatrix},
\end{equation*}
where $\tilde B_t$ is given by (up to a factor of $-32^{-1}$),
\begin{equation*}
\resizebox{.99\textwidth}{!}{$
\begin{bmatrix}
0&0&0&0&0&t\xi_1&-(1-t)\xi_2&-(1-t)\xi_3\\
-t\xi_1&0&0&(1-t)\xi_2&(1-t)\xi_3&0&0&0\\
-2(1-t)\xi_2&0&(1-t)\xi_2&\xi_1&0&0&0&0\\
-2(1-t)\xi_3&0&(1-t)\xi_3&0&\xi_1&0&0&0\\
0&\xi_1&0&0&0&(1-t)\xi_1&2(1-t)\xi_2&2(1-t)\xi_3\\
0&(1-t)\xi_2&0&0&0&-(1-t)\xi_2&2\xi_1&0\\
0&(1-t)\xi_3&0&0&0&-(1-t)\xi_3&0&2\xi_1\\
0&0&\xi_1&(1-t)\xi_2&(1-t)\xi_3&0&0&0
\end{bmatrix}.$}
\end{equation*}
The determinant of $B_t(\xi)$ is
\begin{equation*}
\text{det}B_t(\xi)=\frac{1}{32}[t\xi_1^4-(2+t)(1-t)^2\xi_1^2(\xi_2^2+\xi_3^2)]\cdot[2(2+t)(1-t)^2(\xi_2^2+\xi_3^3)\xi_1^2-4t\xi_1^4]
.\end{equation*}
Let $\xi=z\mu+\eta$, where $z=i|\eta|^2$, the root of det$L(z\mu+\eta)=0$ with positive imaginary part. Then 
\begin{equation*}
\text{det}(B_t(z\mu+\eta))=-\frac{1}{32}[t+(2+t)(1-t)^2]\cdot[2(2+t)(1-t)^2+4t]|\eta|^8,
\end{equation*}
which obviously never vanishes for $t\in[0,1]$, $\eta\neq 0$. Thus the complementing boundary condition holds for all $t\in[0,1]$, which completes the proof.
\end{proof}

To conclude, we have the following theorem:
\begin{theorem}
The boundary map $\tilde\Pi$ is locally a diffeomorphism near $(\tilde g_0^{(4)},0)$.
\end{theorem}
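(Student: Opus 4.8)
The plan is to deduce Theorem~6.5 from the inverse function theorem for maps of Banach manifolds, so that everything reduces to showing that the Fredholm operator $D\tilde\Pi_{(\tilde g_0^{(4)},0)}$ is a linear isomorphism. By Theorem~3.3 the space $\mathcal C$ is a smooth Banach manifold near $(\tilde g_0^{(4)},0)$, the target $\mathbf B$ is a Banach space, and $\tilde\Pi$ is smooth; and since $\tilde g_0^{(4)}$ is static, Assumption~3.1 holds at $\tilde g_0^{(4)}$ (cf.~\S7.5), so the chart $(\mathcal C,\tilde\Pi)$ of \S3 is available and $D\tilde\Pi_{(\tilde g_0^{(4)},0)}\colon T_{(\tilde g_0^{(4)},0)}\mathcal C\to\mathbf B$ is Fredholm. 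By Theorem~6.1 its kernel is trivial. Hence it suffices to prove that $D\tilde\Pi_{(\tilde g_0^{(4)},0)}$ has index $0$: a Fredholm operator with trivial kernel and index $0$ has trivial cokernel, hence is a bounded linear bijection, hence (open mapping theorem) a topological isomorphism, and then the inverse function theorem gives that $\tilde\Pi$ is a local diffeomorphism near $(\tilde g_0^{(4)},0)$.

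To compute that index I would first relate it to the index of the elliptic boundary operator $D\mathcal F=(D\mathcal L,D\mathcal B)$ of \S4. The gauged system (3.2) was designed in \S3 precisely so that, at the linearized level, the interior equation $D\mathcal L(h^{(4)},G)=0$ together with the gauge component $\beta_{\tilde g_0^{(4)}}h^{(4)}=0$ of $D\mathcal B$ forces $\beta_{\tilde g_0^{(4)}}h^{(4)}=0$ throughout $M$ (the Bianchi argument of (3.4)--(3.5), using invertibility of $\beta_{\tilde g_0^{(4)}}\delta^*_{\tilde g_0^{(4)}}$), hence places $(h^{(4)},G)$ in $T_{(\tilde g_0^{(4)},0)}\mathcal C$; while the remaining, Bartnik-data, components of $D\mathcal B$ restrict on $T_{(\tilde g_0^{(4)},0)}\mathcal C$ to exactly $D\tilde\Pi$. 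This is the content of the local-chart construction of \S3 (and of the general framework of [Az]), and it identifies $\ker D\mathcal F$ with $\ker D\tilde\Pi$ and $\mathrm{coker}\,D\mathcal F$ with $\mathrm{coker}\,D\tilde\Pi$, so that $\mathrm{ind}\,D\tilde\Pi_{(\tilde g_0^{(4)},0)}=\mathrm{ind}\,D\mathcal F_{(\tilde g_0^{(4)},0)}$.

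It then remains to show $\mathrm{ind}\,D\mathcal F_{(\tilde g_0^{(4)},0)}=0$, which is exactly what Lemmas~6.2 and~6.4 are set up to give. By Lemma~6.4 the family $(D\mathcal L,D\mathcal B_t)$, $t\in[0,1]$, is a norm-continuous path of elliptic, hence Fredholm, boundary value problems between fixed Banach spaces joining $(D\mathcal L,D\mathcal B_0)=(D\mathcal L,D\mathcal B)=D\mathcal F$ to $(D\mathcal L,D\mathcal B_1)=(D\mathcal L,D\mathcal{\tilde B})$; by homotopy invariance of the Fredholm index, $\mathrm{ind}\,D\mathcal F=\mathrm{ind}(D\mathcal L,D\mathcal{\tilde B})$. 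By Lemma~6.2 the problem $(D\mathcal L,D\mathcal{\tilde B})$ is formally self-adjoint, i.e.\ its formal adjoint boundary value problem (interior operator $D\mathcal L$, boundary conditions $D\mathcal{\tilde B}=0$) coincides with itself; since the Fredholm index of an elliptic boundary value problem and of its formal adjoint are negatives of one another, $\mathrm{ind}(D\mathcal L,D\mathcal{\tilde B})=0$. Chaining these equalities with the previous paragraph gives $\mathrm{ind}\,D\tilde\Pi_{(\tilde g_0^{(4)},0)}=0$, and together with $\ker D\tilde\Pi_{(\tilde g_0^{(4)},0)}=0$ from Theorem~6.1 we conclude that $D\tilde\Pi_{(\tilde g_0^{(4)},0)}$ is an isomorphism, proving the theorem.

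I expect the main obstacle to be the index identity $\mathrm{ind}\,D\tilde\Pi=\mathrm{ind}\,D\mathcal F$: one must argue carefully that restricting the Bartnik-data components of the full gauged elliptic system to the (linearized) solution manifold $T_{(\tilde g_0^{(4)},0)}\mathcal C$ changes neither the kernel nor the cokernel, which is where the gauge-fixing mechanism of \S3 --- invertibility of $\beta_{\tilde g_0^{(4)}}\delta^*_{\tilde g_0^{(4)}}$ and the transversality it produces between $D\mathcal L$ together with the gauge boundary condition and $T\mathcal C$ --- has to be invoked cleanly. A secondary, more routine point is the standard fact that a formally self-adjoint elliptic boundary value problem has vanishing index; this rests on identifying its formal adjoint problem with itself through the Green's formula underlying Lemma~6.2, together with the Fredholm alternative for elliptic boundary value problems.
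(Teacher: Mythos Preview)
Your proposal is correct and follows essentially the same route as the paper: use Theorem~6.1 for trivial kernel, use the self-adjoint endpoint (Lemma~6.2) together with the elliptic homotopy of boundary operators (what the paper labels Lemma~6.3) to obtain index~$0$ for $D\mathcal F$, transfer this to $D\tilde\Pi$ via the gauge-fixing identification of \S3, and conclude by the inverse function theorem. The only difference is that you spell out the identification $\mathrm{ind}\,D\tilde\Pi=\mathrm{ind}\,D\mathcal F$ more carefully than the paper, which simply asserts it in the sentence preceding Lemma~6.2; your caution about this step is appropriate, but the mechanism you describe (the Bianchi argument forcing $\beta_{\tilde g_0^{(4)}}h^{(4)}=0$ on all of $M$ and thereby matching kernels and cokernels) is exactly what the paper's \S3 framework provides.
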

\begin{proof}
From Lemma 6.2, 6.3 and the homotopy invariance of the index, it follows that the index of the boundary map $\tilde\Pi$ is 0 at $(\tilde g^{(4)}_0,0)$. In addition, it is proved in Theorem 6.1 that the kernel of $D\tilde\Pi_{(\tilde g_0^{(4)},0)}$ is trivial. Thus, the linearization $D\tilde\Pi_{(\tilde g_0^{(4)},0)}$ is an isomorphism. Then the inverse function theorem in Banach spaces gives the theorem.
\end{proof}

Now the equivalence relation between the maps $\tilde\Pi$ and $\Pi$ (cf.\S3) gives Theorem 1.2.
%%%%%%%%%%%%%%%%%%%%%%%%%%%%%%%%%%%%%%%%%%%%%%%%%%%%%%%%%%%%%%%%%%%%%%%%%%%%%%%%%%%%%%%%%%%%%%%%%%%%%%%%%%%%%%%%%%%%%%%%%%%%%%%%%%%%%%%%%%%%%%%%%%%%%%%%%%%%%%%%%%%%%%%%%%%%%%%%%%%%%%%%%%%%%%%%%%%%%%%%%%%%%%%%%
%%%%%%%%%%%%%%%%%%%%%%%%%%%%%%%%%%%%%%%%%%%%%%%%%%%%%%%%%%%%%%%%%%%%%%%%%%%%%
%%%%%%%%%%%%%%%%%%%%%%%%%%%%%%%%%%%%%%%%%%%%%%%%%%%%%%%%%%%%%%%%%%%%%%%%%%%%%%%%

\section{Appendix}
\subsection{Notations}$~~$

On a Riemannian manifold $M\cong\mathbb R^3\setminus B^3$, we can pull back the standard coordinates $\{x^i\} (i=1,2,3)$ and the radius function $r$ from $\mathbb R^3\setminus B^3$ to $M$ under a chosen diffeomorphism. Then given $m\in\mathbb{N}$, and $\alpha,\delta\in\mathbb{R}$, the weighted H\"older spaces on $M$ are defined as,
\begin{equation*}
    \begin{split}
&C^m_{\delta}(M)=\{\text{functions}~v~\text{on}~M: ||v||_{C^m_{\delta}}=\Sigma_{k=0}^msup~r^{k+\delta}|\nabla^kv|<\infty\},\\
&C^{m,\alpha}_{\delta}(M)=\{\text{functions}~v~\text{on}~M:\\ &\quad\quad\quad\quad\quad||v||_{C^m_{\delta}}+sup_{x,y}[\text{min}(r(x),r(y))^{m+\alpha+\delta}
\frac{\nabla^mv(x)-\nabla^mv(y)}{|x-y|^{\alpha}}]
<\infty\},\\
&Met^{m,\alpha}_{\delta}(M)=\{\text{metrics}~g~\text{on}~M: (g_{ij}-\delta_{ij})\in C^{m,\alpha}_{\delta}(M)\},\\
&T^{m,\alpha}_{\delta}(M)=\{\text{vector fields}~X~\text{on}~M: X^i\in C^{m,\alpha}_{\delta}(M)\},\\
&(\wedge_1)^{m,\alpha}_{\delta}(M)=\{1-\text{forms}~\sigma~\text{on}~M: \sigma_{i}\in C^{m,\alpha}_{\delta}(M)\},\\
&D^{m,\alpha}_{\delta}(M)=\{\text{diffeomorphisms }\Psi:M\to M, \Psi(x^1,x^2,x^3)=(y^1,y^2,y^3): \\
&\quad\quad\quad\quad\quad\quad\quad\quad\quad\quad\quad\quad\quad\quad\quad\quad\quad\quad\quad\quad y^i-x^i\in C^{m,\alpha}_{\delta}(M)\}.
\end{split}
\end{equation*}
A tensor field is called asymptotically trivial (or zero) at the rate of $\delta$, if it belongs to one of the spaces above. It is well-known that Laplace-type operators with Dirichlet boundary condition are Fredholm when acting on these weighted H\"older spaces (cf.[LM],[Mc],[MV]).

On the compact manifold $\partial M$, we use the standard H\"older norm to define various Banach spaces of tensor fields as following,
\begin{equation*}
    \begin{split}
&C^m(\partial M)=\{\text{functions}~v~\text{on}~M: ||v||_{C^m}=\Sigma_{k=0}^msup~|\nabla^kv|<\infty\},\\
&C^{m,\alpha}(\partial M)=\{\text{functions}~v~\text{on}~M:~||v||_{C^m}+sup_{x,y}[
\frac{\nabla^mv(x)-\nabla^mv(y)}{|x-y|^{\alpha}}]
<\infty\},\\
&Met^{m,\alpha}(\partial M)=\{\text{metrics}~g~\text{on}~\partial M: g_{ab}\in C^{m,\alpha}(\partial M)\},\\
&(\wedge_1)^{m,\alpha}(\partial M)=\{1-\text{forms}~\sigma~\text{on}~\partial M: \sigma_{a}\in C^{m,\alpha}(\partial M)\}.
\end{split}
\end{equation*}
On the boundary $\partial M$ we use the index $1$ to denote the normal direction to $\partial M$ and $2,3$ the tangential direction. So in the definition above, the subscripts $a,b=2,3$. 

Let $(V^{(4)},g^{(4)})$ be a stationary spacetime such that $V^{(4)}\cong \mathbb R\times M$, with coordinates $\{t,x^i\}$ such that $\mathbb R$ is parametrized by $t$ and $M=\{t=0\}$. Assume $\partial_t$ is the time-like Killing vector field. In such a spacetime, a tensor field $\tau$ is called \textit{time-independent} if $L_{\partial_t}\tau=0$. In this case, we can think of $\tau$ as a tensor field on $M$. Define the following spaces of time-independent tensor fields in $(V^{(4)},g^{(4)})$:
\begin{equation*}
    \begin{split}
&C^{m,\alpha}_{\delta}(V^{(4)})=\{\text{functions }f\text{ in }V^{(4)}:~\partial_t f=0,~f\in C^{m,\alpha}_{\delta}(M)\},\\
&T^{m,\alpha}_{\delta}(V^{(4)})=\{\text{vector fields }Y\text{ in }V^{(4)}:~L_{\partial_t}Y=0,~Y^{\gamma}\in C^{m,\alpha}_{\delta}(M)\},\\
&S^{m,\alpha}_{\delta}(V^{(4)})=\{\text{symmetric 2-tensor fields }h^{(4)}\text{ in }V^{(4)}:~L_{\partial_t}h^{(4)}=0,~h^{(4)}_{\gamma\beta}\in C^{m,\alpha}_{\delta}(M)\},\\
&(\wedge_1)^{m,\alpha}_{\delta}(V^{(4)})=\{\text{1-forms }\omega\text{ in }V^{(4)}:~L_{\partial_t}\omega=0,~\omega_{\gamma}\in C^{m,\alpha}_{\delta}(M)\}.
\end{split}
\end{equation*}
In the definition above, the greek letters $\gamma,\beta=0,1,2,3$, where the index $0$ denotes the $\partial_t$ component of the tensor field and $1,2,3$ the tangential (to $M$) components. 
%%%%%%%%%%%%%%%%%%%%%%%%%%%%%%%%%%%%%%%%%%%%%%%%%%%%%%%%%%%%%%%%%%%%%%%% 
\subsection{Scalar fields $a,b$ in the time translation}$~~$

As in proposition 2.1, set up the diffeomorphism 
\begin{equation*}
\begin{split}
&\Phi_f: V^{(4)}\to V^{(4)}\\
&\Phi_f(t,x_1,x_2,x^3)=(t+f,x^1,x^2,x^3).
\end{split}
\end{equation*}
It maps the hypersurface $M=\{t=0\}$ to a new slice $\Phi_f(M)=\{t=f\}$ in $V^{(4)}$. Recall that $\mathbf N$ is the future-pointing time-like unit normal vector to the slice $M\subset (V^{(4)},g^{(4)})$ and $\mathbf n$ is the outward unit normal to $\partial M\subset (M,g)$. Their correspondence in the pull-back spacetime $(V^{(4)},\Phi_f^*(g^{(4)}))$ are $\hat{\mathbf N}$ and $\hat{\mathbf n}$. Thus $d\Phi_f(\hat{\mathbf N})$ and $d\Phi_f(\hat{\mathbf n})$ are the unit normal vector fields to the new slice $\Phi_f(M)$ and its boundary $\Phi_f(\partial M)$ in $(V^{(4)},g^{(4)})$ respectively.
Since $f|_{\partial M}=0$, $\partial M=\Phi_f(\partial M)$ and $d\Phi_f|_{\partial M}=Id_{\partial M}$. So along the boundary $\partial M$ the unit normal vectors $(\mathbf{\hat N},\mathbf{\hat n})$ must be mapped to the same subspace as $(\mathbf N,\mathbf n)$ in $TV^{(4)}$, i.e.
\begin{equation*}
\begin{split}
d\Phi_f(\mathbf{\hat N}),d\Phi_f(\mathbf{\hat n})\in \text{span}\{\mathbf N,\mathbf n\}.
\end{split}
\end{equation*}
Therefore there exist scalar fields $a,b,c,d$ belongs to $C^{m,\alpha}(\partial M)$ so that 
\begin{equation}
\begin{cases}
d\Phi_f(\mathbf{\hat N})=a\mathbf N+b\mathbf n,\\
d\Phi_f(\mathbf{\hat n})=c\mathbf N+d\mathbf n.
\end{cases}
\end{equation}
In addition, by the definition of pull-back metric we have 
\begin{equation*}
\begin{split}
&\langle d\Phi_f(\mathbf{\hat N}),d\Phi_f(\mathbf{\hat N})\rangle_{g{(4)}}=\langle\hat{\mathbf N},\hat{\mathbf N}\rangle_{\Phi_f^*g^{(4)}}=-1;\\
&\langle d\Phi_f(\mathbf{\hat n}),d\Phi_f(\mathbf{\hat n})\rangle_{g{(4)}}=\langle\hat{\mathbf n},\hat{\mathbf n}\rangle_{\Phi_f^*g^{(4)}}=1;\\
&\langle d\Phi_f(\mathbf{\hat N}),d\Phi_f(\mathbf{\hat n})\rangle_{g{(4)}}=\langle\hat{\mathbf N},\hat{\mathbf n}\rangle_{\Phi_f^*g^{(4)}}=0.
\end{split}
\end{equation*}
So we obtain the following equations for $(a,b,c,d)$, 
\begin{equation}
\begin{cases}
-a^2+b^2=-1,\\
-c^2+d^2=1,\\
-ac+bd=0.
\end{cases}
\end{equation}
It further implies that $a^2=d^2$ and $b^2=c^2$. Without loss of generality (up to the choice of directions), we can assume,
$$a=d>0,~b=c>0.$$

Moreover, the vector field $d\Phi_f(\mathbf{\hat N})$ must be orthogonal to $d\Phi_f(\partial_{x^i})$ with respect to $g^{(4)}$ because
\begin{equation}
\begin{split}
\langle d\Phi_f(\mathbf{\hat N}),d\Phi_f(\partial_{x^i})\rangle_{g^{(4)}}=\langle \mathbf{\hat N},\partial_{x^i}\rangle_{\Phi^*g^{(4)}}=0,~\forall i=1,2,3.
\end{split}
\end{equation}
By the definition of $\Phi_f$, it follows that $d\Phi_f(\partial_{x^i})=(\partial_if)\partial_t+\partial_{x^i}$. On the other hand, it is easy to verify that,
\begin{equation}\langle \partial_t-X+N^2\nabla f,(\partial_if)\partial_t+\partial_{x^i}\rangle_{g^{(4)}}=0,~\forall i=1,2,3.
\end{equation}
where $\nabla f$ denotes the gradient of $f$ with respect to the metric $g^{(4)}$. Thus, equations (7.3) and (7.4) imply that $d\Phi_f(\mathbf{\hat N})$ must be proportional to $\partial_t-X+N^2\nabla f$. We make the direction choice so that $d\Phi_f(\mathbf{\hat N})$ is future pointing, then
\begin{equation*}
\begin{split}
d\Phi_f(\mathbf{\hat N})&=\frac{\partial_t-X+N^2\nabla f}{\sqrt{-||\partial_t-X+N^2\nabla f||^2_{g^{(4)}}}}\\
&=\frac{\partial_t-X+N^2\nabla f}{N\sqrt{1+2X(f)-||N^2\nabla f||^2_{g^{(4)}}}}\\
&=\frac{\partial_t-X+N^2\nabla_{g} f}{N\sqrt{1+2X(f)+X(f)^2-N^2||\nabla_{g} f||^2_g}}\\
&=\frac{\partial_t-X+N^2\nabla_{g} f}{N\sqrt{(1+X(f))^2-N^2||\nabla_{g} f||^2}_g},
\end{split}
\end{equation*}
where $\nabla_{g}f$ denotes the gradient of $f$ with respect to the induced metric $g$ on $M$.
Therefore, according to the first equation in (7.1), we obtain
\begin{equation*}
\begin{split}
a&=-\langle\mathbf{N},d\Phi_f(\mathbf{\hat N})\rangle_{g^{(4)}}\\
&=-\frac{g^{(4)}(\partial_t-X,\partial_t-X+N^2\nabla_g f)}{N^2\sqrt{(1+X(f))^2-N^2||\nabla_{g} f||^2}}\\
&=\frac{1+X(f)}{\sqrt{(1+X(f))^2-N^2||\nabla_g f||^2}}.
\end{split}
\end{equation*}
In the above, the second equality is because $\mathbf N=(\partial_t-X)/N$ according to the 3+1 formalism (2.1). Moreover, since $f$ is chosen to be vanishing on $\partial M$, so $\nabla_gf=\mathbf n(f)\cdot\mathbf n$ on the boundary. Thus $||\nabla_g f||_g=\mathbf n(f)$ on the boundary $\partial M$, and $X(f)|_{\partial M}=\langle X,\mathbf n\rangle \mathbf n(f)$ and consequently,
\begin{equation*}
\begin{split}
a=\frac{1+\langle X,\mathbf n\rangle \mathbf n(f)}{\sqrt{[1+\langle X,\mathbf n\rangle \mathbf n(f)]^2-N^2|\mathbf n(f)|^2}}\quad\text{on }\partial M,
\end{split}
\end{equation*}
which is the formula (2.11). Based on (7.2), we easily derive the formula for $b$ as follows,
\begin{equation*}
\begin{split}
b=\frac{N\mathbf n(f)}{\sqrt{[1+\langle X,\mathbf n\rangle \mathbf n(f)]^2-N^2|\mathbf n(f)|^2}}.
\end{split}
\end{equation*}
%%%%%%%%%%%%%%%%%%%%%%%%%%%%%%%%%%%%%%%%%%%%%%%%%%%%%%%%%%%%%%%%%%%%%%%%
\subsection{Linearization of boundary operator $\mathcal B$}$~~$

For simplicity of notation, we will write $h$ instead of $h^{(4)}$ in this section. Subindex 0 denotes the $\partial_t$ direction in $V^{(4)}$, index 1 denotes the outward normal direction to $\partial M$ and $2,3$ denote the tangential directions on $\partial M$.
\\
\\
1.With respect to the deformation $h$, linearization of $g_{\partial M}$ is easily seen to be:
\begin{equation*}
(g_{\partial M})'_h=(h_{22},h_{23},h_{33}).
\end{equation*}
\\
2.Linearization of $H_{\partial M}$:

By the formula of the linearization of mean curvature (cf.\S 5.2 in [Az] for example), one has
\begin{equation*}
\begin{split}
2(H_{\partial M})'_h=-2\partial_2h_{12}-2\partial_3h_{13}+\partial_1(h_{22}+h_{33})+O_0(h).
\end{split}
\end{equation*}
\\
3.Linearization of the second fundamental form $K$:

The defining equation for $K$ is
$$K_{ij}=-\frac{1}{2N}L_{X}g_{ij},$$
where $g_{ij}$ denotes the Riemannian metric induced from $g^{(4)}$ on $M$ and $X$ is the shift vector on $M$. Notice that $X$ is the dual of the $1-$form $g^{(4)}(\partial_t)|_{M}$, i.e. $X^{\flat}_i=g^{(4)}_{0i}$. 
Variation of $K$ with respect to $h$ is given by,
$$(K'_h)_{ij}=-\frac{1}{2N}(L_{(X)'}g_{ij}+L_{X}h_{ij})+O_0(h).$$
As for the variation $X'$, it is given by,
\begin{equation*}
\begin{split}
X^i&=g^{ik}g^{(4)}_{0k},\\
(X')^i&=\tilde h^{ik}g^{(4)}_{0k}+g^{ik}h_{0k},
\end{split}
\end{equation*}
where $\tilde h$ is the variation of the inverse $g^{ij}$. It is easy to see that
$$\tilde h^{ij}g_{jk}=-g^{ij}h_{jk}.$$
Therefore,
\begin{equation*}
\begin{split}
L_{X'}g_{ij}&=g_{ik}\nabla^M_j (X')^{k}+g_{jk}\nabla^M_i (X')^{k}\\
&=\nabla^M_j\{g_{ik} (X')^{k}\}+\nabla^M_i\{g_{jk} (X')^{k}\}\\
&=\nabla^M_j\{g_{il} \tilde h^{lk}g^{(4)}_{0k}+g_{il}g^{lk}h_{0k}\}+\nabla^M_i\{g_{jl} \tilde h^{lk}g^{(4)}_{0k}+g_{jl}g^{lk}h_{0k}\}\\
&=\nabla^M_j\{-h_{il} g^{lk}g^{(4)}_{0k}+h_{0i}\}+\nabla^M_i\{-h_{jl} g^{lk}g^{(4)}_{0k}+h_{0j}\}\\
&=\nabla^M_j\{-h_{il} X^l+h_{0i}\}+\nabla^M_i\{-h_{jl}X^l+h_{0j}\},
\end{split}
\end{equation*}
and
\begin{equation*}
\begin{split}
L_Xh_{ij}&=\nabla^M_{X}h_{ij}+h_{ik}\nabla^M_{j} X^{k}+h_{jk}\nabla^M_{i} X^{k}=\nabla^M_{X}h_{ij}+O_0(h).
\end{split}
\end{equation*}
In the above the connection $\nabla^M$ denotes the covariant derivative with respect to the induced metric $g$ on $M$.
Thus,
\begin{equation*}
\begin{split}
(K'_h)_{ij}=-\frac{1}{2N}[\partial_i h_{0j}+\partial_j h_{0i}+\partial_{X}h_{ij}-X^l(\partial_ih_{jl}+\partial_jh_{il})]+O_0(h),
\end{split}
\end{equation*}
and consequently,
\begin{equation*}
\begin{split}
[tr_{\partial M}K]'_h&=tr_{\partial M}(K'_h)+O_0(h)\\
&=-\frac{1}{2N}[2\partial_2 h_{02}+2\partial_3 h_{03}+\partial_{X}(h_{22}+h_{33})-X^l(2\partial_2h_{2l}+2\partial_3h_{3l})]+O_0(h),\\
\big((\omega_{\mathbf n})'_h\big)_i&=[(K(\mathbf n)|_{\partial M})'_h]_i\\
&=[K'_h(\mathbf n)|_{\partial M}]_i+O_0(h)\\
&=-\frac{1}{2N}[\partial_1 h_{0i}+\partial_i h_{01}+\partial_{X}h_{1i}-X^l(\partial_1h_{il}+\partial_ih_{1l})]+O_0(h),\\
\text{with}~i=2,3.
\end{split}
\end{equation*}
\\
4. Linearization of the gauge term $\beta_{\tilde g^{(4)}}g^{(4)}$.\\

Obviously $[\beta_{\tilde g^{(4)}}g^{(4)}]'_h=\beta_{\tilde g^{(4)}}h$. Take an arbitrary vector field $Y\in T^{m,\alpha}_{\delta}(V^{(4)})$. Then $\beta_{g^{(4)}}h(Y)=\delta_{g^{(4)}}h(Y)+\frac{1}{2}Y(trh),$ where the two terms on the right side are computed as,
\begin{equation*}
\begin{split}
\delta_{g^{(4)}} h(Y)&=\nabla_{\mathbf N}h(\mathbf N, Y)-\Sigma_{k=1,2,3}\nabla_k h(Y)_k+O_0(h)\\
&=\frac{1}{N^2}\nabla_{\partial_t-X} h(\partial_t-X,Y)-\Sigma_{k=1,2,3}\nabla_k h(Y)_k+O_0(h)\\
&=-\frac{1}{N^2}\partial_{X} h(\partial_t-X,Y)-\Sigma_{k=1,2,3}\partial_k h(Y)_k+O_0(h),\\
trh&=-h(\mathbf N,\mathbf N)+h_{11}+h_{22}+h_{33}\\
&=-\frac{1}{N^2}h(\partial_t-X,\partial_t-X)+h_{11}+h_{22}+h_{33}\\
&=-\frac{1}{N^2}(h_{00}+X^iX^jh_{ij}-2X^lh_{0l})+h_{11}+h_{22}+h_{33}.
\end{split}
\end{equation*}
Therefore, for $i=1,2,3$, the $i$th component of the linearized gauge term is given by,
\begin{equation*}
\begin{split}
[\beta_{\tilde g^{(4)}}h^{(4)}]_i=&-\frac{1}{2N^2}[\partial_ih_{00}+X^kX^j\partial_ih_{kj}-2X^l\partial_ih_{0l}]+\frac{1}{2}\partial_i(h_{11}+h_{22}+h_{33})\\
&-\frac{1}{N^2}[\partial_{X} h_{0i}-X^k\partial_Xh_{ki}]-\Sigma_{k=1,2,3}\partial_k h_{ki}+O_0(h);
\end{split}
\end{equation*}
and the $\partial_t$ component of the gauge term is given by,
\begin{equation*}
\begin{split}
[\beta_{\tilde g^{(4)}}h^{(4)}]_0=-\frac{1}{N^2}[\partial_{X} h_{00}-X^k\partial_Xh_{k0}]-\Sigma_{k=1,2,3}\partial_k h_{k0}+O_0(h).
\end{split}
\end{equation*}

Summing up all the computations above, we obtain the boundary symbol matrix $\tilde B$ in equation (4.4), given by,
\begin{equation*}
\begin{split}
&\tilde B=\\
&\resizebox{.99\textwidth}{!}{$-\frac{1}{2N^2}
\begin{bmatrix}
0&0&0&0&0~&0&2N^2\xi_2&2N^2\xi_3\\
0&0&0&2N\xi_2&2N\xi_3~&0&-2N\xi_2X^1&-2N\xi_3X^1\\
-2N^3\xi_2&0&N\xi_2&N\xi_1&0~&-N\xi_2X^1&N\xi_3X^3&-N\xi_2X^3\\
-2N^3\xi_3&0&N\xi_3&0&N\xi_1~&-N\xi_3X^1&-N\xi_3X^2&N\xi_2X^2\\
0&\xi_1&2S-2\xi_1X^1&-2\xi_1X^2&-2\xi_1X^3~&\xi_1X^1X^1+N^2\xi_1-2S X^1&\xi_1X^1X^2-2S X^2+2N^2\xi_2&\xi_1X^1X^3-2S X^3+2N^2\xi_3\\
0&\xi_2&-2\xi_2X^1&2S-2\xi_2X^2&-2\xi_2X^3~&\xi_2X^1X^1-N^2\xi_2&\xi_2X^1X^2-2S X^1+2N^2\xi_1&\xi_2X^1X^3\\
0&\xi_3&-2\xi_3X^1&-2\xi_3X^2&2S-2\xi_3X^3~&\xi_3X^1X^1-N^2\xi_3&\xi_3X^1X^2&\xi_3X^1X^3-2S X^1+2N^2\xi_1\\
0&2S&2(N^2\xi_1-SX^1)&2(N^2\xi_2-SX^2)&2(N^2\xi_3-SX^3)&0&0&0
\end{bmatrix}$},
\end{split}
\end{equation*}
inside which $S=\xi_1X^1+\xi_2X^2+\xi_3X^3$. 
\subsection{Calculation of the boundary symbol matrix }$~~~$

To compute the determinant of the matrix $\tilde B$, we can first simplify it by factoring out the common factors in every row: factor out $(-2N^2)$ from the first row, $2N$ from the second row, $N$ from the third and fourth row, and $2$ from the last row. Then we obtain that $\text{det}\tilde B=-\frac{1}{32N^{11}}\text{det}\hat B$, with $\hat B$ given by,
$$
\hat B=\resizebox{.97\textwidth}{!}{$
\begin{bmatrix}
0&0&0&0&0~&0&-\xi_2&-\xi_3\\
0&0&0&\xi_2&\xi_3~&0&-\xi_2X^1&-\xi_3X^1\\
-2N^2\xi_2&0&\xi_2&\xi_1&0~&-\xi_2X^1&\xi_3X^3&-\xi_2X^3\\
-2N^2\xi_3&0&\xi_3&0&\xi_1~&-\xi_3X^1&-\xi_3X^2&\xi_2X^2\\
0&\xi_1&2S-2\xi_1X^1&-2\xi_1X^2&-2\xi_1X^3~&\xi_1X^1X^1+N^2\xi_1-2S X^1&\xi_1X^1X^2-2S X^2+2N^2\xi_2&\xi_1X^1X^3-2S X^3+2N^2\xi_3\\
0&\xi_2&-2\xi_2X^1&2S-2\xi_2X^2&-2\xi_2X^3~&\xi_2X^1X^1-N^2\xi_2&\xi_2X^1X^2-2S X^1+2N^2\xi_1&\xi_2X^1X^3\\
0&\xi_3&-2\xi_3X^1&-2\xi_3X^2&2S-2\xi_3X^3~&\xi_3X^1X^1-N^2\xi_3&\xi_3X^1X^2&\xi_3X^1X^3-2S X^1+2N^2\xi_1\\
0&S&N^2\xi_1-SX^1&N^2\xi_2-SX^2&N^2\xi_3-SX^3~&0&0&0
\end{bmatrix}$},
$$
We now carry out the following row and column operation to simplify $\hat B$. First, multiply the first row of $\hat B$ by $-X^1$ and then add it to the second row. Multiply the first row by $2N^2$ and then add it to the fifth row. The matrix becomes:
$$
\hat B_1=\resizebox{.97\textwidth}{!}{$
\begin{bmatrix}
0&0&0&0&0~&0&-\xi_2&-\xi_3\\
0&0&0&\xi_2&\xi_3~&0&0&0\\
-2N^2\xi_2&0&\xi_2&\xi_1&0~&-\xi_2X^1&\xi_3X^3&-\xi_2X^3\\
-2N^2\xi_3&0&\xi_3&0&\xi_1~&-\xi_3X^1&-\xi_3X^2&\xi_2X^2\\
0&\xi_1&2S-2\xi_1X^1&-2\xi_1X^2&-2\xi_1X^3~&\xi_1X^1X^1+N^2\xi_1-2S X^1&\xi_1X^1X^2-2S X^2&\xi_1X^1X^3-2S X^3\\
0&\xi_2&-2\xi_2X^1&2S-2\xi_2X^2&-2\xi_2X^3~&\xi_2X^1X^1-N^2\xi_2&\xi_2X^1X^2-2S X^1+2N^2\xi_1&\xi_2X^1X^3\\
0&\xi_3&-2\xi_3X^1&-2\xi_3X^2&2S-2\xi_3X^3~&\xi_3X^1X^1-N^2\xi_3&\xi_3X^1X^2&\xi_3X^1X^3-2S X^1+2N^2\xi_1\\
0&S&N^2\xi_1-SX^1&N^2\xi_2-SX^2&N^2\xi_3-SX^3~&0&0&0
\end{bmatrix}$}.
$$
In $\hat B_1$, multiply the second row by $(-N^2)$ and add it to the last row:
$$
\hat B_2=\resizebox{.97\textwidth}{!}{$
\begin{bmatrix}
0&0&0&0&0~&0&-\xi_2&-\xi_3\\
0&0&0&\xi_2&\xi_3~&0&0&0\\
-2N^2\xi_2&0&\xi_2&\xi_1&0~&-\xi_2X^1&\xi_3X^3&-\xi_2X^3\\
-2N^2\xi_3&0&\xi_3&0&\xi_1~&-\xi_3X^1&-\xi_3X^2&\xi_2X^2\\
0&\xi_1&2S-2\xi_1X^1&-2\xi_1X^2&-2\xi_1X^3~&\xi_1X^1X^1+N^2\xi_1-2S X^1&\xi_1X^1X^2-2S X^2&\xi_1X^1X^3-2S X^3\\
0&\xi_2&-2\xi_2X^1&2S-2\xi_2X^2&-2\xi_2X^3~&\xi_2X^1X^1-N^2\xi_2&\xi_2X^1X^2-2S X^1+2N^2\xi_1&\xi_2X^1X^3\\
0&\xi_3&-2\xi_3X^1&-2\xi_3X^2&2S-2\xi_3X^3~&\xi_3X^1X^1-N^2\xi_3&\xi_3X^1X^2&\xi_3X^1X^3-2S X^1+2N^2\xi_1\\
0&S&N^2\xi_1-SX^1&-SX^2&-SX^3~&0&0&0
\end{bmatrix}$}.
$$
In $\hat B_2$, multiply the second column by $N^2$ and add it to the sixth column. Then multiply the second column by $X^i$ and add it to the $(2+i)$th column ($i=1,2,3$):
$$
\hat B_3=\resizebox{.97\textwidth}{!}{$
\begin{bmatrix}
0&0&0&0&0~&0&-\xi_2&-\xi_3\\
0&0&0&\xi_2&\xi_3~&0&0&0\\
-2N^2\xi_2&0&\xi_2&\xi_1&0~&-\xi_2X^1&\xi_3X^3&-\xi_2X^3\\
-2N^2\xi_3&0&\xi_3&0&\xi_1~&-\xi_3X^1&-\xi_3X^2&\xi_2X^2\\
0&\xi_1&2S-\xi_1X^1&-\xi_1X^2&-\xi_1X^3~&\xi_1X^1X^1+2N^2\xi_1-2S X^1&\xi_1X^1X^2-2S X^2&\xi_1X^1X^3-2S X^3\\
0&\xi_2&-\xi_2X^1&2S-\xi_2X^2&-\xi_2X^3~&\xi_2X^1X^1&\xi_2X^1X^2-2S X^1+2N^2\xi_1&\xi_2X^1X^3\\
0&\xi_3&-\xi_3X^1&-\xi_3X^2&2S-\xi_3X^3~&\xi_3X^1X^1&\xi_3X^1X^2&\xi_3X^1X^3-2S X^1+2N^2\xi_1\\
0&S&N^2\xi_1&0&0~&N^2S&0&0
\end{bmatrix}$}.
$$
In $\hat B_3$, multiply the $i$th column by $X^1$ and add it to the $(i+3)$th column ($i=3,4,5$). Then multiply the second column by $X^i$ and add it to column $(i+1),~(i=1,2,3)$.
$$
\hat B_4=\resizebox{.97\textwidth}{!}{$
\begin{bmatrix}
0&0&0&0&0~&0&-\xi_2&-\xi_3\\
0&0&0&\xi_2&\xi_3~&0&\xi_2X^1&\xi_3X^1\\
-2N^2\xi_2&0&\xi_2&\xi_1&0~&0&\xi_3X^3+\xi_1X^1&-\xi_2X^3\\
-2N^2\xi_3&0&\xi_3&0&\xi_1~&0&-\xi_3X^2&\xi_2X^2+\xi_1X^1\\
0&\xi_1&2S-\xi_1X^1&-\xi_1X^2&-\xi_1X^3~&2N^2\xi_1&-2S X^2&-2S X^3\\
0&\xi_2&-\xi_2X^1&2S-\xi_2X^2&-\xi_2X^3~&0&2N^2\xi_1&0\\
0&\xi_3&-\xi_3X^1&-\xi_3X^2&2S-\xi_3X^3~&0&0&2N^2\xi_1\\
0&S&N^2\xi_1&0&0~&N^2S+N^2\xi_1X^1&0&0
\end{bmatrix}$}.
$$
In $\hat B_4$, multiply column $2$ by $X^i$ and add it to column $(i+2)$, $(i=1,2,3)$.  Multiply column 1 by $(2N^2)^{-1}$ and add it to column 3. Then multiply the first row by $X^1$ and add it to row 2:
$$
\hat B_5=\resizebox{.97\textwidth}{!}{$
\begin{bmatrix}
0&0&0&0&0~&0&-\xi_2&-\xi_3\\
0&0&0&\xi_2&\xi_3~&0&0&0\\
-2N^2\xi_2&0&0&\xi_1&0~&0&\xi_3X^3+\xi_1X^1&-\xi_2X^3\\
-2N^2\xi_3&0&0&0&\xi_1~&0&-\xi_3X^2&\xi_2X^2+\xi_1X^1\\
0&\xi_1&2S&0&0~&2N^2\xi_1&-2S X^2&-2S X^3\\
0&\xi_2&0&2S&0~&0&2N^2\xi_1&0\\
0&\xi_3&0&0&2S~&0&0&2N^2\xi_1\\
0&S&N^2\xi_1+SX^1&SX^2&SX^3~&N^2S+N^2\xi_1X^1&0&0
\end{bmatrix}$}.
$$
This is the matrix given in (4.5).
%%%%%%%%%%%%%%%%%%%%%%%%%%%%%%%%%%%%%%%%%%%%%%%%%%%%%%%%%%%%%%%%%%%%%%%%
\subsection{Calculation in the projection formalism}$~~$

Take a general stationary metric in $V^{(4)}$ expressed in the projection formalism as,
$$g^{(4)}=-u^2(dt+\theta)^2+g_S.$$
In this section, we use $\nabla$ to denote the Levi-Civita connection of $g^{(4)}$, and $\nabla_{g_S}$ to denote that of $g_S$. We first state two simple facts.

1. Since $\partial_t$ is a Killing vector field, it follows that for any vector field $Y\in TV^{(4)}$, we have
$\langle\nabla_{\partial t}\partial_t, Y\rangle=-\langle\nabla_Y\partial_t,\partial_t\rangle=uY(u)$.
Thus,
\begin{equation}
\nabla_{\partial_t}\partial_t=u\nabla u.
\end{equation}
Notice that $\nabla u$ is a vector field on $S$ because $u$ is independent of $t$.

2. For any horizontal vector fields $v,w\in TS$, one has $\langle v,\partial_t\rangle=0$, $L_{\partial t}v=0$, and hence
$$\langle\nabla_vw,\partial_t\rangle=-\langle w,\nabla_v\partial_t\rangle=\langle v,\nabla_w\partial_t\rangle=-\langle \nabla_wv,\partial_t\rangle.$$
Let $\xi=-u^2(dt+\theta)$ be the dual of $\partial_t$. Then by the definition of exterior derivative, we have 
$d\xi(v,w)=\langle\nabla_v\partial_t,w\rangle-\langle\nabla_w\partial_t,v\rangle$. Combining the quality above, we obtain
$d\xi(v,w)=2\langle\nabla_wv,\partial_t\rangle$. 
On the other hand, 
$d\xi=d[-u^2(dt+\theta)]=-u^2d\theta-2udu\wedge(dt+d\theta)=-u^2d\theta+2u^{-1}du\wedge\xi$.
Thus we can derive that,
\begin{equation}
\begin{split}
&2\langle\nabla_wv,\partial_t\rangle=d\xi(v,w)=-u^2d\theta(v,w)\\
&\xi([v,w])=\langle \nabla_vw-\nabla_wv,\partial_t\rangle=2\langle\nabla_vw,\partial_t\rangle=u^2d\theta(v,w).
\end{split}
\end{equation}
Next we give a proof for the formula (5.12):

Let $\alpha=dt+\theta=-u^{-2}\xi$, so $\alpha(\partial_t)=1,~\alpha(v)=0~\forall v\in TS$. Then according to the the following Lie-derivative formula for time-independent vector feilds $A,B,Y$ in the spacetime: 
\begin{equation*}
	\begin{split}
		L_{Y}\alpha^2(A,B)=Y[\alpha^2(A,B)]-\alpha^2([Y,A],B)-\alpha^2(A,[Y,B]),
	\end{split}
\end{equation*}
it is easy to see that 
\begin{equation*}
	\begin{cases}
		L_{ Y}\alpha^2(\partial t,\partial t)=0\\
		[L_{Y}\alpha^2]^T=0.
	\end{cases}
\end{equation*}
As for the mixed component of $L_Y\alpha^2$, we can carry out the following computation for $v\in TS$,
\begin{equation}
\begin{split}
L_Y\alpha^2(\partial_t,v)&=-\alpha^2([Y,v],\partial_t)\\
&=-\alpha([Y,v])\\
&=u^{-2}\xi([Y,v]).
\end{split}
\end{equation}
As discussed in \S5, any vector field $Y\in T^{m,\alpha}_{\delta}(V^{(4)})$ can be decomposed as,
\begin{equation}
Y=Y^T-\frac{Y^{\perp}}{u}\partial t,~\text{with}~Y^T\in TS \text{ and }Y^{\perp}=\frac{1}{u}\langle Y,\partial_t\rangle.
\end{equation}
Thus, for $v\in TS$, one has,
\begin{equation*}
\begin{split}
\xi[Y,v]&=\xi([Y^T,v])-\xi([\frac{Y^{\perp}}{u}\partial_t,v])=\xi([Y^T,v])+\xi[v(\frac{Y^{\perp}}{u})\partial_t]\\
&=u^2d\theta(Y^T,v)-u^2v(\frac{Y^{\perp}}{u}).
\end{split}
\end{equation*}
In the last equality above, we use the formula in (7.6) to compute $\xi([Y^T,v])$.
Plugging this to equation (7.7) we obtain
\begin{equation*}
[L_Y\alpha^2(\partial_t)]^T=d\theta(Y^T)-d(\frac{Y^{\perp}}{u})
\end{equation*}
This completes the proof of (5.12).

Using the same notation as above, we give a proof of the formula (5.13) as follows.

Based on the decomposition (7.8), we have
\begin{equation}
\begin{split}
2\delta^*_{g^{(4)}}Y=L_{Y^T}g^{(4)}-L_{\frac{Y^{\perp}}{u}\partial_t}g^{(4)}.
\end{split}
\end{equation}
In the following, we assume $v,w\in TS$. For the first term in (7.9), we have
\begin{equation*}
\begin{split}
&L_{Y^T}g^{(4)}(\partial_t,\partial_t)=2\langle\nabla_{\partial_t}Y^T,\partial_t\rangle=2\langle\nabla_{Y^T}\partial_t,\partial_t\rangle=-2uY^T(u),\\
&L_{Y^T}g^{(4)}(v,w)=\langle\nabla_vY^T,w\rangle+\langle\nabla_wY^T,v\rangle=L_{Y^T}g_S(v,w),\\
&L_{Y^T}g^{(4)}(\partial_t,v)=\langle\nabla_{\partial_t}Y^T,v\rangle+\langle\nabla_vY^T,\partial_t\rangle\\
&\quad\quad=\langle\nabla_{Y^T}\partial_t,v\rangle+\langle\nabla_vY^T,\partial_t\rangle=-\langle\nabla_{Y^T}v,\partial_t\rangle+\langle\nabla_vY^T,\partial_t\rangle\\
&\quad\quad=2\langle\nabla_vY^T,\partial_t\rangle=-u^2d\theta(Y^T,v).
\end{split}
\end{equation*}
In the last equality, we apply (7.6) to the term $2\langle\nabla_vY^T,\partial_t\rangle$.
Summing up the equations,
\begin{equation}
\begin{cases}
L_{Y^T}g^{(4)}(\partial_t,\partial_t)=-2uY^T(u)\\
[L_{Y^T}g^{(4)}(\partial_t)]^T=-u^2d\theta(Y^T)\\
[L_{Y^T}g^{(4)}]^T=L_{Y^T}g_S.
\end{cases}
\end{equation}
As for the second term on the right side of (7.9), basic calculation yields,
\begin{equation*}
\begin{split}
L_{\frac{Y^{\perp}}{u}\partial_t}g^{(4)}=\frac{Y^{\perp}}{u}L_{\partial_t}g^{(4)}+d(\frac{Y^{\perp}}{u})\odot\xi=d(\frac{Y^{\perp}}{u})\odot\xi.
\end{split}
\end{equation*}
Thus,
\begin{equation}
\begin{cases}
L_{\frac{Y^{\perp}}{u}\partial_t}g^{(4)}(\partial_t,\partial_t)=0\\
[L_{\frac{Y^{\perp}}{u}\partial_t}g^{(4)}(\partial_t)]^T=-u^2d(\frac{Y^{\perp}}{u})\\
[L_{\frac{Y^{\perp}}{u}\partial_t}g^{(4)}]^T=0.
\end{cases}
\end{equation}
Equations (7.10) and (7.11) together give (5.13).

At last we derive the decomposition (5.9) of the Bianchi gauge operator.

We assume $g^{(4)}$ is in addition vacuum, which is equivalent to the following system in the projection formalism, (cf.[G],[H1],[H2]),

\begin{equation}
\begin{cases}
Ric_{g_S}=\frac{1}{u}D^2_{g_S}u+2u^{-4}(\omega^2-|\omega|^2_{g_S}\cdot g_S)\\
\Delta_{g_S} u=2u^{-3}|\omega|^2_{g_S}\\
\delta_{g_S}\omega+3u^{-1}\langle du,\omega\rangle_{g_S}=0\\
d\omega=0
\end{cases},
\end{equation}
where $\omega$ is the twist tensor defined as,
$$\omega=-\frac{1}{2}u^{3}\star_{g_S}d\theta.$$
Here we use subscript $'$$'$$_{g_S}$$'$$'$ to denote geometric operators (connection and Laplacian) of the Riemannian metric $g_S$ on the quotient manifold $S$.
First observe that, from the last equation in (7.12), it follows that
\begin{equation*}
\begin{split}
0=d\omega=d(u^3\star_{g_S}d\theta)=d\star_{g_S} (u^3d\theta)=\delta_{g_S}(u^3d\theta)=u^3\delta_{g_S} d\theta-3u^2d\theta(\nabla u).
\end{split}
\end{equation*}
Thus, we obtain
\begin{equation}
\begin{split}
u\delta_{g_S} d\theta=3d\theta(\nabla u).
\end{split}
\end{equation}
Moreover, based on the second equation in (7.12), one easily obtains,
\begin{equation}
\begin{split}
\Delta_{g_S} u=\frac{1}{2}u^3|d\theta|_{g_S}^2.
\end{split}
\end{equation}

Now we analyze the operator $\beta_{g^{(4)}}\delta^*_{g^{(4)}}$ acting on a time-independent vector field $Y$, which is decomposed as in (7.8). To begin with, because the metric $g^{(4)}$ is vacuum, a standard Bochner-Weitzenbock formula gives,
\begin{equation*}
\begin{split}
2\beta_{g^{(4)}}\delta^*_{g^{(4)}}Y=\nabla^*\nabla Y-Ric_{g^{(4)}}(Y)=\nabla^*\nabla Y.
\end{split}
\end{equation*}
Based on the formula of the Laplace operator, we have,
\begin{equation}
\begin{split}
\nabla^*\nabla Y=\frac{1}{u^2}[\nabla_{\partial_t}\nabla_{\partial_t}Y-\nabla_{\nabla_{\partial_t}\partial_t}Y]-\Sigma_i[\nabla_{e_i}\nabla_{e_i}Y-\nabla_{\nabla_{e_i}e_i}Y],
\end{split}
\end{equation}
where $e_i~(i=1,2,3)$ are taken to be geodesic normal basis on $S$.
In the following, we compute the tensors on the right side of (7.15) term by term.
%%%%%%%%%%%%%%%%%%%%%%%%%%%
\\
\\1.We start with the first two terms in (7.15). Since $[Y,\partial_t]=0$, we have $\nabla_{\partial_t}Y=\nabla_Y\partial_t$. Thus, the first term in (7.15) gives
\begin{equation}
\begin{split}
\nabla_{\partial_t}\nabla_{\partial_t}Y&=\nabla_{\partial_t}\nabla_Y\partial_t=\nabla_{\nabla_Y\partial_t}\partial_t=\nabla_{\nabla_{Y^T}\partial_t}\partial_t-\frac{Y^{\perp}}{u}\nabla_{\nabla_{\partial_t}\partial_t}\partial_t\\
&=\nabla_{\nabla_{Y^T}\partial_t}\partial_t-\frac{Y^{\perp}}{u}\nabla_{u\nabla u}\partial_t.
\end{split}
\end{equation}
In the above, we use the decomposition (7.8) and the fact that $\nabla_{\partial_t}\partial_t=u\nabla u$. In the same way, the second term in (7.15) gives,
\begin{equation}
\begin{split}
&\nabla_{\nabla_{\partial_t}\partial_t}Y=\nabla_{u\nabla u}Y=\nabla_{u\nabla u}Y^T-\nabla_{u\nabla u}(\frac{Y^{\perp}}{u}\partial_t)\\
&=\nabla_{u\nabla u}Y^T-\langle u\nabla u,\nabla \frac{Y^{\perp}}{u}\rangle\cdot\partial_t-\frac{Y^{\perp}}{u}\nabla_{u\nabla u}\partial_t
\end{split}
\end{equation}
Subtract (7.17) from (7.16). We get
\begin{equation}
\begin{split}
\nabla_{\partial_t}\nabla_{\partial_t}Y-\nabla_{\nabla_{\partial_t}\partial_t}Y
&=\nabla_{\nabla_{Y^T}\partial_t}\partial_t-u\nabla_{\nabla u}Y^T+\langle u\nabla u,\nabla \frac{Y^{\perp}}{u}\rangle\cdot\partial_t
\end{split}
\end{equation}
Based on (7.6) and (7.8), 
\begin{equation}\nabla_v\partial_t=(\nabla_v\partial_t)^T-u^{-2}\langle\nabla_v\partial_t,\partial_t\rangle\cdot\partial_t\\
=-\frac{1}{2}u^2d\theta(v)+u^{-1}v(u)\cdot\partial_t\quad\forall v\in TS.\end{equation}
Thus the first term on the right side of (7.18) can be written as,
\begin{equation}
\begin{split}
&\nabla_{\nabla_{Y^T}\partial_t}\partial_t=\nabla_{-\frac{1}{2}u^2d\theta(Y^T)+u^{-1}Y^T(u)\cdot\partial_t}\partial_t\\
&=\nabla_{-\frac{1}{2}u^2d\theta(Y^T)}\partial_t+u^{-1}Y^T(u)\nabla_{\partial_t}\partial_t\\
&=\frac{1}{4}u^4d\theta(d\theta(Y^T))-\frac{1}{2}ud\theta(Y^T,\nabla u)\cdot\partial_t+Y^T(u)\nabla u.
\end{split}
\end{equation}
For two vector fields $v,w\in TS$, we have 
\begin{equation}
\nabla_vw=[\nabla_vw]^T+\langle\nabla_vw,\partial_t\rangle\cdot\frac{\partial_t}{-u^2}=(\nabla_{g_S})_vw+\frac{1}{2}d\theta(w,v)\cdot\partial_t.
\end{equation}
Applying the decomposition above to the second term on the right side of (7.18) yields,
\begin{equation}
\begin{split}
u\nabla_{\nabla u}Y^T=u(\nabla_{g_S})_{\nabla u}Y^T-\frac{1}{2}ud\theta(\nabla u,Y^T)\cdot\partial_t
\end{split}
\end{equation}
Plug (7.20),(7.22) into (7.18),
\begin{equation}
\begin{split}
&\nabla_{\partial_t}\nabla_{\partial_t}Y-\nabla_{\nabla_{\partial_t}\partial_t}Y\\
&=
\frac{1}{4}u^4d\theta(d\theta(Y^T))-\frac{1}{2}ud\theta(Y^T,\nabla u)\cdot\partial_t+Y^T(u)\nabla u\\
&\quad\quad-u(\nabla_{g_S})_{\nabla u}Y^T+\frac{1}{2}ud\theta(\nabla u,Y^T)\cdot\partial_t
+\langle u\nabla u,\nabla \frac{Y^{\perp}}{u}\rangle\cdot\partial_t\\
&=\frac{1}{4}u^4d\theta(d\theta(Y^T))+Y^T(u)\nabla u-u(\nabla_{g_S})_{\nabla u}Y^T
+\langle u\nabla u,\nabla \frac{Y^{\perp}}{u}\rangle\cdot\partial_t
\end{split}
\end{equation}
%%%%%%%%%%%%%%%%%%%%%%%%%%%%%%%%%%%%%%%%%%%%%%%%%%%%%%%%%%%%%%%
%%%%%%%%%%%%%%%%%%%%%%%%%%%%%%%%%%%%%%%%%%%%%%%%%%%%%%%%%%%%%%%
%%%%%%%%%%%%%%%%%%%%%%%%%%%%%%%%%%%%%%%%%%%%%%%%%%%%%%%%%%%%%%%
$~~~$\\
2.As for the third term in (7.15), we first use the decomposition (7.8) to get
\begin{equation}
\begin{split}
\nabla_{e_i}\nabla_{e_i}Y=\nabla_{e_i}\nabla_{e_i}Y^T-\nabla_{e_i}\nabla_{e_i}(\frac{Y^{\perp}}{u}\partial_t).
\end{split}
\end{equation}
Apply formula (7.21) to the first term on the right side above,
\begin{equation}
\begin{split}
&\nabla_{e_i}\nabla_{e_i}Y^T\\
&=\nabla_{e_i}[(\nabla_{g_S})_{e_i}Y^T+\frac{1}{2}d\theta(Y^T,e_i)\cdot\partial_t]\\
&=(\nabla_{g_S})_{e_i}(\nabla_{g_S})_{e_i}Y^T+\frac{1}{2}d\theta((\nabla_{g_S})_{e_i}Y^T,e_i)\cdot\partial_t+[\nabla_{e_i}\frac{1}{2}d\theta(Y^T,e_i)]\cdot\partial_t\\
&\quad\quad+\frac{1}{2}d\theta(Y^T,e_i)\cdot\nabla_{e_i}\partial_t\\
&=(\nabla_{g_S})_{e_i}(\nabla_{g_S})_{e_i}Y^T+\frac{1}{2}d\theta((\nabla_{g_S})_{e_i}Y^T,e_i)\cdot\partial_t+[\nabla_{e_i}\frac{1}{2}d\theta(Y^T,e_i)]\cdot\partial_t\\
&\quad\quad+\frac{1}{2}d\theta(Y^T,e_i)\cdot(-\frac{1}{2}u^2d\theta(e_i)+u^{-1}e_i(u)\cdot\partial_t)\\
&=(\nabla_{g_S})_{e_i}(\nabla_{g_S})_{e_i}Y^T-\frac{1}{4}u^2d\theta(Y^T,e_i)\cdot d\theta(e_i)\\
&\quad\quad+[\frac{1}{2}d\theta((\nabla_{g_S})_{e_i}Y^T,e_i)+\frac{1}{2}u^{-1}d\theta(Y^T, e_i)e_i(u)+\frac{1}{2}\nabla_{e_i}d\theta(Y^T,e_i)]\cdot\partial_t.
\end{split}
\end{equation}
Here in the third equality, we use the formula (7.19). As for the second term in (7.24), we first write it as,
\begin{equation}
\begin{split}
\nabla_{e_i}\nabla_{e_i}(\frac{Y^{\perp}}{u}\partial_t)&=\nabla_{e_i}[e_i(\frac{Y^{\perp}}{u})\partial_t+\frac{Y^{\perp}}{u}\nabla_{e_i}\partial_t]\\
&=e_i(e_i(\frac{Y^{\perp}}{u}))\partial_t+2e_i(\frac{Y^{\perp}}{u})\nabla_{e_i}\partial_t+\frac{Y^{\perp}}{u}\nabla_{e_i}\nabla_{e_i}\partial_t.
\end{split}
\end{equation}
Apply formula (7.19) to the second term above, 
\begin{equation}
\begin{split}
2e_i(\frac{Y^{\perp}}{u})\nabla_{e_i}\partial_t=2e_i(\frac{Y^{\perp}}{u})[-\frac{1}{2}u^2d\theta(e_i)+u^{-1}e_i(u)\cdot\partial_t].
\end{split}
\end{equation}
Apply formula (7.19) twice to the third term in (7.26) gives
\begin{equation}
\begin{split}
&\frac{Y^{\perp}}{u}\nabla_{e_i}\nabla_{e_i}\partial_t\\
&=\frac{Y^{\perp}}{u}\nabla_{e_i}[-\frac{1}{2}u^2d\theta(e_i)+u^{-1}e_i(u)\cdot\partial_t]\\
&=\frac{Y^{\perp}}{u}(\nabla_{g_S})_{e_i}[-\frac{1}{2}u^2d\theta(e_i)]-\frac{1}{4}uY^{\perp}d\theta(d\theta(e_i),e_i)\cdot\partial_t.\\
&\quad\quad+\frac{Y^{\perp}}{u}{e_i}(u^{-1}e_i(u))\cdot\partial_t+\frac{Y^{\perp}}{u^2}e_i(u)(-\frac{1}{2}u^2d\theta(e_i)+u^{-1}e_i(u)\cdot\partial_t)\\
&=\frac{Y^{\perp}}{u}(\nabla_{g_S})_{e_i}[-\frac{1}{2}u^2d\theta(e_i)]-\frac{1}{2}Y^{\perp}e_i(u)d\theta(e_i)\\
&\quad\quad+[\frac{1}{4}uY^{\perp}d\theta(e_i,d\theta(e_i))+\frac{Y^{\perp}}{u^2}{e_i}(e_i(u))]\cdot\partial_t.
\end{split}
\end{equation}
Summarizing equations (7.24-28) gives,
\begin{equation*}
\begin{split}
&\nabla_{e_i}\nabla_{e_i}Y\\
&=(\nabla_{g_S})_{e_i}(\nabla_{g_S})_{e_i}Y^T-\frac{1}{4}u^2d\theta(Y^T,e_i)\cdot d\theta(e_i)\\
&\quad\quad+[\frac{1}{2}d\theta((\nabla_{g_S})_{e_i}Y^T,e_i)+\frac{1}{2}u^{-1}d\theta(Y^T, e_i)e_i(u)+\frac{1}{2}\nabla_{e_i}d\theta(Y^T,e_i)]\cdot\partial_t\\
&\quad\quad-e_i(e_i(\frac{Y^{\perp}}{u}))\partial_t-2e_i(\frac{Y^{\perp}}{u})[-\frac{1}{2}u^2d\theta(e_i)+u^{-1}e_i(u)\cdot\partial_t]\\
&\quad\quad-\frac{Y^{\perp}}{u}(\nabla_{g_S})_{e_i}[-\frac{1}{2}u^2d\theta(e_i)]+\frac{1}{2}Y^{\perp}e_i(u)d\theta(e_i)\\
&\quad\quad-[\frac{1}{4}uY^{\perp}d\theta(e_i,d\theta(e_i))+\frac{Y^{\perp}}{u^2}{e_i}(e_i(u))]\cdot\partial_t\\
&=(\nabla_{g_S})_{e_i}(\nabla_{g_S})_{e_i}Y^T-\frac{1}{4}u^2d\theta(Y^T,e_i)\cdot d\theta(e_i)+e_i(\frac{Y^{\perp}}{u})u^2d\theta(e_i)\\
&\quad\quad-\frac{Y^{\perp}}{u}(\nabla_{g_S})_{e_i}[-\frac{1}{2}u^2d\theta(e_i)]+\frac{1}{2}Y^{\perp}e_i(u)d\theta(e_i)\\
&\quad\quad+[\frac{1}{2}d\theta((\nabla_{g_S})_{e_i}Y^T,e_i)+\frac{1}{2}u^{-1}d\theta(Y^T, e_i)e_i(u)+\frac{1}{2}\nabla_{e_i}d\theta(Y^T,e_i)]\cdot\partial_t\\
&\quad\quad-[e_i(e_i(\frac{Y^{\perp}}{u}))+2u^{-1}e_i(\frac{Y^{\perp}}{u})e_i(u)+\frac{1}{4}uY^{\perp}d\theta(e_i,d\theta(e_i))+\frac{Y^{\perp}}{u^2}{e_i}(e_i(u))]\cdot\partial_t
\end{split}
\end{equation*}
Take negative trace of the expression above,
\begin{equation}
\begin{split}
&-\Sigma_i\nabla_{e_i}\nabla_{e_i}Y\\
&=(\nabla_{g_S})^*\nabla_{g_S}Y^T+\frac{1}{4}u^2d\theta(d\theta(Y^T))-u^2d\theta(\nabla\frac{Y^{\perp}}{u})\\
&\quad\quad+\frac{Y^{\perp}}{2u}\delta_{g_S}[u^2d\theta]-\frac{1}{2}Y^{\perp}d\theta(\nabla u)\\
&\quad\quad+[\langle \frac{1}{2}d\theta,\nabla_{g_S}Y^T\rangle+\frac{1}{2}\delta_{g_S}(d\theta(Y^T))-\Delta_{g_S}(\frac{Y^{\perp}}{u})+\frac{1}{4}uY^{\perp}|d\theta|^2]\cdot\partial_t\\
&\quad\quad+[-\frac{1}{2}u^{-1}d\theta(Y^T,\nabla u)+2u^{-1}\langle\nabla\frac{Y^{\perp}}{u},\nabla u\rangle-\frac{Y^{\perp}}{u^2}\Delta_{g_S} u]\cdot\partial_t.
\end{split}
\end{equation}
Notice that  in the third line of (7.29), $\delta_{g_S}[u^2d\theta]=u^2\delta_{g_S}d\theta-2ud\theta(\nabla u)$. In the fourth line of (7.29), $\delta_{g_S}(d\theta(Y^T))=-\delta_{g_S}d\theta(Y^T)+\langle d\theta,\nabla_{g_S}Y^T\rangle$. Thus (7.29) can be rewritten as,
\begin{equation}
\begin{split}
&-\Sigma_i\nabla_{e_i}\nabla_{e_i}Y\\
&=(\nabla_{g_S})^*\nabla_{g_S}Y^T+\frac{1}{4}u^2d\theta(d\theta(Y^T))-u^2d\theta(\nabla\frac{Y^{\perp}}{u})\\
&\quad\quad+\frac{1}{2}Y^{\perp}u\delta_{g_S}[d\theta]-\frac{3}{2}Y^{\perp}d\theta(\nabla u)\\
&\quad\quad+[\langle d\theta,\nabla_{g_S}Y^T\rangle-\frac{1}{2}\delta_{g_S}d\theta(Y^T)-\Delta_{g_S}(\frac{Y^{\perp}}{u})+\frac{1}{4}uY^{\perp}|d\theta|^2]\cdot\partial_t\\
&\quad\quad+[-\frac{1}{2}u^{-1}d\theta(Y^T,\nabla u)+2u^{-1}\langle\nabla\frac{Y^{\perp}}{u},\nabla u\rangle-\frac{Y^{\perp}}{u^2}\Delta_{g_S} u]\cdot\partial_t.
\end{split}
\end{equation}
%%%%%%%%%%%%%%%%%%%%%%%%%%%%%
\\
4.The last term in (7.15) is zero because $\nabla_{e_i}e_i=0$ based on formula (7.21).
\\$~~$

Adding up the equations (7.23) and (7.30), we have
\begin{equation*}
\begin{cases}
[\nabla^*\nabla Y]^T=(\nabla_{g_S})^*\nabla_{g_S}Y^T+u^{-2}Y^T(u)\nabla u
-u^{-1}(\nabla_{g_S})_{\nabla u}Y^T\\
\quad\quad\quad\quad\quad\quad\quad\quad\quad\quad\quad\quad+\frac{1}{2}u^2d\theta(d\theta(Y^T))-u^2d\theta(\nabla\frac{Y^{\perp}}{u})\\
\quad\quad\quad\quad\quad\quad\quad\quad\quad\quad\quad\quad+\frac{1}{2}Y^{\perp}u\delta_{g_S}d\theta-\frac{3}{2}Y^{\perp}d\theta(\nabla u)\\
~~~\\
\langle \nabla^*\nabla Y,u^{-2}\partial_t\rangle=\Delta_{g_S}(\frac{Y^{\perp}}{u})-3u^{-1}\langle\nabla\frac{Y^{\perp}}{u},\nabla u\rangle-\langle d\theta,\nabla_{g_S}Y^T\rangle\\
\quad\quad\quad\quad\quad\quad\quad\quad\quad\quad-\frac{1}{4}uY^{\perp}|d\theta|^2+\frac{Y^{\perp}}{u^2}\Delta_{g_S} u\\
\quad\quad\quad\quad\quad\quad\quad\quad\quad\quad-\frac{1}{2}u^{-1}d\theta(\nabla u,Y^T)+\frac{1}{2}\delta_{g_S}d\theta(Y^T).
\end{cases}
\end{equation*}
According to equations (7.13) and (7.14), the equations above can be simplified as,
\begin{equation}
\begin{cases}
[\nabla^*\nabla Y]^T=(\nabla_{g_S})^*\nabla_{g_S}Y^T+u^{-2}Y^T(u)\nabla u
-u^{-1}(\nabla_{g_S})_{\nabla u}Y^T\\
\quad\quad\quad\quad\quad\quad\quad\quad\quad\quad\quad\quad\quad\quad\quad+\frac{1}{2}u^2d\theta(d\theta(Y^T))-u^2d\theta(\nabla\frac{Y^{\perp}}{u})\\
~~~\\
\langle \nabla^*\nabla Y,u^{-2}\partial_t\rangle=\Delta_{g_S}(\frac{Y^{\perp}}{u})-3u^{-1}\langle\nabla\frac{Y^{\perp}}{u},\nabla u\rangle+\frac{1}{4}uY^{\perp}|d\theta|^2\\
\quad\quad\quad\quad\quad\quad\quad\quad\quad\quad\quad\quad\quad\quad\quad-\langle d\theta,\nabla_{g_S}Y^T\rangle+u^{-1}d\theta(\nabla u,Y^T),
\end{cases}
\end{equation}
which is the formula (5.9).

We note that in the case where $\tilde g^{(4)}=\tilde g^{(4)}_0$,  the standard flat (Minkowski) metric on $\mathbb R\times (\mathbb R^3\setminus B)$. Because $\theta=0,~u=1$ for $\tilde g^{(4)}_0$, equations in (7.31) can be simplified as 
\begin{equation*}
\begin{cases}
[\nabla^*\nabla Y]^T=(\nabla_{g_0})^*\nabla_{g_0}Y^T\\
[\nabla^*\nabla Y]^{\perp}=\Delta_{g_0}Y^{\perp}.
\end{cases}
\end{equation*}
Here $g_0$ denotes the flat metric in $\mathbb R^3\setminus B$. Based on the decomposition above, it is easy to see that the solution to $\nabla^*\nabla Y=0$ with trivial Dirichlet boundary condition must be $Y=0$. Therefore, the operator $\beta_{\tilde g_0^{(4)}}\delta^*_{\tilde g_0^{(4)}}$ is invertible, i.e. the Assumption 3.1 holds for $\tilde g_0^{(4)}$.
%%%%%%%%%%%%%%%%%%%%%%%%%%%%%%%%%%%%%%%%%%%%%%%%%%%%%%%%%%%%%%%%
\subsection{Perturbation of the operator $\beta_{\tilde g^{(4)}}\delta^*_{\tilde g^{(4)}}$}$~~$

Here we show that in the beginning of the proof of Proposition 5.3, if it is assumed that the system (5.6) admits a nontrivial solution  for all $\epsilon\in I$, then there exists a smooth curve $Y(\epsilon)$ solving it.

In the following discussion, we work with the weighted Sobolev spaces. Since a vector $Y$ solving BVP (5.6) must be $C^{\infty}$ smooth by elliptic regularity, the Banach space we choose does not affect the final conclusion. 
Let $M$ and $V^{(4)}$ be the same as in \S7.1. For fixed $p,\delta$, the weighted Sobolev spaces are defined as,
\begin{equation*}
\begin{split}
&L_{\delta}^p(M)=\{\text{functions u on }M:~||u||_{p,\delta}=(\int_{M}|u|^pr^{\delta p-n}dx)^{1/p}<\infty \},\\
&W_{\delta}^{k,p}(M)=\{\text{functions $u$ on }M:~\Sigma_{i=0}^k||D^iu||_{p,\delta+i}<\infty \},\\
&W^{k,p}(TV^{(4)})=\{\text{vector fields }Y\text{ in }V^{(4)}:~L_{\partial_t}Y=0, Y^{\gamma}\in W^{k,p}_{\delta}(M), \gamma=0,1,2,3\}.
\end{split}
\end{equation*}
Let $W$ be the space of vector fields that vanish on the boundary:
\begin{equation*}
\begin{split}
W=\{Y\in W^{2,2}_{\delta}(TV^{(4)}): Y=0 \text{ on }\partial M\}.
\end{split}
\end{equation*}

The operator $\beta_{\tilde g^{(4)}}\delta^*_{g_{\epsilon}^{(4)}}$ give rise to a family of map $T_{\epsilon}$ defined as,
\begin{equation*}
\begin{split}
&T_{\epsilon}:W\to L^2_{\delta}(TV^{4})\\
&T_{\epsilon}(Y)=r^{2}\beta_{\tilde g^{(4)}}\delta^*_{g_{\epsilon}^{(4)}}(Y)
\end{split}
\end{equation*}
It is obvious that $T_{\epsilon}$ is an analytic curve of linear operators parametrized by $\epsilon$. It has been proved in \S5 that $\beta_{\tilde g^{(4)}}\delta^*_{g_{\epsilon}^{(4)}}$ is formally self-adjoint, thus so is $T_{\epsilon}$. Moreover, by standard theory of elliptic operators on non-compact manifold (cf.[MV],[Le]), $T_{\epsilon}$ has compact resolvent.  According to [K] (Chapter 7, Theorem 3.9), for an analytic curve $T_{\epsilon}$ of self-adjoint operators that have compact resolvent, all (repeated) eigenvalues can be represented by analytic functions $u_n(\epsilon)$ and there is a sequence of analytic vector-valued functions $Y_n(\epsilon)$ representing the eigenvectors to $u_n(\epsilon)$. 

If, as assumed in the proof of Proposition 5.3, there is an interval $I$ such that for all $\epsilon\in I$ the system (5.6) admits a nonzero solution, then 0 is an eigenvalue of $T_{\epsilon}$ for all $\epsilon\in I$. Based on the analysis above, for each $\epsilon_0\in I$ there must be a function $u_n$ such that $u_n(\epsilon_0)=0$. However, there are only countably many eigenvalues $u_n$. Thus, among the eigenfunctions $u_n(\epsilon)$, there must be some $u_{n_0}$ such that $u_{n_0}(\epsilon)= 0$ for uncountably $\epsilon$. Since $u_{n_0}(\epsilon)$ is analytic in $\epsilon$, $u_{n_0}\equiv 0$ for $\epsilon\in I$. Correspondingly, $Y_{n_0}(\epsilon)$ is a smooth curve of 0-eigenvectors for $T_{\epsilon}$ ($\epsilon\in I$). This directly implies that there is a smooth curve $Y(\epsilon)$ solving the system (5.6).
%%%%%%%%%%%%%%%%%%%%%%%%%%%%%%%%%%%%%%%%%%%%%%%%%%%%%%%%%%%%%%%%
\subsection{Bianchi operator in the Minkowski spacetime}$~~$

We give the proof of equation (6.6). Recall that the spacetime is $(V^{(4)}=\mathbb R\times(\mathbb R^3\setminus B),\tilde g_0^{(4)})$, where $\tilde g_0^{(4)}$ is the standard Minkowski metric. The metric is varied along the infinitesimal deformation $h^{(4)}$ such that 
\begin{equation}
\beta_{\tilde g_0^{(4)}}h^{(4)}=0.
\end{equation}
Under the standard coordinate $\{t,x^i\}$ of the flat spacetime $(V^{(4)},\tilde g_0^{(4)})$, the Killing vector field $\partial_t$ is of unit norm and it is perpendicular to the hypersurface $M=\{t=0\}$. On the hypersurface, ${\partial_{x^i}}~ (i=1,2,3)$ is a orthonormal basis of the tangent bundle. Let $\tilde{\nabla}$ denote the Levi-Civita connection of the flat metric. Then $\tilde{\nabla}_{\partial_t}\partial_t=0,~\tilde{\nabla}_{\partial_t}\partial_{x^i}=0$ and $\tilde{\nabla}_{\partial_{x^i}}\partial_{x^j}=0$. As in \S6, we use $g_0$ to denote the induced (flat) metric on $M$. 

While the infinitesimal variation of the spacetime metric is $h^{(4)}$, the shift vector is deformed by the vector field $Y\in TM$ such that $\langle Y,\partial_{x^i}\rangle_{g_0}=h^{(4)}(\partial_t,\partial_{x^i})$. Pairing (7.32) with $\partial_t$, we obtain,
\begin{equation*}
\begin{split}
0=\beta_{\tilde g_0^{(4)}}h^{(4)}(\partial_t)&=[\delta_{\tilde g_0^{(4)}}h^{(4)}+\frac{1}{2}dtrh^{(4)}](\partial_t)=\delta_{\tilde g_0^{(4)}}h^{(4)}(\partial_t)\\
&=\tilde{\nabla}_{\partial_t}h^{(4)}(\partial_t,\partial_t)-\Sigma_i\tilde{\nabla}_{\partial_{x^i}}h^{(4)}(\partial_{x^i},\partial_t)\\
&={\partial_t}\big(h^{(4)}(\partial_t,\partial_t)\big)-\Sigma_i{\partial_{x^i}}\big(h^{(4)}(\partial_{x^i},\partial_t)\big)\\
&=-\Sigma_i{\partial_{x^i}}\big(h^{(4)}(\partial_{x^i},\partial_t)\big)\\
&=\delta_{g_0} Y.
\end{split}
\end{equation*}
This gives the first equation in (6.6). In the calculation above, the second equality uses the fact that $h^{(4)}$ is time-independent. The third and last equality are based on that the metric $\tilde g_0^{(4)}$ and $g_0$ are flat. 

Under the deformation $h^{(4)}$, the induced metric on $M$ is deformed by $h$ which is the restriction of $h^{(4)}$ on $M$. The lapse function is deformed by $v$ so that $h^{(4)}(\partial_t,\partial_t)=-2v$. Pair (7.32) with $\partial_{x^i}$. Similar calculation as above gives,
\begin{equation*}
\begin{split}
0=\beta_{\tilde g_0^{(4)}}h^{(4)}(\partial_{x^i})&=[\delta_{\tilde g_0^{(4)}}+\frac{1}{2}dtrh^{(4)}](\partial_{x^i})\\
&=\delta_{\tilde g_0^{(4)}}h^{(4)}(\partial_{x^i})+\frac{1}{2}\partial_{x^i}(trh^{(4)})\\
&=\tilde{\nabla}_{\partial_t}h^{(4)}(\partial_t,\partial_{x^i})-\Sigma_k\tilde{\nabla}_{\partial_{x^k}}h^{(4)}(\partial_{x^k},\partial_{xi})+\frac{1}{2}\partial_{x^i}(trh^{(4)})\\
&={\partial_t}\big(h^{(4)}(\partial_t,\partial_{x^i})\big)-\Sigma_k{\partial_{x^k}}\big(h^{(4)}(\partial_{x^k},\partial_{x^i})\big)+\frac{1}{2}\partial_{x^i}(trh^{(4)})\\
&=-\Sigma_k{\partial_{x^k}}\big(h^{(4)}(\partial_{x^k},\partial_{x^i})\big)+\frac{1}{2}\partial_{x^i}(tr_{g_0}h^{(4)}-h^{(4)}(\partial_t,\partial_t))\\
&=\delta_{g_0} h(\partial_{x^i})+\frac{1}{2}d(tr_{g_0}h+2v)
\end{split}
\end{equation*}
This gives the second equation in (6.6). 
\begin{equation*}
\begin{split}
\end{split}
\end{equation*}
%%%%%%%%%%%%%%%%%%%%%%%%%%%%%%%
%%%%%%%%%%%%%%%%%%%%%%%%%%%%%%%%%%%%%%%%%%%%%%%%%%%%%%%%%%%%%%%%%%%%%%%%%%%%%%%%%%%%%%%%%%%%%%%%%%%%%%%%%%%%%%%%%%%%%%%%%%%%%%%%%%%%%%%%%%%%%%%%%%%%%%%%%%%%%%%%%%
%%%%%%%%%%%%%%%%%%%%%%%%%%%%%%%%%%%%%%%%%%%%%%%%%%%%%%%%%%%%%%%%%%%%%%%%%%%%%%%%%%%%%%%%%%%%%%%%%
\bibliographystyle{amsplain}

\end{document}